\newtheorem{thm}{Theorem}[section]
\newtheorem{prob}[thm]{Problem}
\newtheorem{ques}[thm]{Question}
\newtheorem{lem}[thm]{Lemma}
\newtheorem{obs}[thm]{Observation}
\newtheorem{cla}{Claim}[section]
\theoremstyle{definition}
\newtheorem{defi}[thm]{Definition}
\newcommand{\sub}{\text{sub}}
\newcommand{\trif}{
   \begin{tikzpicture}
    \begin{scope}[every node/.style={circle, fill, draw,inner sep = 1.5pt}]
        \path (0,0) node(x)  {} 
          (1.5,0) node(y) {}
          (0.75,1) node(z) {};
    \end{scope}
    \begin{scope}[every node/.style={node font = \footnotesize}]
    \draw (node cs:name=x) -- (node cs:name=y) node[midway,below = 2pt] {1};
    \draw (node cs:name=y) --  (node cs:name=z)node[near start,above = 2pt] {1};
    \draw (node cs:name=z) --  (node cs:name=x)node[near end,above = 2pt] {1};
     \end{scope}
   \end{tikzpicture}
}
\newcommand{\tris}{
    \begin{tikzpicture}
        \begin{scope}[every node/.style={circle, fill, draw,inner sep = 1.5pt}]
            \path (0,0) node(x)  {} 
              (1.5,0) node(y) {}
              (0.75,1) node(z) {};
        \end{scope}
        \begin{scope}[every node/.style={node font = \footnotesize}]
        \draw (node cs:name=x) -- (node cs:name=y) node[midway,below = 2pt] {0};
        \draw (node cs:name=y) --  (node cs:name=z)node[near start,above = 2pt] {1};
        \draw (node cs:name=z) --  (node cs:name=x)node[near end,above = 2pt] {1};
         \end{scope}
       \end{tikzpicture}}
\newcommand{\trit}{
    \begin{tikzpicture}
        \begin{scope}[every node/.style={circle, fill, draw,inner sep = 1.5pt}]
            \path (0,0) node(x)  {} 
              (1.5,0) node(y) {}
              (0.75,1) node(z) {};
        \end{scope}
        \begin{scope}[every node/.style={node font = \footnotesize}]
        \draw (node cs:name=x) -- (node cs:name=y) node[midway,below = 2pt] {0};
        \draw (node cs:name=y) --  (node cs:name=z)node[near start,above = 2pt] {0};
        \draw (node cs:name=z) --  (node cs:name=x)node[near end,above = 2pt] {1};
         \end{scope}
       \end{tikzpicture}
}
\newcommand{\trifo}{\begin{tikzpicture}
    \begin{scope}[every node/.style={circle, fill, draw,inner sep = 1.5pt}]
        \path (0,0) node(x)  {} 
          (1.5,0) node(y) {}
          (0.75,1) node(z) {};
    \end{scope}
    \begin{scope}[every node/.style={node font = \footnotesize}]
    \draw (node cs:name=x) -- (node cs:name=y) node[midway,below = 2pt] {0};
    \draw (node cs:name=y) --  (node cs:name=z)node[near start,above = 2pt] {0};
    \draw (node cs:name=z) --  (node cs:name=x)node[near end,above = 2pt] {0};
     \end{scope}
   \end{tikzpicture}}
\newcommand\intertri[6]{
   \begin{tikzpicture}
    \begin{scope}[every node/.style={circle, fill, draw,inner sep = 1.5pt}]
        \path (0,0) node(a)  {} 
          (0,1) node(b) {}
          (0,2) node(c) {}
          (0,3) node(d) {}
          (2.5,1.5) node(e) {};
    \end{scope}
    \begin{scope}[every node/.style={node font = \footnotesize}]
    \draw (node cs:name=a) -- (node cs:name=b) node[midway, left = 2pt] {#1};
    \draw (node cs:name=c) --  (node cs:name=d)node[midway,left = 2pt] {#2};
    \draw (node cs:name=e) --  (node cs:name=a)node[midway,below] {#3};
    \draw (node cs:name=e) -- (node cs:name=b) node[midway,above] {#4};
    \draw (node cs:name=e) --  (node cs:name=c)node[midway,above,near end] {#5};
    \draw (node cs:name=e) --  (node cs:name=d)node[midway,above] {#6};
     \end{scope}
   \end{tikzpicture}
}
\newcommand\twotrif{
   \begin{tikzpicture}
    \begin{scope}[every node/.style={circle, fill, draw,inner sep = 1.5pt}]
        \path (0,0) node(a)  {} 
          (2,0) node(b) {}
          (1,1.5) node(c) {}
          (3,0) node(d) {}
          (5,0) node(e) {}
          (4,1.5) node(f) {};
    \end{scope}
    \begin{scope}[every node/.style={node font = \footnotesize}]
    \draw (node cs:name=a) node[below left] {$u_2$}-- (node cs:name=b)node[below = 1pt] {$u_3$} -- (node cs:name=c);
    \draw (node cs:name=c) node[above = 1pt] {$u_1$} -- (node cs:name=a);
    \draw (node cs:name=d) node[below = 1pt] {$u_5$} --  (node cs:name=e)node[below right] {$u_6$} -- (node cs:name=f)node[above = 1pt] {$u_4$};
    \draw (node cs:name=f) -- (node cs:name=d);
     \end{scope}
   \end{tikzpicture}
}
\newcommand\twotris{
   \begin{tikzpicture}
    \begin{scope}[every node/.style={circle, fill, draw,inner sep = 1.5pt}]
        \path (0,0) node(a)  {} 
          (2,0) node(b) {}
          (1,1.5) node(c) {}
          (3,0) node(d) {}
          (5,0) node(e) {}
          (4,1.5) node(f) {};
    \end{scope}
    \begin{scope}[every node/.style={node font = \footnotesize}]
    \draw (node cs:name=a) node[below left] {$u_2$} -- (node cs:name=b)node[below = 1pt] {$u_3$} -- (node cs:name=c);
    \draw (node cs:name=c) node[above = 1pt] {$u_1$} -- (node cs:name=a);
    \draw (node cs:name=d) node[below = 1pt] {$u_5$}-- (node cs:name=c);
    \draw (node cs:name=e) node[below right] {$u_6$} -- (node cs:name=c);
    \draw (node cs:name=d) --  (node cs:name=e) -- (node cs:name=f) node[above = 1pt] {$u_4$};
    \draw (node cs:name=f) -- (node cs:name=d);
    \draw (node cs:name=c) -- (node cs:name=f);
     \end{scope}
   \end{tikzpicture}
}
\newcommand\twotrit{
   \begin{tikzpicture}
    \begin{scope}[every node/.style={circle, fill, draw,inner sep = 1.5pt}]
        \path (0,0) node(a)  {} 
          (2,0) node(b) {}
          (1,1.5) node(c) {}
          (3,0) node(d) {}
          (5,0) node(e) {}
          (4,1.5) node(f) {};
    \end{scope}
    \begin{scope}[every node/.style={node font = \footnotesize}]
    \draw (node cs:name=a) node[below left] {$u_2$} -- (node cs:name=b)node[below = 1pt] {$u_3$} -- (node cs:name=c)node[above = 1pt] {$u_1$};
    \draw (node cs:name=c) -- (node cs:name=a);
    \draw (node cs:name=d) node[below = 1pt] {$u_5$} -- (node cs:name=c);
    \draw (node cs:name=e) node[below right] {$u_6$} -- (node cs:name=c);
    \draw (node cs:name=d) --  (node cs:name=e) -- (node cs:name=f);
    \draw (node cs:name=f) node[above = 1pt] {$u_4$}-- (node cs:name=d);
    \draw (node cs:name=f) -- (node cs:name=a);
    \draw (node cs:name=f) -- (node cs:name=b);
    \draw (node cs:name=c) -- (node cs:name=f);
     \end{scope}
   \end{tikzpicture}
}
\title{Divisible subdivisions of graphs in subdivisions of complete graphs} 
\author{Xinmin Hou$^{a,b}$\footnote{Email: xmhou@ustc.edu.cn (X. Hou), wangxiangyang@mail.ustc.edu.cn (X. Wang)},\quad Xiangyang Wang$^a$\\
\small $^{a}$ School of Mathematical Sciences\\
\small University of Science and Technology of China, Hefei, Anhui 230026, China.\\
\small$^b$ Hefei National Laboratory\\
\small University of Science and Technology of China, Hefei 230088, Anhui, China
}
\begin{document}

\maketitle

\begin{abstract}
Let $\mathbb{Z}_q$ denote the cyclic group of order $q$. A $\mathbb{Z}_q$-edge-weighted  $K_f$ is the complete graph $K_f$ equipped with  a weight function $\omega : E(K_f) \to \mathbb{Z}_q$. 
A subdivision of a graph $H$ in a $\mathbb{Z}_q$-edge-weighted $K_f$ is called a $q$-divisible subdivision of $H$ if every subdivision path has weight congruent to zero modulo $q$.
Let $q\ge 2$ be an integer and let $H$ be a graph with $n$ vertices and $m$ edges. Define $s_q(H)$ to be the smallest number $f$ such that every $\mathbb{Z}_q$-edge-weighted $K_{f}$ contains a $q$-divisible subdivision of $H$. 
Das, Draganić, and Steiner raised the following question (Problem 4.1 in [Tight bounds for divisible subdivisions, J. Combin. Theory, Ser. B 165 (2024) 1–19]): Given $q\in\mathbb{N}$ and a subcubic graph $H$ with $n$ vertices and $m$ edges, is it true $s_q(H)= m(q -  1) + n$? They also established the upper bound $s_q(H)\le 7mq+8n+14q$ for such a graph $H$. In this paper, we improve this bound by showing that $s_q(H)\le (2q -  1)m + 2n - 1 + 4q$, and establishing a sharper bound $s_p(H)\le \frac{3p - 1}{2}m - \frac{p - 1}{2}n + \frac{p + 1}{2}$ for prime $p$ and connected $H$. 
We resolve this problem in the case $q=2$ by proving that  $s_2(H) = m + n$ for any 5-degenerate graph $H$, and in the case $q\ge 2$ and $T$ being a tree, by showing that $s_q(T) = nq - q + 1$.
Let $s_q(H,t)$ be the minimum number $f$ such that every $\mathbb{Z}_q$-edge-weighted $K_f$ contains a $q$-divisible $t$-subdivision of $H$, where a $t$-subdivision of $H$ is a subdivision of $H$ such that each edge of $H$ is subdivided exactly $t$ times. 
We also prove that $s_2(H,1)= m + n$, where $H$ is a tree or a cycle  on $n$ vertices with $m$ edges.

\end{abstract}

\section{Introduction}


A {\em subdivision} of a graph $H$, denoted by $\text{sub}(H)$, is a graph obtained from $H$ by adding new vertices, called {\em subdivision vertices}, on its edges. In this process, each edge of $H$ is replaced by a path, which is referred to as a {\em subdivision path} of $H$.
A subdivision of $H$ is said to be {\em $q$-divisible} if the length of every subdivision path is divisible by $q$. The vertices of $\text{sub}(H)$ that correspond to the original vertices of $H$ are called {\em branch vertices}. Bollobás and Thomason \cite{bollobas} and  Komlós and Szemerédi \cite{komlos} showed that any graph with an average degree of at least $ct^2$, for some constant $c > 0$, contains a subdivision of $K_t$.  Alon, Krivelevich, and Sudakov \cite{AlonKS} established that for any fixed $\varepsilon > 0$, every $n$-vertex graph with average degree $\varepsilon n$ contains a 1-subdivision of $K_k$ with $k = \Omega(\sqrt{n})$.
Liu and Montgomery \cite{LiuM} confirmed a longstanding conjecture posed by Thomassen \cite{Thomassen1}, which asserted that for any integer $k\ge 1$, a sufficiently high average degree in a graph guarantees the existence of a balanced subdivision (a subdivision where every edge is subdivided the same number of times) of the complete graph $K_k$.
This was further improved in \cite{IJHOZ}, where it was shown that an average degree on the order of $\Theta(k^2)$ suffices, a bound that is tight up to a multiplicative constant.

Earlier results on finding subdivisions of a given graph with constrained subdivision path lengths in the literature only apply to graphs with sufficiently large average degree. It is therefore natural to investigate whether such specified subdivisions can also be found in sparse graphs. This leads to the following question:
\begin{ques}
Whether, in a minor or a subdivision of a complete graph, there exists a subdivision of a given graph such that subdivision paths satisfy a prescribed modular length constraint.  
\end{ques}\label{Q:minor} 
Note that, in a minor or a subdivision of a complete graph, the average degree may be only slightly above two, and the girth may be very large. Alon and Krivelevich \cite{AlonK} provided a positive answer to this question for a given subcubic graph by showing that.

\begin{thm}[\cite{AlonK}]\label{alonkre}
    For every subcubic graph $H$ and every integer $q \ge 2$, there exists
an  integer $f$  such that every $K_f$-minor contains a $q$-divisible
subdivision of $H$ as a subgraph. 
\end{thm}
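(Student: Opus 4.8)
The plan is to decouple the problem into a \textbf{reduction step}, passing from a $K_f$-minor to a $\mathbb{Z}_q$-edge-weighted complete graph, and a \textbf{routing step}, solving the weighted problem by Ramsey's theorem. For the reduction, suppose $G$ contains $K_f$ as a minor with connected branch sets $B_1,\dots,B_f$, pairwise joined by at least one connecting edge; fix a spanning tree $T_i$ and a port vertex $p_i$ in each $B_i$. I would assign to the edge $ij$ of $K_f$ the weight $\omega(ij)\in\mathbb{Z}_q$ equal to the residue modulo $q$ of the length of the canonical $p_i$--$p_j$ path (the port-to-attachment tree path in $B_i$, the connecting edge, and the attachment-to-port tree path in $B_j$). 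The point is that a $q$-divisible subdivision of $H$ in this weighted $K_f$ lifts to one in $G$: branch vertices are placed at the relevant ports, and each subdivision path of $H$ is realised by concatenating canonical port paths, so its length modulo $q$ equals the total $\omega$-weight of the corresponding path in $K_f$. Consequently it suffices to show that $s_q(H)<\infty$, i.e.\ that some finite $f$ forces a $q$-divisible subdivision of $H$ in \emph{every} $\mathbb{Z}_q$-edge-weighted $K_f$.

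For the routing step, regard $\omega\colon E(K_f)\to\mathbb{Z}_q$ as a $q$-colouring of the edges of $K_f$ and apply the multicolour Ramsey theorem: if $f$ is at least the Ramsey number guaranteeing a monochromatic $K_t$ in every $q$-colouring, we obtain $t$ vertices all of whose edges carry a single weight $\alpha$. Inside this monochromatic clique a path with $\ell$ edges has weight $\ell\alpha$, so choosing $\ell_0=q/\gcd(\alpha,q)\le q$ gives $\ell_0\alpha\equiv 0\pmod q$. I would then build the subdivision greedily: fix $n$ distinct vertices as branch vertices, and for each of the $m$ edges of $H$ route an $\ell_0$-edge path between the appropriate branch vertices through $\ell_0-1\le q-1$ previously unused vertices. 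Every such path automatically has weight $\equiv 0$, and internal disjointness is immediate since each path consumes fresh internal vertices. As this uses at most $n+(q-1)m$ vertices, taking $t=n+(q-1)m$ (and $f$ equal to the corresponding Ramsey number) completes the routing step; here $m\le\tfrac32 n$ because $H$ is subcubic.

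The step I expect to be the main obstacle is making the reduction \emph{numerically exact modulo $q$}, rather than merely modulo $2$. When two canonical port paths traverse a common branch set they may share tree edges near the port, so the simple path actually embedded in $G$ can be shorter than the declared weight by twice the length of the overlap; this even discrepancy is harmless for $q=2$ but must be eliminated for general $q$. I would control it by exploiting the freedom in the minor: since $f$ is enormous relative to $H$, when routing inside the monochromatic clique one may choose which neighbouring branch sets to use, and fix the ports, so that the attachment vertices relevant to any given branch set lie in distinct subtrees below its port, making each traversal a genuine simple path of the declared length. The subcubic hypothesis is essential here, since each branch vertex of $H$ must emit only up to three internally disjoint paths, and this low degree is exactly what lets the local routing succeed; it also explains why the argument must run through the branch-set geometry rather than through a clique subdivision, as a graph of maximum degree three may contain $K_f$ as a minor while containing no subdivision of $K_5$.
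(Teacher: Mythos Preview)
The paper does not prove Theorem~\ref{alonkre}; it is quoted from Alon and Krivelevich~\cite{AlonK} as background, and all of the paper's own arguments concern the quantity $s_q(H)$, which is defined through $\mathbb{Z}_q$-edge-weighted complete graphs---equivalently, through subdivisions of $K_f$, not minors. There is thus no in-paper proof to compare your proposal against.

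On the merits: your Ramsey routing step is correct and gives $s_q(H)<\infty$ cleanly. The difficulty is exactly where you place it, but your sketch does not close it. There are really two problems, not one. At branch sets used with degree two (interior vertices of a subdivision path) you have the length discrepancy $2\ell$ you describe. At branch sets used with degree three (images of branch vertices of $H$) there is a structural problem as well: the three tree-paths from $p_i$ to the relevant attachment vertices must be internally vertex-disjoint for the embedded object to be a subdivision at all, independently of any modular bookkeeping. Your proposed remedy---choose neighbouring branch sets so that attachment vertices lie in distinct subtrees below the port---runs into a genuine circularity: ports determine weights, weights determine (via Ramsey) the monochromatic clique and the routing inside it, and only \emph{then} do you learn which attachment vertices matter, too late to move a port without destroying the weights already used. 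Moreover the remedy can fail outright: if $T_i$ is a path with $p_i$ at one end, every attachment vertex lies in the single subtree below $p_i$, and no selection of neighbours puts three of them in distinct subtrees. The passage from a $K_f$-minor to a weighted complete graph is therefore not the innocent bookkeeping your outline suggests; it is where the subcubic hypothesis on $H$ does its real work, and it requires a more substantial argument than the one you give.
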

Theorem \ref{alonkre} is qualitatively optimal. Indeed, a $K_f$-minor may have maximum degree 3, and the distance between every pair of vertices of degree 3 is always divisible by $q$. As a result, one cannot find a subdivision of any graph with maximum degree at least 4, nor a subdivision of a 3-regular graph that contains a subdivision path whose length is not divisible by $q$. 
For a subcubic graph $H$ and an integer $q \ge 2$, define $f(H,q)$ to be the smallest integer such that every graph containing a $K_f$-minor also contains a $q$-divisible subdivision of $H$.
Das, Draganić, and Steiner \cite{SNR} established that $f(H,q) \le 7mq + 8n + 14q$ for any subcubic graph $H$ with $n$ vertices and $m$ edges. This is a linear upper bound. Indeed, since every subdivision path must have length at least $q$ in a $q$-divisible subdivision of $H$, such a subdivision must contain at least $m(q - 1) + n$ vertices. Therefore, for $f = m(q - 1) + n - 1$, the complete graph $K_f$ is itself a $K_f$-minor that does not contain a $q$-divisible subdivision of $H$. In the absence of constructions yielding a better lower bound, it is natural to conjecture that this straightforward bound may be optimal. However, due to the considerable structural complexity of $K_f$-minors, it is often more tractable to restrict the class of host graphs to $K_f$-subdivisions. Motivated by this reason, Das, Draganić, and Steiner proposed the following problem in \cite{SNR}.

\begin{prob}[\cite{SNR}]\label{Prob:DDS}
    Given $q\in\mathbb{N}$ and a subcubic graph $H$ with $n$ vertices and $m$ edges, is it true for $f = m(q -  1) + n$ that every subdivision of $K_f$ contains a $q$-divisible subdivision of $H$? 
\end{prob}

Let $\mathbb{Z}_q$ denote the cyclic group of order $q$. A subdivision of $K_f$ induces a weight function $\omega : E(K_f) \to \mathbb{Z}_q$, defined by setting $\omega(e)$ to be the length modulo $q$ of the subdivision path corresponding to the edge $e \in E(K_f)$. In this setting, the graph $K_f$ equipped with such a weighting is referred to as a $\mathbb{Z}_q$-edge-weighted $K_f$. For a path $P$ in an edge-weighted graph, its weight, denoted by $\omega(P)$, is defined as the sum of the weights of all edges in $P$. In this paper, a subdivision $\sub(H)$ of a graph $H$ in a $\mathbb{Z}_q$-edge-weighted $K_f$ is called a {\em $q$-divisible subdivision} of $H$ if every subdivision path in $\sub(H)$ has weight congruent to zero modulo $q$.

For an integer $q \ge 2$ and a graph $H$, the {\em $q$-divisible subdivision number} of $H$ is defined as $$s_q(H)=\min\{f : \text{Every $\mathbb{Z}_q$-edge-weighted $K_f$ contains a $q$-divisible subdivision of $H$}\}.$$ 
When all edges of $K_f$ have weight 1, every subdivision path in a $q$-divisible subdivision $\sub(H)$ of $H$ has length at least $q$. This implies $|V(\sub(H))|\ge m(q - 1) + n$. Hence $s_q(H)\ge m(q-1)+n$. 
In this terminology, the assertion that $s_q(H) = m(q-1) + n$ for a subcubic graph $H$ with $n$ vertices and $m$ edges implies Problem~\ref{Prob:DDS} (Note that in the original setting, a branch vertex of degree at most two in the subdivision of $H$ may lie on a subdivision path of a $K_f$-subdivision.).

In this paper, we resolve Problem~\ref{Prob:DDS} for $q=2$. In fact, we prove a more general result: we determine the exact value of $s_2(H)$ when $H$ is any 5-degenerate graph.
\begin{restatable}{thm}{Degenerate}\label{qis2}
 Let $H$ be a 5-degenerate graph with $n$ vertices and $m$ edges. Then $s_2(H)=n+m$.
\end{restatable}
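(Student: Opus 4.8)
The lower bound $s_2(H)\ge n+m$ is already recorded in the excerpt (take all edge weights equal to $1$), so the content is the upper bound: every $\mathbb{Z}_2$-edge-weighted $K_{n+m}$ contains a $2$-divisible subdivision of $H$. The plan is to reduce this to a clean matching problem. Fix a weighting $\omega$. I will look only for subdivisions in which each edge of $H$ is realized either by a \emph{direct} edge of $K_{n+m}$ (when its two branch vertices span a weight-$0$ edge, a legitimate length-$1$ subdivision path of weight $0$) or by a \emph{length-$2$ detour} through a single extra vertex. Concretely, I seek an injection $\beta\colon V(H)\to V(K_{n+m})$ together with, for each edge $uv\in E(H)$ with $\omega(\beta(u)\beta(v))=1$, a distinct vertex $\ell(uv)\notin\beta(V(H))$ that is \emph{balanced} for the pair, i.e. $\omega(\beta(u)\ell(uv))=\omega(\ell(uv)\beta(v))$; such an $\ell(uv)$ yields the weight-$0$ path $\beta(u)\,\ell(uv)\,\beta(v)$. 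Writing $E_1$ for the set of edges whose branch vertices receive weight $1$, this uses exactly $n$ branch vertices and $|E_1|\le m$ detour vertices, all distinct, so the paths are internally disjoint and the budget $n+m$ is automatically met; the only thing left to prove is the existence of $\beta$ and of the system of distinct representatives $\ell$.

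For a fixed $\beta$, the existence of $\ell$ is exactly the question of saturating $E_1$ in the bipartite ``balance'' graph between $E_1$ and the $m$ non-branch vertices, in which $e=\beta(u)\beta(v)\in E_1$ is joined to $w$ precisely when $w$ is balanced for $e$ (equivalently, when the triangle $\beta(u)\beta(v)w$ has odd total weight). By Hall's theorem it suffices that every $F\subseteq E_1$ satisfy $|N(F)|\ge|F|$. The $5$-degeneracy of $H$ enters through an ordering $v_1,\dots,v_n$ of $V(H)$ in which each $v_i$ has at most $5$ earlier neighbours: I would build $\beta$ along this order, choosing each branch vertex $\beta(v_i)$ from the as-yet-unused vertices so as to control its at most $5$ back-edges, and then verify Hall's condition for the resulting $E_1$. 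The role of the constant $5$ is to keep the number of constraints imposed by each new branch vertex small enough that a reservoir/averaging count leaves room both to place $\beta(v_i)$ and to reserve balanced links for its weight-$1$ back-edges.

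The genuine obstruction is the \emph{cut-like} behaviour of $\omega$. If $\omega$ is the indicator of a vertex bipartition $V=A\cup B$ (weight $0$ inside the parts, weight $1$ across), then a cross pair $e$ has \emph{no} balanced vertex at all, since every $w$ lies in $A$ or $B$ and is unbalanced for $e$, so Hall fails catastrophically for any $\beta$ that creates a cross weight-$1$ edge. I expect to resolve this by a dichotomy. Either $\omega$ is cut-like, in which case one side of the cut is a weight-$0$ clique of size at least $n$ whenever $H$ is connected --- because a connected $5$-degenerate graph has $m\ge n-1$, forcing the larger of the two classes among the $n+m$ vertices to have at least $\lceil(n+m)/2\rceil\ge n$ vertices --- so $H$ embeds directly with all edges weight $0$ and no detours (the disconnected case handled by packing the components across the two classes); or $\omega$ is far from every cut, in which case every weight-$1$ edge lies on many odd triangles, every member of $E_1$ has many balanced candidates, and Hall's condition holds with room to spare.

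The main difficulty I anticipate is making this dichotomy quantitative and uniform, i.e. controlling the intermediate regime where $\omega$ is neither a clean cut nor generic, while respecting the \emph{tight} budget $n+m$, which allows at most one extra vertex per edge on average and hence leaves essentially no slack. Concretely, the hard step is to choose $\beta$ (guided by the degeneracy order) so that every weight-$1$ edge it creates carries enough balanced links for Hall, even as the reservoir of unused vertices shrinks toward the end of the construction. This is where the constant $5$ must be spent carefully, and where a global matching argument may have to replace a naive greedy in order to avoid end-of-process scarcity of balanced vertices.
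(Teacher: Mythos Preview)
Your plan is not the paper's route, and the gap you flag in the intermediate regime is the real difficulty, not a detail to be filled in later. A single perturbation already makes the Hall condition fragile: take a cut $A\cup B$ (weight $0$ inside parts, $1$ across) and flip one vertex $v\in A$ so that all its edges have weight $1$. Every cross edge $ab$ with $a\in A\setminus\{v\}$, $b\in B$ then has $v$ as its \emph{unique} balanced link, so Hall fails as soon as $\beta$ produces two such cross edges; avoiding them forces $\beta(V(H))$ into one of the residual weight-$0$ cliques, whose sizes you have no direct control over. Flipping more vertices yields a family of near-cuts with no clean quantitative separation from the generic case, and your proposal offers no mechanism for steering $\beta$ through this. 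Note too that restricting to paths of length at most $2$ is strictly stronger than what the theorem asserts --- the paper's own construction uses zero-weight paths of length up to $4$ --- so you may be aiming at a harder target than necessary.

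The paper avoids any global matching by induction on $|V(H)|$. Delete a vertex $u$ with $d:=\deg_H(u)=\delta(H)\le 5$; locate in $K_{n+m}$ a connector on exactly $d+1$ vertices --- a weight-$1$ edge for $d=1$; a $T_1/T_3$ triangle for $d=2$; such a triangle plus one extra vertex for $d=3$; two $T_1/T_3$ triangles sharing a vertex for $d=4$; two disjoint such triangles for $d=5$ --- remove it, embed $H-u$ in the remaining $K_{(n-1)+(m-d)}$ by induction, and route the $d$ edges at $u$ through the connector by an explicit finite case analysis. The cut enters only as the complementary branch: if no $T_1/T_3$ exists at all, then by Lemma~\ref{bipar} the weighting is exactly a cut and the larger side (of size at least $\lceil(n+m)/2\rceil\ge n$ for connected $H$) hosts a copy of $H$ on weight-$0$ edges. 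The constant $5$ is thus spent as five handcrafted local gadgets, one per value of $d$, rather than as slack in a counting argument.
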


For general $q\ge 3  $, we address Problem~\ref{Prob:DDS} for trees. Specifically, we prove the following theorem.

\begin{restatable}{thm}{Tree}\label{THM:Tree}
    Let $T$ be a tree with $n$ vertices, and $q\ge 2$ be an integer. Then $s_q(T)=nq-q+1$. 
\end{restatable}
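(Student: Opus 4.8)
The lower bound is immediate from the general observation made in the introduction: taking all edges of $K_f$ to have weight $1$ forces every subdivision path of a $q$-divisible subdivision to have length at least $q$, so any such subdivision of $T$ has at least $(n-1)(q-1)+n = nq-q+1$ vertices, whence $s_q(T)\ge nq-q+1$. It therefore remains to prove the matching upper bound, that every $\mathbb{Z}_q$-edge-weighted $K_f$ with $f=nq-q+1$ contains a $q$-divisible subdivision of $T$. The entire difficulty is that $f$ equals the trivial lower bound \emph{exactly}, so the embedding must be vertex-optimal: on average each of the $n-1$ edges may consume only $q$ vertices of $K_f$ (its $q-1$ internal subdivision vertices together with one new branch vertex), and essentially no waste can be tolerated.

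The engine I would use is a prefix-sum pigeonhole argument. Given any simple path $u_0u_1\cdots u_L$ in $K_f$, set $s_0=0$ and $s_i=\omega(u_0u_1)+\cdots+\omega(u_{i-1}u_i)$ for $i\ge 1$; then a subpath $u_au_{a+1}\cdots u_b$ has weight $\equiv 0\pmod q$ precisely when $s_a=s_b$ in $\mathbb{Z}_q$. I would first settle the case where $T$ is a path $P_n$: build any simple path on all $f=(n-1)q+1$ vertices of $K_f$, and note that among the $(n-1)q+1$ prefix sums $s_0,\dots,s_L$ some element of $\mathbb{Z}_q$ is attained at least $n$ times, say at indices $i_1<\cdots<i_n$. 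Declaring $u_{i_1},\dots,u_{i_n}$ to be branch vertices yields $n-1$ internally disjoint consecutive segments each of weight $\equiv 0\pmod q$, i.e. a $q$-divisible subdivision of $P_n$. This already shows that the constant $nq-q+1$ is exactly right and cannot be lowered.

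For an arbitrary tree I would argue by induction on $n$, peeling off a leaf $\ell$ with neighbour $p$, applying the induction hypothesis to $T'=T-\ell$ to embed a $q$-divisible subdivision of $T'$, and then routing one further zero-weight path from the image of $p$ to a fresh vertex to attach $\ell$. The key step I would isolate as a routing lemma: from a placed branch vertex $x$ with a set $F$ of free vertices, one finds an $x$-to-$F$ path of weight $\equiv 0\pmod q$ using at most $q-1$ internal vertices, so that attaching $\ell$ costs at most $q$ vertices and the running count stays within budget. The main obstacle — and where the bulk of the work lies — is that no such lemma can hold for a fixed $x$ under arbitrary local weightings: if, for example, every edge at $x$ carries the same nonzero weight while all remaining edges vanish, then \emph{no} zero-weight path leaves $x$ at all, and under the tight budget there is no slack to absorb this at the final edge, where as few as $q$ free vertices may remain. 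I would resolve this by refusing to route the edges independently: instead I would use the freedom left by the induction to place the branch vertices on non-obstructed vertices, and, whenever a zero-weight edge among the free vertices is available, use it as a length-one subdivision path so as to bank the $q-1$ vertices it saves. The crux is to prove that any local configuration obstructing the routing necessarily liberates enough vertices elsewhere, so that the global budget $nq-q+1$ is always met; verifying this balance between obstruction and saving is the part I expect to require the most care.
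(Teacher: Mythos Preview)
Your path case via prefix sums is fine, but the general tree case has a real gap: you correctly diagnose that the naive routing lemma (``from the already-placed parent $p$, find a zero-weight path into the $\le q$ remaining free vertices'') can fail, and your proposed fix of ``banking'' savings from short subdivision paths elsewhere is not carried out. In fact this banking scheme does not obviously close; the obstruction you describe (all edges at $p$ carry the same nonzero weight, all other edges zero) need not have occurred at any earlier step of the induction, so there need not be any savings to spend at the very moment you need them. The balance you say you would verify is precisely the part that is missing.

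The paper avoids this accounting entirely by reversing the order of operations. Rather than embedding $T'=T-\ell$ first and then trying to attach $\ell$ from a pinned vertex $p$, it first sets aside a small universal gadget and only then embeds $T'$ in the remainder. Concretely, call a path $P=x_1\cdots x_k$ \emph{nice} if the suffix weights $\omega(x_i\cdots x_k)$ for $1\le i<k$ are all distinct and all nonzero; a longest nice path has at most $q$ vertices. Remove such a $P$, apply induction to embed $T-\ell$ inside $K_f-V(P)$ (the counts match exactly), and let $p$ be the branch vertex for the neighbour of $\ell$. Now either some edge $px_i$ has weight $0$ (attach $\ell$ there), or $-\omega(px_k)$ is one of the suffix weights (giving a zero-weight path $x_i\cdots x_kp$), or else appending $p$ to $P$ produces a strictly longer nice path, contradicting maximality. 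The point is that the gadget is chosen \emph{before} $p$ is, and its maximality is exactly what guarantees attachment from an arbitrary external vertex; this is the idea your proposal is missing.
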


For general $q$ and $H$, we have the following result. 

\begin{restatable}{thm}{Generalq}\label{generalq}
For any graph $H$ with $n$ vertices and $m$ edges, and for any integer $q\ge 2$, we have
    $s_q(H)\le (2q - 1)m + 2n - 1 + 4q$. 
\end{restatable}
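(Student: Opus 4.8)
The plan is to fix an arbitrary weighting $\omega\colon E(K_f)\to\mathbb Z_q$ and to construct a $q$-divisible subdivision of $H$ greedily. I would first reserve a branch vertex $b_v$ for each $v\in V(H)$, and then process the edges of $H$ one by one, joining $b_x$ to $b_y$, for each $xy\in E(H)$, by a path of weight $\equiv 0\pmod q$ whose interior vertices are fresh (disjoint from all branch vertices and from the interiors already used). A subdivision path of weight $\equiv 0$ is exactly what the definition of a $q$-divisible subdivision asks for, so this produces the desired subgraph, and the vertex budget is $n$ for the branch vertices, plus the total number of interior vertices, plus a reservoir that guarantees fresh vertices of the right type remain at every step.

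The point that controls the exact shape of the bound is that the two endpoints of each path \emph{cannot} be chosen arbitrarily: for instance, when $\omega$ is a cut, i.e.\ $\omega(xy)=\psi(x)+\psi(y)$, every $b_x$--$b_y$ path has weight $\psi(b_x)+\psi(b_y)+2\sum_{\mathrm{int}}\psi$, so $0$ is realizable only when $\psi(b_x)+\psi(b_y)$ lies in the even subgroup $2\mathbb Z_q$. To organize this I would introduce the subgroup $G=\langle\,\omega(xy)+\omega(yz)+\omega(zx)\,\rangle\le\mathbb Z_q$ generated by all triangle weights. Fixing a root $r$ and setting $\psi(x)=\omega(rx)$, the triangle $r,x,y$ gives
\[
\omega(xy)\equiv-\psi(x)-\psi(y)\pmod G ,
\]
so $\omega$ is a cut modulo $G$; consequently an $x$--$y$ path with a rich interior realizes exactly the weights in the coset $-\psi(x)-\psi(y)+P$, where $P=2\langle\psi\rangle+G$. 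This coset meets $0$ iff $\psi(x)+\psi(y)\in P$, and since $P\supseteq2\langle\psi\rangle$ the index $[\langle\psi\rangle:P\cap\langle\psi\rangle]$ is at most $\gcd(2,q)\le2$. Thus the vertices of $K_f$ split into at most two classes according to the coset of $\psi$, and placing all branch vertices in one class forces $\psi(b_x)+\psi(b_y)\equiv2\psi(b_x)\equiv0\pmod P$ for every edge. The larger class has at least $(f-1)/2$ vertices, which is where the term $2n$ in the bound comes from.

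The crux is the routing lemma: given $b_x,b_y$ in the chosen class and a pool of at least $2q-1$ fresh interior vertices (with all potential classes represented), construct a weight-$0$ $b_x$--$b_y$ path. I would tune the weight in two stages. First, adjust the residue modulo $G$: inserting an interior vertex $w$ shifts $2\sum_{\mathrm{int}}\psi$ by $2\psi(w)$, and since the target lies in $2\langle\psi\rangle+G$ and $\langle\psi\rangle$ is cyclic, $O(q)$ such insertions suffice to land in the correct coset of $G$. Second, correct the weight within $G$: replacing a single path-edge $xy$ by a detour $x\,w\,y$ changes the weight by the triangle weight $\omega(xw)+\omega(wy)-\omega(xy)$, and these triangle weights generate $G$ by definition, so again $O(q)$ detours pin the weight to exactly $0$. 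A careful accounting of the two stages caps the number of fresh interior vertices at $2q-1$ per edge; the main difficulty is to prove this robustly for \emph{every} weighting (not just cuts, which are the extremal, least flexible case) and to guarantee that interior vertices of the required potentials are always available.

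Finally I would assemble the pieces. Processing the $m$ edges greedily uses at most $(2q-1)m$ interior vertices in total; a reservoir of $4q$ vertices, kept disjoint from everything, ensures that at each step every potential class still contains enough unused vertices to run the routing lemma and to perform the two-stage tuning. Together with the $n$ branch vertices and the factor-two loss ($2n-1$) from confining the branch vertices to one of the two classes, this gives $f=(2q-1)m+2n-1+4q$, as claimed. I expect the routing lemma of the third paragraph to be the principal obstacle, both in bounding the interior cost by exactly $2q-1$ and in maintaining disjointness of the interiors throughout the greedy process.
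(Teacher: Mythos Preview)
Your plan has the right shape and the accounting is exactly right, but the routing lemma is where the argument breaks, and the break is structural rather than merely technical. Your subgroup $G$ is defined \emph{globally}, as the group generated by all triangle weights in the full $K_f$, yet each routing step has access only to the vertices that are still fresh at that moment. Nothing prevents the generators of $G$ from being concentrated in a few triangles: once those vertices are consumed as interiors of early subdivision paths, the remaining pool (together with your $4q$-vertex reservoir) may only support detour costs lying in a proper subgroup $G'<G$. At that point your stage-two detours can move the weight only within $G'$; and because you committed to the branch vertices at the outset using the class structure for $G$, there is no guarantee that $\psi(b_x)+\psi(b_y)$ lies in the analogue of $P$ computed for the smaller group $G'$, so the path weight may be trapped in a nonzero coset of $G'$ with no way out. (There is also a small slip: you generate $G$ from the cycle weights $\omega(xy)+\omega(yz)+\omega(zx)$ but then invoke the detour cost $\omega(xw)+\omega(wy)-\omega(xy)$; these differ by $2\omega(xy)$ and need not generate the same subgroup.)

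The paper avoids this by reversing your order of operations and making the subgroup adaptive. It builds all $m$ routing gadgets (``connectors'') \emph{first}, before any branch vertex is chosen, via a dichotomy lemma: in a $B$-restricted remainder, either one can extract a $B$-connector on at most $2|B|-1$ vertices, or after deleting at most $2|B|-3$ vertices the remainder becomes $B'$-restricted for some proper $B'<B$. Iterating yields connectors $F_1,\dots,F_m$ along a decreasing chain $B_1\supseteq\cdots\supseteq B_m$; since the subgroup at least halves at each drop, the total loss from all drops is at most $\sum_{j\ge0}(2\cdot2^{-j}q-3)\le4q$, which is precisely the $4q$ in the bound. The final remainder is $B_m$-restricted on at least $2n-1$ vertices, and your index-$\le2$ observation then gives an $n$-clique there with all edge weights in $B_m$. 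Only now are branch vertices chosen, inside this clique; for the $k$-th edge one has $\omega(b_xb_y)\in B_m\le B_k$, so routing through the pre-built $B_k$-connector $F_k$ produces a zero-weight path. The idea missing from your plan is exactly this adaptive connector/subgroup-drop dichotomy, together with postponing the branch-vertex choice until the final subgroup $B_m$ is known.
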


By induction on the edges of $H$ based on  Theorem \ref{THM:Tree}, we can obtain an improved upper bound when $q$ is a prime.

\begin{restatable}{thm}{Prime}\label{primeorder}
    Let H be a connected graph with $n$ vertices and $m$ edges. Let $p \ge 3$ be a prime. Then $s_p(H) \le \frac{3p - 1}{2}m - \frac{p - 1}{2}n + \frac{p + 1}{2}$. 
\end{restatable}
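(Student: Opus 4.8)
The plan is to induct on the number of edges $m$, with Theorem~\ref{THM:Tree} as the base case. Write $B(m)=\frac{3p-1}{2}m-\frac{p-1}{2}n+\frac{p+1}{2}$ for the claimed bound. A direct computation gives $B(n-1)=pn-p+1=s_p(T)$ and $B(m)-B(m-1)=\frac{3p-1}{2}$ (an integer, since $p$ is odd); so the spanning-tree case is exactly Theorem~\ref{THM:Tree}, and each additional edge is allotted $\frac{3p-1}{2}$ vertices. If $m\ge n$ then $H$ contains a cycle, hence a non-bridge edge $e=xy$; put $H'=H-e$, a connected graph on $n$ vertices with $m-1$ edges. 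In a $\mathbb{Z}_p$-edge-weighted $K_f$ with $f=B(m)$ I would reserve a set $R$ of $\frac{3p-1}{2}$ vertices, use the inductive hypothesis on the complete graph spanned by the other $B(m-1)\ge s_p(H')$ vertices to obtain a $p$-divisible $\sub(H')$, and then build the one missing subdivision path for $e$ between the branch vertices $b_x$ and $b_y$ using only the reservoir $R$, with weight $\equiv 0\pmod p$.

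Everything thus rests on a connection lemma: in a $\mathbb{Z}_p$-edge-weighted complete graph, two given vertices can be joined by a path, internally supported on a reservoir of $\frac{3p-1}{2}$ vertices, of weight $0$ modulo $p$. I would spend this budget as $(p-1)+\frac{p+1}{2}$. The $p-1$ vertices control the length: letting the number of internal vertices vary makes the path length run over a complete residue system modulo $p$, which disposes of constant and other ``length-driven'' weightings just as in the tree argument. The remaining $\frac{p+1}{2}$ vertices are for weight correction, where primality enters through a Cauchy--Davenport / Erd\H{o}s--Ginzburg--Ziv type estimate: inserting or omitting these vertices shifts the running weight across enough of $\mathbb{Z}_p$ to reach $0$. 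Because each inserted vertex changes the length and the weight at once, the exact constant $\frac{3p-1}{2}$ should fall out of optimizing the interplay of these two effects. Heuristically this also explains why a chord is dearer than a tree edge: adding a leaf, with its new and freely placed branch vertex, costs only $p$ vertices (the increment of $s_p$ when $n$ and $m$ both grow by one), whereas a chord must link two already-fixed branch vertices, and the surcharge $\frac{3p-1}{2}-p=\frac{p-1}{2}$ is the price of losing that freedom.

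The principal obstacle is that the connection lemma fails for rigidly fixed endpoints, so the induction must be set up to keep some freedom at $x$ and $y$. To see the failure, take the potential weighting $\omega(uv)=\psi(u)+\psi(v)$: every path from $b_x$ to $b_y$ then has weight $\psi(b_x)+\psi(b_y)+2\sum_{\text{internal}}\psi$, so if $\psi(b_y)=1$ while $\psi\equiv 0$ on $b_x$ and on the whole reservoir, every such path has weight $1$, however large the reservoir. Since $2$ is invertible for odd $p$, this rigidity is broken exactly by admitting internal vertices of varied potential, that is, by controlling where $b_x$ and $b_y$ are allowed to sit. The honest plan is therefore to strengthen the inductive statement so that the embedding of $\sub(H')$ carries flexibility at the endpoints of the removed edge -- for example, so that $b_x$ and $b_y$ may be selected from large candidate pools, or the two $p$-divisible paths ending at them may be locally rerouted -- and to propagate this invariant through the induction while still paying only $\frac{3p-1}{2}$ vertices per edge. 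Making that bookkeeping tight, and verifying that the additive estimates over $\mathbb{Z}_p$ indeed yield the constant $\frac{3p-1}{2}$ rather than something larger, is the crux of the argument and the place where primality is genuinely used.
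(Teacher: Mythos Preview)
Your inductive framework is exactly the paper's: induct on $m$, base case $m=n-1$ from Theorem~\ref{THM:Tree}, and for $m\ge n$ delete a non-bridge edge $e$, apply the hypothesis to $H-e$, and route $e$ through a set of $\tfrac{3p-1}{2}$ reserved vertices. Where you diverge is in how the reservoir is used, and this is the gap.

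You correctly observe that a bare ``connection lemma'' for two fixed endpoints through an arbitrary reservoir is false (the potential-weighting example), and you propose to repair this by strengthening the inductive hypothesis to carry flexibility at $b_x,b_y$. But you do not say what invariant to carry or how to propagate it, and it is not clear this can be made to work within the same budget. The paper avoids this entirely with a different idea: the reservoir is not an unstructured set but a \emph{$4$-connector} $X=x_1K_4x_2\cdots K_4x_{s+1}$, built greedily before embedding $\sub(H')$. By Cauchy--Davenport (Observation~\ref{Obs: K4-connector}), $s$ efficient $4$-cliques give at least $\min\{2s+1,p\}$ distinct path weights between $x_1$ and $x_{s+1}$; so $s=\tfrac{p-1}{2}$ cliques, using $3s+1=\tfrac{3p-1}{2}$ vertices, already realize every residue. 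Crucially, such an $X$ is a \emph{universal adapter}: for \emph{any} two external vertices $u,v$, the path $u\,x_1\,P\,x_{s+1}\,v$ attains weight $0$ for a suitable internal $P$. No flexibility at $b_x,b_y$ is needed.

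The second idea you are missing is what happens when the greedy construction of the $4$-connector stalls with $|R(X)|<p$. The paper does not give up; instead, a structural analysis (Claims~\ref{zerotwo}--\ref{nonzero}) of the remaining complete graph $K_f[U]$ shows that its nonzero-weight edges are extremely constrained (essentially a cut or a matching), so $U$ contains a zero-weight $n$-clique and $H$ embeds outright. Your proposal has no analogue of this dichotomy, and without it the argument cannot close. So: the induction skeleton is right, but the two load-bearing pieces --- the $4$-connector giving the exact constant $\tfrac{3p-1}{2}$, and the structural endgame when the connector cannot be completed --- are absent.
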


A {\em $t$-subdivision} of a graph $H$, denoted by $\sub_t(H)$, is the graph obtained by replacing each edge of $H$ with a path of length exactly $t+1$.
A natural question is what is the minimum number $f$ such that every $Z_q$-edge-weighted $K_f$ contains a $q$-divisible $\sub_t(H)$? 
Formally, for positive integers $q\ge 2$, $t$ with $q|(t+1)$, and a graph $H$, the {\em $q$-divisible $t$-subdivision number} of $H$ is defined  as 
$$s_q(H,t)=\min\{f : \text{Every $\mathbb{Z}_q$-edge-weighted $K_f$ contains a $q$-divisible $\sub_t(H)$}\}.$$
\noindent{\bf Remark:} (1) The condition $q|(t+1)$ is necessary because no $q$-divisible $\sub_t(H)$ exists in the $\mathbb{Z}_q$-edge-weighted $K_{f}$ in which all edges are assigned weight 1.

(2) The existence of $s_q(H,t)$ can be guaranteed by the Ramsey Theorem.

For a given graph $H$ with $n$ vertices and $m$ edges, $|V(\sub_t(H))|=n+tm$. Hence $s_q(H,t)\ge n+tm$. Unlike the case for $s_q(H)$, we have reason to believe $s_q(H)=n+(q-1)m$, while for $q\ge 3$, we have $s_q(H,t)> n+tm$. 
Indeed, if we fix a vertex $x$ in $K_{n+tm}$, assign 1 to all edges adjacent to $x$, and assign 0 to all the remaining edges, then we cannot find a $q$-divisible $\sub_t(H)$ in this $\mathbb{Z}_q$-edge-weighted $K_{n+tm}$.
In this paper, we further determine $s_2(H, 1)$ for trees and cycles.

\begin{restatable}{thm}{onesubTree}\label{ts}
 (1)   Let $T$ be a tree with $n$ vertices. Then $s_2(T,1)=2n-1$. 

 (2) Let $C$ be a cycle of order $n$. Then $s_2(C,1)=2n$.

\end{restatable}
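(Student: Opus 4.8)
The lower bounds are immediate from the general inequality $s_q(H,t)\ge n+tm$ recorded above: taking $q=2$, $t=1$ and $m=n-1$ for a tree gives $s_2(T,1)\ge 2n-1$, while $m=n$ for a cycle gives $s_2(C,1)\ge 2n$. So in both cases only the matching upper bound requires work, and at the threshold the host and the target have equally many vertices; hence any $2$-divisible $\sub_1$ we find must use \emph{every} vertex of $K_f$, partitioned into $n$ branch vertices and ($n-1$ or $n$) subdivision vertices. Throughout I view $\omega$ as a $2$-colouring of $E(K_f)$. A vertex $w$ is a legal subdivision vertex for an edge $uv$ of $H$ (with branch images $a_u,a_v$) exactly when $\omega(a_uw)=\omega(wa_v)$, i.e. $w$ sees $a_u,a_v$ in the same colour; I call the length-$2$ path $a_u w a_v$ a \emph{monochromatic cherry} with centre $w$. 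Thus the task is to embed $V(H)$ as branch vertices and to choose, for each edge of $H$, a distinct cherry centre, all $f$ vertices being distinct.

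For the tree I would argue by induction on $n$ (base cases $n\le 2$ being the triangle argument that any $2$-coloured $K_3$ has a vertex seeing the other two alike), peeling a leaf $\ell$ with neighbour $p$ and setting $T'=T-\ell$. Given the $2$-coloured $K_{2n-1}$, the plan is to embed a $2$-divisible $\sub_1(T')$ on a suitable set of $2n-3$ vertices and then attach $\ell$ using the two remaining vertices as the pair $(w,\ell)$, where $w$ is the new cherry centre for $p\ell$. At any such attachment step, with parent image $p^{*}$ and remaining pool $R$, a legal extension exists \emph{unless} $R$ is internally monochromatic in one colour $\bar c$ while $p^{*}$ joins all of $R$ in the opposite colour $c$; this rigid \emph{monochromatic obstruction} is the only way to get stuck.

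Defeating that obstruction is the heart of the argument. I would (i) exploit the freedom in the choice of root and of the peeling order so that difficult attachments are deferred, and (ii) when the pool does become monochromatic in a colour $d$, observe that \emph{every} cherry inside a monochromatic pool is legal, so the remaining subtrees can be placed freely and only the boundary edges (from already-placed parents into the pool) carry a constraint: each such edge needs its centre $w\in R$ to satisfy $\omega(p^{*},w)=d$. These boundary centres can then be chosen by a system of distinct representatives, whose Hall condition I would verify using the tree structure, rerouting by an augmenting/exchange argument whenever some frontier parent is adversely coloured. Making this exchange step rigorous — showing the monochromatic obstruction can always be circumvented within exactly $2n-1$ vertices — is the main obstacle I anticipate.

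For the cycle, note that a $2$-divisible $\sub_1(C_n)$ is precisely a cycle of length $2n$ in $K_{2n}$ whose alternate (subdivision) vertices are locally monochromatic; equivalently I seek a Hamiltonian cycle of $K_{2n}$ in which the consecutive edge-pairs $(e_1,e_2),(e_3,e_4),\dots$ are each monochromatic. I would build this from the tree case applied to the spanning path $P_n$: embed a $2$-divisible $\sub_1(P_n)$ using $2n-1$ of the $2n$ vertices, leaving a single spare vertex $z$, and then close the cycle by taking $z$ as the centre joining the two path endpoints $a_1,a_n$, which requires $\omega(a_1z)=\omega(za_n)$. Since only one spare vertex is available, this closing step is tight and is the extra difficulty for the cycle; I would handle it by arranging the path embedding so that its endpoints are compatible with $z$, or by a short exchange along the path when they are not, thereby reducing once more to the monochromatic obstruction analysed in the tree case.
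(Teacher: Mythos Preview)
Your proposal is not a proof: in both parts you correctly identify the obstruction but explicitly leave open the step that overcomes it (``making this exchange step rigorous \dots\ is the main obstacle I anticipate''). That step is genuinely the whole difficulty, and your sketch (deferred peeling order, an SDR/Hall argument, an unspecified augmenting exchange) does not supply the missing idea. In particular, when you peel a single leaf the residual pool has only two vertices, so there is no room for any SDR or rerouting argument at that scale; the obstruction must be broken \emph{before} you reach that final step, and you have not said how.

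For the tree, the paper's argument is structurally different from yours and avoids the single-leaf trap. It first proves (by a one-line use of induction on $T-v$) that in a minimal counterexample, for every edge $x_1x_2$ there is a vertex $x_3$ with $\omega(x_1x_3)=\omega(x_2x_3)\neq\omega(x_1x_2)$. Then it splits on the tree: either $T$ has a leaf attached to a degree-$2$ vertex, in which case one removes \emph{two} tree vertices and uses a concrete four-vertex gadget (built from the claim above) to attach both at once; or $T$ has two leaves on a common parent, in which case one removes both leaves, shows that a certain alternating $4$-cycle cannot occur, and from this deduces a rigid total ordering of all $2n-1$ vertices that forces a monochromatic star, contradicting the claim. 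There is no Hall/exchange component.

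For the cycle, the paper does not build on the path case at all. It proves a stronger statement---every $2$-coloured $K_{2n}$ has a Hamiltonian cycle that is the union of a red path and a blue path, both of even length---and the engine is the Bessy--Thomass\'e theorem partitioning a $2$-coloured complete graph into two monochromatic Hamiltonian cycles of different colours (with small boundary cases). Each of the three cases of that theorem is then handled by explicit splicing of the two monochromatic cycles. Your ``close up with the one spare vertex $z$'' plan runs into exactly the same monochromatic obstruction you could not resolve in the tree case, and the paper's route shows that the cycle problem is better attacked with global structure rather than a local endpoint fix.
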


In Section 2, we present preliminary notions and results necessary for the proofs. Section 3 is devoted to the proofs of Theorems \ref{generalq}, \ref{THM:Tree}, and \ref{primeorder}. In Section 4, we establish Theorem \ref{ts}. Finally, we provide some concluding remarks in the last section.

\section{Preliminaries}

We introduce more definitions and notation used in this article.  Let $V(G)$ and $E(G)$ denote the vertex set and edge set of a graph $G$, respectively.   For any two disjoint subsets $X,Y\subseteq V(G)$, let $G[X, Y]$ denote the induced bipartite subgraph of $G$ with vertex classes $X$ and $Y$. Define a clique of order $n$ as an $n$-clique.  
Let $(A, + )$ be an additive group. For subsets $S_1,S_2\subseteq A$, define their sum set as $S_1 + S_2 = \{s_1 + s_2| s_1\in S_1,s_2\in S_2\}$. More generally, define $\sum_{i = 1}^kS_i = \{s_1 + s_2 + \dotsb + s_k |s_i\in S_i,1\le i \le k\}$ for $S_i\subseteq A$, $1\le i\le k$. We write 
$B\le A$ to indicate that $B$ is a subgroup   of $A$, and $B<A$ to denote that $B$ is a proper subgroup of $A$. For $c_1,c_2,\dotsc,c_s\in A$, the subgroup of $A$ they generate is denoted by $\langle c_1,c_2,\dotsc,c_s\rangle$. With a slight ambiguity in the notation used, we will use the original graph's vertex notation to stand in for its corresponding branch vertex within the subdivision. Two edge-weighted graphs $G$ and $H$ are isomorphic if there exists an isomorphism between them that preserves the edge weights.

Let $(A, +)$ be a finite additive group. In the rest of this section, we write $K_f$ for an edge-weighted complete graph $K_f$ with weight function $\omega: E(K_f) \to A$ for convenience.




\begin{defi}[Efficient $t$-clique]
Let $K_t$ be a $t$-clique in $K_f$ with two designated vertices $x$ and $x'$.  The clique $K_t$ is said to be efficient if there exist  $t-1$ paths between $x$ and $x'$ in  $C$ which have distinct weights. We denote this structure by $xK_tx'$.
\end{defi}

\begin{defi}[$t$-connector]\label{DEF:Hconnector} 
 A  $t$-connector is a sequence formed by concatenating  efficient $t$-cliques, denoted as $x_1K_tx_{2}K_tx_3\dots x_{s}K_tx_{s+1}$, where each $x_iK_tx_{i+1}$ is an efficient $t$-clique. In this sequence, any two consecutive cliques $x_{i-1}K_tx_{i}$ and $x_iK_tx_{i+1}$ intersect at exactly one vertex  $x_i$, while any two cliques $x_{i}K_tx_{i + 1}$ and $x_jK_tx_{j+1}$ are disjoint if $|i-j|\ge 2$.
The vertices $x_1$ and $x_{s+1}$ are called the endpoints of the connector. 
The path $P=x_1x_2\dotsm x_sx_{s + 1}$ is called the base path. 
\end{defi}

\begin{defi}[$S$-connector]\label{DEF:connector} An efficient $xK_3x'$ with vertex set $\{x, y, x'\}$ is referred to $c$-efficient if $\omega(xy)+\omega(yx')-\omega(xx')=c$ (note that $c\not=0$ by the definition of the efficient $3$-clique). For convenience, we also write this as a $c$-efficient triple $(x,y,x')$.
A  $3$-connector $x_1K_3x_2K_3x_3\dots x_sK_3x_{s+1}$ with each $x_iK_3x_{i+1}$ being $c_i$-efficient
is also called an $S$-connector for $S \subseteq \sum_{i = 1}^s\{0,c_i\}$.

\end{defi}

 \begin{obs}\label{OBS:switching}
 If we have an $S$-connector with base path $P=x_1x_2\dotsm x_sx_{s + 1}$ as described in Definition~\ref{DEF:connector}, then for every $c_i\in S$, by switching  the two  paths of distinct weights between  $x_i$ and $x_{i + 1}$, we can obtain a path $Q$ of weight $\omega(Q) = \omega(P) + c_i$. Consequently, for every $c \in S$, one can obtain a path $Q$ with weight $\omega(Q) = \omega(P) + c$ through a sequence of such suitable  switches.

 \end{obs}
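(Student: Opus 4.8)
The plan is to reduce the statement to a purely additive bookkeeping over the segments of the base path, with the only genuine content being the verification that simultaneous switches preserve the property of being a simple path.

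First I would unpack the definitions. In the $S$-connector, write the $i$-th efficient triple as $(x_i, y_i, x_{i+1})$, so that its vertex set is $\{x_i, y_i, x_{i+1}\}$ and, by the definition of $c_i$-efficiency,
$$\omega(x_i y_i) + \omega(y_i x_{i+1}) = \omega(x_i x_{i+1}) + c_i.$$
The base path $P = x_1 x_2 \dotsm x_s x_{s+1}$ traverses each clique along its direct edge $x_i x_{i+1}$, so $\omega(P) = \sum_{i=1}^{s}\omega(x_i x_{i+1})$. Between $x_i$ and $x_{i+1}$ there are exactly the two paths of distinct weight guaranteed by efficiency: the direct edge, and the detour $x_i y_i x_{i+1}$, whose weights differ by exactly $c_i$. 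This immediately yields the single-switch claim: replacing the edge $x_i x_{i+1}$ in $P$ by the detour $x_i y_i x_{i+1}$ produces a path of weight $\omega(P) + c_i$.

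For the general statement, fix $c \in S$. Since $S \subseteq \sum_{i=1}^{s}\{0, c_i\}$, there is a subset $I \subseteq \{1, \dots, s\}$ with $c = \sum_{i \in I} c_i$. I would form $Q$ from $P$ by performing the detour switch on exactly those cliques indexed by $I$: on each segment with $i \in I$ replace $x_i x_{i+1}$ by $x_i y_i x_{i+1}$, and keep the direct edge on every segment with $i \notin I$.

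The main (and essentially only) obstacle is checking that $Q$ is a legitimate simple path from $x_1$ to $x_{s+1}$. This is where the intersection structure from Definition~\ref{DEF:Hconnector} is used: consecutive cliques $x_{i-1}K_3 x_i$ and $x_i K_3 x_{i+1}$ meet only in the single base vertex $x_i$, while cliques at distance at least two are disjoint. Consequently the middle vertices $y_i$ are pairwise distinct, distinct from every base vertex $x_j$, and each $y_i$ lies in no clique other than the $i$-th; hence the chosen segments (direct edge or detour) are internally vertex-disjoint and share only their common endpoints $x_i$, so their concatenation in order $i = 1, \dots, s$ is a simple path. Finally, weights add over these edge-disjoint segments, giving
$$\omega(Q) = \sum_{i \notin I}\omega(x_i x_{i+1}) + \sum_{i \in I}\bigl(\omega(x_i x_{i+1}) + c_i\bigr) = \omega(P) + \sum_{i \in I} c_i = \omega(P) + c,$$
which is exactly the desired conclusion, realized by the sequence of switches determined by $I$.
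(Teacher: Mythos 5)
Your proof is correct and takes exactly the route the paper intends: the paper states this as an Observation to be verified directly from the definitions, and your argument (a single switch at clique $i$ replaces the edge $x_ix_{i+1}$ by the detour $x_iy_ix_{i+1}$ and changes the weight by $c_i$; then writing $c=\sum_{i\in I}c_i$ for a subset $I$ and switching exactly those cliques) is precisely that verification. Your additional check that the disjointness conditions in the connector definition make the resulting $Q$ a genuine simple path is a detail the paper leaves implicit, and it is handled correctly.
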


\begin{thm}[The Cauchy-Davenport Theorem, see \cite{Daven}]\label{THM: CDT}
    Let $p$ be a prime and let $A,B\subseteq \mathbb{Z}_p$, then  $|A + B| \ge \min\{p,|A| + |B| -1\}$.
\end{thm}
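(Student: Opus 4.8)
The plan is to prove this classical fact by the elementary $e$-transform argument of Davenport, inducting downward until the smaller of the two sets becomes a singleton. First I would dispose of the easy regime $|A|+|B|>p$: for an arbitrary target $c\in\mathbb{Z}_p$, the sets $A$ and $c-B=\{c-b:b\in B\}$ have sizes summing to more than $p$, so they must meet; an element $a=c-b$ of their intersection gives $c=a+b\in A+B$. Thus $A+B=\mathbb{Z}_p$ and the bound holds with equality. So I may assume $|A|+|B|\le p$ and the remaining task is to establish $|A+B|\ge |A|+|B|-1$.

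I would then induct on $\min\{|A|,|B|\}$, which after a possible swap I take to be $|A|$. The base case $|A|=1$ is immediate, since $A=\{a\}$ gives $A+B=a+B$ with exactly $|B|=|A|+|B|-1$ elements. For the inductive step with $|A|\ge 2$, I introduce, for a parameter $e\in\mathbb{Z}_p$, the transformed pair
\[
A_e=A\cap(B+e),\qquad B_e=B\cup(A-e).
\]
The two routine facts to verify by direct inspection are: (i) $A_e+B_e\subseteq A+B$, checked by splitting an element of $A_e+B_e$ according to which part of each union its summands come from (the mixed case uses $x=b+e$ and $y=a-e$, so $x+y=a+b$); and (ii) $|A_e|+|B_e|=|A|+|B|$, which follows because the translation $z\mapsto z+e$ gives a bijection $B\cap(A-e)\to A\cap(B+e)$, so $|B\cap(A-e)|=|A_e|$ and hence $|B_e|=|B|+|A|-|A_e|$. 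Since $A_e\subseteq A$ and $B_e\supseteq B$, the pair $(A_e,B_e)$ has $A_e$ as its smaller set, and if I can arrange $1\le|A_e|\le|A|-1$ then the induction hypothesis applied to $(A_e,B_e)$ yields $|A+B|\ge|A_e+B_e|\ge|A_e|+|B_e|-1=|A|+|B|-1$, as wanted.

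The crux, and the step I expect to be the main obstacle, is showing that a parameter $e$ with $1\le|A\cap(B+e)|\le|A|-1$ always exists. I would argue by contradiction: if no such $e$ exists, then $|A\cap(B+e)|\in\{0,|A|\}$ for every $e$. Fixing $a_0\in A$ and taking $e=a_0-b$ for each $b\in B$ forces $a_0\in A\cap(B+e)$, hence $A\subseteq B+e$, which rearranges to $(A-a_0)+b\subseteq B$; letting this range over all $b\in B$ gives $A'+B\subseteq B$ with $A'=A-a_0\ni 0$, and since $0\in A'$ this upgrades to $A'+B=B$. Then $B$ is invariant under the subgroup $\langle A'\rangle$, but $p$ prime together with $|A'|=|A|\ge 2$ (so $A'$ has a nonzero element) forces $\langle A'\rangle=\mathbb{Z}_p$ and thus $B=\mathbb{Z}_p$, contradicting $|B|\le p-|A|\le p-2<p$. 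This rules out the degenerate case, so the desired $e$ exists and the induction closes. (A self-contained alternative would be the polynomial method, applying the Combinatorial Nullstellensatz to $\prod_{c\in C}(x+y-c)$ for a size-$(|A|+|B|-2)$ superset $C$ of $A+B$ and reading off the nonzero coefficient $\binom{|A|+|B|-2}{|A|-1}\not\equiv 0\pmod p$; I would default to the transform proof as it avoids that extra machinery.)
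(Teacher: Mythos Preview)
Your argument via Davenport's $e$-transform is correct and is indeed the classical proof. However, note that the paper does not supply its own proof of this statement: the Cauchy--Davenport Theorem is simply quoted with a citation to Davenport's 1935 paper and then applied (for instance in Observation~\ref{Obs: K4-connector} and inside the proof of Theorem~\ref{primeorder}). There is therefore nothing to compare against; you have filled in a standard proof that the authors chose to omit as well known.

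Two minor remarks on presentation. First, the theorem as stated allows $A$ or $B$ empty, in which case the inequality can fail; your proof implicitly assumes both are nonempty (the base case starts at $|A|=1$), which is also how the paper uses the result, so this is harmless but worth a word. Second, in the invariance step you might streamline slightly: once $A'+B=B$ with $A'\ni 0$ and some $a'\ne 0$, the single translation $a'+B=B$ already forces $B$ to be a union of cosets of $\langle a'\rangle=\mathbb{Z}_p$, so $B=\mathbb{Z}_p$; invoking the full subgroup $\langle A'\rangle$ is not needed.
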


By the Cauchy-Davenport Theorem, we have the following observation.
\begin{obs}\label{Obs: K4-connector}
If $A = \mathbb{Z}_p$,  a  $4$-connector $X$ with $3s + 1$ vertices has at least $\min\{2s + 1,p\}$ paths between the endpoints $x_1$ and $x_{s + 1}$ with distinct weights. Let $R(X)$ be the set of the weights of the paths between $x_1$ and $x_{s + 1}$ in $X$.     
\end{obs}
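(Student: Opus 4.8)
The plan is to reduce the statement to an iterated application of the Cauchy--Davenport Theorem (Theorem~\ref{THM: CDT}). First I would unpack the vertex count. Since in a $4$-connector consecutive efficient $4$-cliques share exactly one vertex while non-consecutive cliques are disjoint, a $4$-connector on $3s+1$ vertices is precisely a concatenation $x_1K_4x_2K_4\dots x_sK_4x_{s+1}$ of $s$ efficient $4$-cliques: each clique contributes $4$ vertices and exactly $s-1$ vertices are identified, giving $4s-(s-1)=3s+1$.

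Next I would describe how an endpoint-to-endpoint path decomposes. Because distinct cliques meet only at the base-path vertices $x_i$, every path from $x_1$ to $x_{s+1}$ in $X$ is obtained by choosing, independently for each $i$, one of the prescribed $x_i$--$x_{i+1}$ paths inside the $i$-th clique and concatenating these segments; its weight is the sum of the chosen segment weights. Hence, letting $W_i\subseteq\mathbb{Z}_p$ denote the set of weights realized by the $x_i$--$x_{i+1}$ paths witnessing efficiency of the $i$-th clique, the set $R(X)$ of realizable weights contains the sumset $W_1+W_2+\dots+W_s$. By the definition of an efficient $4$-clique (there are $t-1=3$ paths of pairwise distinct weights), each $W_i$ may be taken with $|W_i|\ge 3$.

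Then I would apply Cauchy--Davenport $s-1$ times. A straightforward induction on $s$ from Theorem~\ref{THM: CDT} yields
\[
|W_1+W_2+\dots+W_s|\ \ge\ \min\Bigl\{p,\ \sum_{i=1}^s|W_i|-(s-1)\Bigr\}.
\]
Substituting $|W_i|\ge 3$ gives $\sum_{i=1}^s|W_i|-(s-1)\ge 3s-(s-1)=2s+1$, so $|R(X)|\ge|W_1+\dots+W_s|\ge\min\{2s+1,p\}$, which is exactly the claimed number of pairwise-distinct path weights.

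The argument is essentially immediate once the correct sumset is identified, so there is no serious obstacle; the only point needing care is the \emph{independence} of the segment choices across cliques, namely that selecting a path inside one clique does not constrain the choices in the others. This is precisely what the connector structure guarantees, since cliques overlap only at single base-path vertices, and it is what legitimizes the sumset description. A minor routine point is the inductive step for the iterated Cauchy--Davenport bound, where one uses that once a partial sumset attains size $p$ it can grow no further, matching the $\min$ with $p$.
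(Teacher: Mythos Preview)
Your proposal is correct and matches the paper's approach: the paper itself merely states that the observation follows from the Cauchy--Davenport Theorem, and your argument supplies exactly the intended details by writing $R(X)\supseteq W_1+\cdots+W_s$ with $|W_i|\ge 3$ and applying Theorem~\ref{THM: CDT} iteratively. Your care in justifying the independence of segment choices (via the disjointness of non-consecutive cliques) is appropriate and completes what the paper leaves implicit.
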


The following definition comes from \cite{SNR}. 
\begin{defi}[$B$-restricted $K_f$]
Let $B \le A$. The edge-weighted complete graph $K_f$ is said to be $B$-restricted if it satisfies the following two conditions: 

    (1)  $2\omega(e) \in B$ for every edge $e \in E(K_f)$;
    
    (2) Whenever $xK_3y$ is a $c$-efficient  triangle, $c \in B$.  
\end{defi}

Note that if $G$ is $B$-restricted, then any subgraph of $G$ is $B$-restricted too.

\begin{lem}\label{res:cycle}
   If $K_f$ is $B$-restricted, then the weight of every cycle in $K_f$ is in $B$.
\end{lem}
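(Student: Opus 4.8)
The plan is to argue by induction on the length of the cycle, after first disposing of triangles as the base case.

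First I would show that every triangle in $K_f$ has weight in $B$. Consider a triangle on vertices $\{x, y, z\}$; regarding $x$ and $z$ as the designated endpoints and $y$ as the internal vertex, set $c = \omega(xy) + \omega(yz) - \omega(xz)$. The weight of this triangle as a cycle is
$$\omega(xy) + \omega(yz) + \omega(zx) = c + 2\omega(xz).$$
If $c \neq 0$, then the triangle is $c$-efficient, so $c \in B$ by condition (2) of the definition of $B$-restricted; if $c = 0$, then $c \in B$ trivially. In either case condition (1) gives $2\omega(xz) \in B$, so the triangle weight lies in $B$.

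For the inductive step, let $C = v_1 v_2 \cdots v_k v_1$ be a cycle with $k \geq 4$. Since $K_f$ is complete, the chord $v_1 v_3$ is present, and it splits $C$ into the triangle $T = v_1 v_2 v_3 v_1$ and the shorter cycle $C' = v_1 v_3 v_4 \cdots v_k v_1$ of length $k - 1$. Summing edge weights,
$$\omega(T) + \omega(C') = \omega(C) + 2\omega(v_1 v_3),$$
since the chord $v_1 v_3$ is traversed once in $T$ and once in $C'$ but does not appear in $C$. Hence $\omega(C) = \omega(T) + \omega(C') - 2\omega(v_1 v_3)$. By the base case $\omega(T) \in B$; by the induction hypothesis $\omega(C') \in B$ (as $C'$ is a cycle in the $B$-restricted graph $K_f$); and $2\omega(v_1 v_3) \in B$ by condition (1). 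Since $B$ is a subgroup, $\omega(C) \in B$.

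The only point requiring care, and the one I expect to be the main (albeit minor) obstacle, is the base-case bookkeeping: condition (2) supplies $c \in B$ only for genuinely efficient triangles, i.e. those with $c \neq 0$, so one must separately observe that a degenerate triangle with $c = 0$ still has weight in $B$ through condition (1). Everything else is the standard chord decomposition of a cycle into triangles, with the correction term $2\omega(e)$ for each chord absorbed by condition (1). As an alternative to induction, one could instead triangulate the whole $k$-gon at once and express $\omega(C)$ as the sum of the $k - 2$ triangle weights minus twice the sum of the interior chord weights; this avoids the recursion but relies on exactly the same two ingredients.
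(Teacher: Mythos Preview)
Your proof is correct and follows essentially the same route as the paper's: induct on cycle length, handle triangles as the base case, and in the step use a chord to split the cycle, with the $2\omega(e)$ correction absorbed by condition~(1). The only cosmetic differences are that the paper proves the triangle case by summing the three ``$c$-efficient'' relations rather than your single-relation-plus-$2\omega(xz)$ argument, and in the step the paper allows an arbitrary chord (splitting into two shorter cycles) rather than fixing the chord $v_1v_3$.
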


\begin{proof}
    We proceed by induction. Note that $0\in B$ as well. First consider a triangle $xyz$; we have 
    \begin{align*}
        \omega(xy) + \omega(yz) - \omega(xz)\in B,\\
        \omega(xz) + \omega(yz) - \omega(xy)\in B,\\
        \omega(xy) + \omega(xz) - \omega(yz)\in B.
    \end{align*}
   Therefore, the sum of the left-hand sides, $\omega(xy) + \omega(yz) + \omega(xz)$, also  in $B$. Suppose all cycles of length  at most $k\ge 3$ have weights in $B$. Now consider a cycle of length $k + 1$ with total weight $a$.  Take any chord of weight $c$ in this cycle; it decomposes the cycle into two smaller cycles with weights $c_1\in B$ and $c_2\in B$, respectively. Since $2c\in B$, we have $a = c_1 + c_2 - 2c\in B$. Hence, every cycle in $K_f$ has a weight in $B$. 
\end{proof}

Let $P$ be a path with ends $x$ and $y$.  For two additional vertices $u$ and $v$, we define $uPv$ as the path formed by joining $u$ to $x$ and $v$ to $y$, i.e., $V(uPv) = V(P) \cup \{u, v\}$ and $E(uPv) = E(P) \cup \{xu, yv\}$.

\begin{lem}\label{zeropath}
Suppose $B\le A$.  If $K_f$ is $B$-restricted and $F$ is a $B$-connector, then for every edge $uv$ in $K_f - F$ with $\omega(uv) \in B$, there exists a zero-weight path between $u$ and $v$ whose internal vertices all lie in $F$. 
\end{lem}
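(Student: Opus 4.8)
The plan is to route a path from $u$ to $v$ through the connector $F$ and exploit the weight flexibility of a $B$-connector to tune the total weight to exactly $0$. Write $F = x_1 K_3 x_2 \cdots x_s K_3 x_{s+1}$ with endpoints $a := x_1$ and $b := x_{s+1}$ and base path $P = x_1 x_2 \cdots x_{s+1}$. Since $uv \in E(K_f - F)$, both $u$ and $v$ lie outside $V(F)$, so for any $a$--$b$ path $Q$ inside $F$ the concatenation $uQv$ (in the notation introduced just before the lemma) is a genuine $u$--$v$ path whose internal vertices all lie in $F$. Its weight is $\omega(uQv) = \omega(ua) + \omega(Q) + \omega(bv)$, so it suffices to produce an $a$--$b$ path $Q$ in $F$ with $\omega(Q) = -\omega(ua) - \omega(bv)$.

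This is where the two hypotheses combine. Because $F$ is a $B$-connector, Observation~\ref{OBS:switching} guarantees that, for every $c \in B$, switching yields an $a$--$b$ path of weight $\omega(P) + c$; hence an $a$--$b$ path $Q$ with the desired weight exists as soon as $-\omega(ua) - \omega(bv) \in \omega(P) + B$, equivalently as soon as $\omega(ua) + \omega(P) + \omega(bv) \in B$. To verify this membership, consider the cycle $C = u\,x_1 x_2 \cdots x_{s+1}\, v\, u$ obtained by prepending the edge $ua$ and appending the edges $bv$ and $vu$ to the base path $P$; this is a bona fide cycle precisely because $u, v \notin V(F)$ and the $x_i$ are distinct. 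By Lemma~\ref{res:cycle}, the $B$-restrictedness of $K_f$ forces $\omega(C) \in B$, and since $\omega(C) = \omega(ua) + \omega(P) + \omega(bv) + \omega(uv)$ with $\omega(uv) \in B$ by assumption, subtracting yields $\omega(ua) + \omega(P) + \omega(bv) \in B$, as required.

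It then remains only to assemble the pieces: setting $c := -\omega(ua) - \omega(bv) - \omega(P) \in B$ and applying Observation~\ref{OBS:switching} produces an $a$--$b$ path $Q$ in $F$ with $\omega(Q) = \omega(P) + c = -\omega(ua) - \omega(bv)$, so that $uQv$ is a zero-weight $u$--$v$ path all of whose internal vertices lie in $F$.

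I do not anticipate a serious obstacle; the whole argument rests on the two stated tools and is essentially careful weight bookkeeping. The single point that genuinely requires both hypotheses — and is thus the conceptual heart — is showing that the correction term $\omega(ua) + \omega(P) + \omega(bv)$ lands in $B$: this is exactly where $B$-restrictedness (via the cycle-weight Lemma~\ref{res:cycle}) and the assumption $\omega(uv) \in B$ enter, and it is what makes the switching flexibility of the $B$-connector sufficient to hit the target weight rather than merely helpful. The only care points are confirming that $C$ is a genuine cycle and that $u, v$ avoid $V(F)$, both immediate from $uv \in E(K_f - F)$.
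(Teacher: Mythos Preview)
Your proof is correct and follows essentially the same approach as the paper: form the cycle $uPvu$ through the base path, invoke Lemma~\ref{res:cycle} to place its weight in $B$, subtract $\omega(uv)\in B$ to see that $\omega(uPv)\in B$, and then apply Observation~\ref{OBS:switching} to switch to a path $Q$ with $\omega(uQv)=0$. Your write-up is somewhat more explicit about the bookkeeping, but the argument is the same.
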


\begin{proof}
   Let $x,y$ be the endpoints of $F$.  For the base path $P$ between $x$ and $y$ in $F$,  the cycle $uPvu$ has weight in $B$ since $K_f$ is $B$-restricted by Lemma \ref{res:cycle}. Therefore,  $uPv$ has  weight $b\in B$, given that $\omega(uv)\in B$. By Observation~\ref{OBS:switching},  we can obtain a new path $Q$ between $x$ and $y$ within $F$ with $\omega(Q) = \omega(P) - b$ as $-b\in B$. Therefore, the path $uQv$ has weight $\omega(ux)+\omega(Q)+\omega(yv)=\omega(uPv)-b=0$, which is a desired zero-weight path. 
\end{proof}
Let $uFv$ denote a zero-weight path stated in the above lemma. 

\begin{lem}\label{local}
 Fix a vertex $v$ in $K_f$. If there exists no $c$-efficient  $vK_3x$ in $K_f$, then 
define $B = \{0\}$. Otherwise, define
 \[
 B = \langle\{c | \text{ $(v,x,y)$ is a $c$-efficient triple, $x,y\in V(K_f)\setminus\{v\}$}\}\rangle.
 \]
  Then $K_f - v$ is $B$-restricted. 
\end{lem}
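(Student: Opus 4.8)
The plan is to verify the two defining conditions of a $B$-restricted graph directly for $G = K_f - v$, by writing each relevant quantity as an integer combination of values of triples that have $v$ as an endpoint. Every such value lies in $B$, either as a generator (if nonzero) or as $0\in B$, so the whole argument reduces to two short telescoping identities together with the fact that $B$ is a subgroup.

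First I would fix notation. For distinct $x,y,z\in V(K_f)\setminus\{v\}$, set $a=\omega(vx)$, $b=\omega(vy)$, $d=\omega(vz)$, $e=\omega(xy)$, $f=\omega(xz)$, $g=\omega(yz)$. The underlying observation is that for any ordered pair $x,y$ in $G$, both $(v,x,y)$ and $(v,y,x)$ are triples with $v$ as an endpoint, so their values $\omega(vx)+\omega(xy)-\omega(vy)=a+e-b$ and $\omega(vy)+\omega(yx)-\omega(vx)=b+e-a$ each belong to $B$: if nonzero they are generators of $B$, and if zero they equal $0\in B$. (In the degenerate case $B=\{0\}$, the hypothesis that no $c$-efficient $vK_3x$ exists forces every such value to vanish, so the conclusion still holds.) For condition~(1), summing these two values telescopes to $(a+e-b)+(b+e-a)=2e=2\omega(xy)$, which shows $2\omega(xy)\in B$ for every edge $xy$ of $G$.

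For condition~(2), suppose $xK_3y$ is a $c'$-efficient triangle in $G$ with middle vertex $z$, so that $c'=\omega(xz)+\omega(zy)-\omega(xy)=f+g-e$. The crux is to express $c'$ through triples anchored at $v$: using $(v,x,z)$, $(v,z,y)$, and $(v,x,y)$, whose values are $a+f-d$, $d+g-b$, and $a+e-b$ respectively, one checks the telescoping identity $(a+f-d)+(d+g-b)-(a+e-b)=f+g-e=c'$. Since each of the three values lies in $B$ and $B$ is closed under addition and negation, we conclude $c'\in B$, which is exactly condition~(2).

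I do not expect a serious obstacle here: the subgroup property of $B$ together with $0\in B$ lets me treat zero and nonzero triple-values on an equal footing, so the only genuine content is spotting the identity $c'=(v,x,z)+(v,z,y)-(v,x,y)$, which reduces a triangle avoiding $v$ to three triangles passing through $v$. If anything requires care, it is merely bookkeeping of which vertex plays the role of the middle vertex in each triple (so that the displayed values are the correct ones) and confirming that the degenerate branch $B=\{0\}$ is subsumed by the same computation.
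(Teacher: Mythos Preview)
Your proposal is correct and follows essentially the same approach as the paper: both arguments anchor every triangle at $v$ and verify conditions~(1) and~(2) of $B$-restrictedness by telescoping the values of $v$-based triples. The only cosmetic difference is in condition~(2): the paper sums three $v$-anchored triple values to obtain the full cycle weight $\omega(xy)+\omega(yz)+\omega(xz)\in B$ and then subtracts $2\omega(xz)$ (invoking part~(1)), whereas you take the combination $(v,x,z)+(v,z,y)-(v,x,y)$ to land on $c'$ directly---a slightly cleaner one-step telescoping, but the same idea.
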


\begin{proof}
    For every edge $xy$ in $K_f - v$, if $(v,x, y)$ forms a $c$-efficient triple, then $\omega(vx) + \omega(xy) - \omega(vy)=c\in B$; otherwise, $\omega(vx) + \omega(xy) - \omega(vy)=0\in B$. 
 
    (1) For every edge $e=xy$ in $K_f - v$, by the symmetry of $x$ and $y$, we have 
    \[
    \omega(vx) + \omega(xy) - \omega(vy)\in B,  \text{ and }    \omega(vy) + \omega(xy) - \omega(vx)\in B. \]
    Therefore, the sum $2\omega(xy)\in B$. 

    (2) For every triple $(x,y,z)$ in $K_f - v$, we also have 
    \[
    \omega(vx) + \omega(xy) - \omega(vy)\in B,  \,    \omega(vy) + \omega(yz) - \omega(vz)\in B, \text{ and }\omega(vz) + \omega(xz) - \omega(vx)\in B.
    \]
    Therefore, the sum $\omega(xy) + \omega(yz) + \omega(xz)\in B$. By (1), we have $2\omega(xz)\in B$. Thus, $\omega(xy) + \omega(yz) - \omega(xz)\in B$.

    By (1) and (2), we have $K_f - v$ is $B$-restricted. 
\end{proof}

There exist four possible types $T_1$, $T_2$, $T_3$, $T_4$ of triangles within a $\mathbb{Z}_2$-edge-weighted graph $K_f$, as shown in Figure~\ref{Fig:triangletypes}. Let $T_i(u,v,w)$ $(1\le i\le 4)$ denote the corresponding triangle induced by vertices $u,v,w$ in a $\mathbb{Z}_2$-edge-weighted graph.  

\begin{figure}[ht]
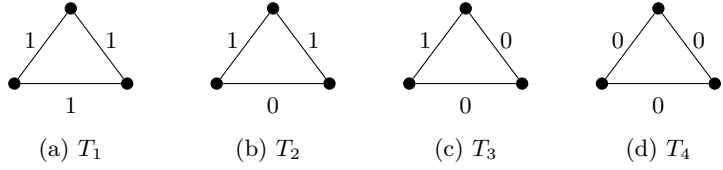

    \centering
    \subfloat[$T_1$]{\trif} \qquad \subfloat[$T_2$]{\tris} \qquad \subfloat[$T_3$]{\trit} \qquad \subfloat[$T_4$]{\trifo} 
    \caption{Four possible types of triangles within a $\mathbb{Z}_2$-edge-weighted graph.}\label{Fig:triangletypes}
\end{figure}

It should be noted that the vertices from $T_1$ or $T_3$ with any order form a $1$-efficient triple. When a $\mathbb{Z}_2$-edge-weighted $K_f$ contains only $T_2$ and $T_4$, then $K_f$ is $\{0\}$-restricted. The following observation can be directly verified from Observation~\ref{OBS:switching}.
\begin{obs}\label{spetri}
    For any $Z_2$-edge-weighted  $K_f$, if it contains an induced subgraph $T$ which is either $T_1$ or $T_3$, then for every pair of vertices $u,v\in V(K_f)$, there exists a zero-weight $uv$-path whose internal vertices lie entirely within $T$.
\end{obs}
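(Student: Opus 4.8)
The plan is to exploit the fact, noted just before the statement, that a $T_1$ or $T_3$ triangle $T$ on vertices $\{a,b,c\}$ forms a $1$-efficient triple under every ordering of its vertices. Concretely, for any two distinct vertices $p,q$ of $T$, the one-edge path $pq$ and the two-edge path through the third vertex have weights whose difference is exactly the efficiency constant, which in $\mathbb{Z}_2$ equals $1$. Thus between any two vertices of $T$ there are two paths inside $T$ of opposite parity. Viewing $T$ as the single efficient $3$-clique $pK_3q$, this is precisely the statement that $T$ is a $\{0,1\}$-connector with base path $pq$, so Observation~\ref{OBS:switching} furnishes $pq$-paths inside $T$ of weight $\omega(pq)$ and of weight $\omega(pq)+1$.

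Given this, I would fix a pair of distinct vertices $u,v$ and route a $uv$-path through $T$ using two designated ports $p,q\in\{a,b,c\}$, attaching $u$ to $p$ and $v$ to $q$ via the edges $up$ and $qv$ (which exist since $K_f$ is complete) and inserting between them one of the two $pq$-paths supplied above. Since the two inner paths differ by $1$ while the attaching edges contribute a fixed amount, exactly one choice of inner path makes the total weight $\equiv 0 \pmod 2$. This yields a zero-weight $uv$-path whose internal vertices all lie in $T$.

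The only point needing care is the position of $u,v$ relative to $T$, which I would split into three cases. If $u,v\notin T$, pick any two ports $p,q$ and proceed as above. If exactly one of them, say $u$, lies in $T$, take $p=u$ so that no attaching edge is needed on that side and let $q$ be either of the remaining two vertices of $T$; then the two inner $pq$-paths are already $uv$-paths differing by $1$. If both lie in $T$, take $p=u$ and $q=v$, so the edge $uv$ and the detour through the third vertex are the two candidate paths. In each case the object built is a genuine path (all vertices distinct, even when $u$ or $v$ coincides with a vertex of $T$) with internal vertices in $T$, and one of the two parity choices gives weight $0$. I do not anticipate a serious obstacle: the statement is essentially a one-triangle instance of the switching principle of Observation~\ref{OBS:switching} specialized to $\mathbb{Z}_2$, where ``differs by $1$'' forces one of the two candidate paths to be even, and the remaining work is only the routine bookkeeping of the three positional cases.
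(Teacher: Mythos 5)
Your proof is correct and follows essentially the same route as the paper, which simply notes that any ordering of the vertices of $T_1$ or $T_3$ forms a $1$-efficient triple and then invokes Observation~\ref{OBS:switching} to switch between the two inner paths of opposite parity. Your write-up merely makes explicit the routine case analysis (both, one, or neither of $u,v$ in $T$) that the paper leaves as a direct verification.
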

For every pair of vertices $(u, v)$ and each triangle $T \in \{T_1, T_3\}$ in $K_f$, let $uTv$ denote the zero-weight path between $u$ and $v$ given by Observation \ref{spetri}. 
\begin{lem}\label{bipar}
    Let $f\ge 3$ be a positive integer. For every $\mathbb{Z}_2$-edge-weighted complete graph $K_f$ that does not contain a triangle isomorphic to either $T_1$ or $T_3$, there exists a partition $V_1,V_2$ of $V(K_f)$ such that:
\begin{itemize}[leftmargin = \parindent,nosep]
    \item Every edge between $V_1$ and $V_2$ has weight 1.
    \item Every edge within $V_1$ or within $V_2$ has weight 0.
\end{itemize}   
\end{lem}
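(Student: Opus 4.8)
The plan is to define the partition through the weights of edges incident to a fixed vertex, and then verify consistency using the hypothesis that no $T_1$ or $T_3$ triangle exists. Recall from the observation preceding the lemma that the only triangle types are $T_2$ and $T_3$.

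Let me reconsider. Only $T_2$ and $T_4$ survive.

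First I would fix an arbitrary vertex $v_0 \in V(K_f)$ and define the partition by the rule $V_1 = \{v_0\} \cup \{u : \omega(v_0 u) = 0\}$ and $V_2 = \{u : \omega(v_0 u) = 1\}$. The goal is to show this partition has the two stated properties, using only the fact that every triangle is of type $T_2$ or $T_4$. Recall that $T_4$ is the all-$0$ triangle and $T_2$ has exactly two edges of weight $1$ and one of weight $0$; equivalently, the forbidden triangles $T_1$ (all $1$) and $T_3$ (exactly one edge of weight $1$) are precisely those with an odd number of weight-$1$ edges. Thus the surviving hypothesis is clean: \emph{every triangle in $K_f$ has an even number of edges of weight $1$}.

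\begin{proof}[Proof sketch]
Assume $f \ge 3$ and that $K_f$ contains no triangle isomorphic to $T_1$ or $T_3$. As noted above, this means every triangle has an even number of weight-$1$ edges. Fix a vertex $v_0$, and set
\[
V_1 = \{v_0\} \cup \{u \in V(K_f) : \omega(v_0 u) = 0\}, \qquad V_2 = \{u \in V(K_f) : \omega(v_0 u) = 1\}.
\]
We verify the two properties by examining, for each edge $uw$ with $u,w \neq v_0$, the triangle $v_0 u w$. Since this triangle has an even number of weight-$1$ edges, we get the key identity $\omega(uw) \equiv \omega(v_0 u) + \omega(v_0 w) \pmod 2$. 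Now if $u,w$ lie in the same part, then $\omega(v_0 u) = \omega(v_0 w)$, so $\omega(uw) = 0$; and an edge $v_0 u$ within $V_1$ has weight $0$ by definition, so all within-part edges have weight $0$. If instead $u \in V_1 \setminus \{v_0\}$ and $w \in V_2$ (or vice versa), then $\omega(v_0 u) = 0 \neq 1 = \omega(v_0 w)$, whence $\omega(uw) = 1$; and edges $v_0 w$ with $w \in V_2$ have weight $1$ by definition, so all crossing edges have weight $1$. This establishes both bullet points.
\end{proof}

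The argument is essentially a parity/coboundary computation, so there is no serious obstacle once the hypothesis is rephrased as the even-parity condition on triangles; the one point requiring care is bookkeeping the edges incident to the anchor vertex $v_0$ itself (handled by placing $v_0$ in $V_1$ and checking the defining weights directly) and confirming that the rephrasing of ``no $T_1$ or $T_3$'' as ``every triangle has an even number of weight-$1$ edges'' is exactly correct from Figure~\ref{Fig:triangletypes}. I expect the main subtlety, if any, to be purely notational rather than mathematical.
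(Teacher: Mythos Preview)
Your proof is correct but proceeds differently from the paper's. The paper treats the two forbidden triangle types separately: the absence of $T_3$ makes the relation ``$\omega(uv)=0$'' transitive, so the weight-$0$ edges partition $V(K_f)$ into cliques; then the absence of $T_1$ forces there to be at most two such cliques (three vertices in distinct cliques would form a $T_1$), which yields the bipartition. Your argument instead merges both hypotheses into the single parity statement ``every triangle has an even number of weight-$1$ edges'' and anchors the partition at a fixed vertex $v_0$, reading off every other edge weight via the identity $\omega(uw)\equiv\omega(v_0u)+\omega(v_0w)\pmod 2$. Your route is slightly slicker in that one observation does all the work and the partition is explicit; the paper's route has the minor advantage of making visible exactly which forbidden triangle is responsible for which structural feature. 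Both are short and complete. (Your opening typo---writing $T_3$ where you meant $T_4$---is self-corrected in the next line and does not affect the argument.)
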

\begin{proof}
    The absence of $T_3$-type triangle implies the following transitive property: Given any $u,v,w\in V(K_f)$,  $\omega(uv)=\omega(vw) = 0$ implies $\omega(uw)=0$. Consequently, the zero-weight edges induce a union of cliques. Moreover, the exclusion of $T_1$-type triangles limits the number of such maximal cliques to at most two, which immediately implies the result.
\end{proof}
Recall that a $\mathbb{Z}_2$-edge-weighted complete graph $K_f$ is $\{0\}$-restricted if and only if it contains neither $T_1$ nor $T_3$.

\begin{lem}\label{cliquezero}
   Let $A=\mathbb{Z}_q$, $B \le \mathbb{Z}_q$. Any $B$-restricted complete graph $K_{2n-1}$ must contain an $n$-clique in which every edge has a weight belonging to $B$.  
\end{lem}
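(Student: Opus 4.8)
The plan is to pass to the quotient group $G=\mathbb{Z}_q/B$ and show that, modulo $B$, the edge-weighting is completely governed by a vertex potential whose values lie in the $2$-torsion subgroup of $G$. Since $\mathbb{Z}_q$ is cyclic, so is $G$, and a cyclic group has at most one element of order two; hence the potential takes at most two values, and a pigeonhole argument on the $2n-1$ vertices produces a monochromatic class of size at least $n$, which will be the desired clique.

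First I would let $\bar\omega:E(K_{2n-1})\to G$ be the composition of $\omega$ with the quotient map $\mathbb{Z}_q\to G$, and translate the two axioms of a $B$-restricted graph into statements about $\bar\omega$. Condition (1) gives $2\bar\omega(e)=0$ for every edge $e$, so every value of $\bar\omega$ lies in the $2$-torsion subgroup $G[2]=\{g\in G:2g=0\}$. Condition (2) gives, for every triangle on vertices $u,v,w$ read with any chosen apex (the case $c=0$ being trivially in $B$), the cocycle identity $\bar\omega(uv)+\bar\omega(vw)-\bar\omega(uw)=0$ in $G$.

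Next I would fix a vertex $v_0$ and define a potential $\phi:V(K_{2n-1})\to G$ by $\phi(v_0)=0$ and $\phi(u)=\bar\omega(v_0u)$ for $u\ne v_0$. Applying the triangle identity to $v_0,u,w$ with $u$ as apex yields $\bar\omega(uw)=\phi(w)-\phi(u)$ for every pair $u,w$. Consequently $\bar\omega(uw)=0$, equivalently $\omega(uw)\in B$, holds exactly when $\phi(u)=\phi(w)$, so each fiber $\phi^{-1}(g)$ induces a clique all of whose edge weights lie in $B$. Moreover every value $\phi(u)=\bar\omega(v_0u)$ (and $\phi(v_0)=0$) lies in $G[2]$.

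Finally I would invoke cyclicity: because $G=\mathbb{Z}_q/B$ is cyclic, $G[2]$ has at most two elements, so $\phi$ partitions the $2n-1$ vertices into at most two classes. By pigeonhole one class has at least $\lceil (2n-1)/2\rceil=n$ vertices, and these form an $n$-clique with all edge weights in $B$. The only step needing genuine care is verifying that the two $B$-restricted axioms yield precisely this coboundary structure for $\bar\omega$; once the potential $\phi$ is available, the cyclicity of $\mathbb{Z}_q/B$, which forces $|G[2]|\le 2$, is exactly what makes $2n-1$ the sharp threshold.
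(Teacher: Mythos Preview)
Your argument is correct and is essentially the paper's proof in different clothing: both pass to the quotient by $B$, use condition~(1) to force the edge weights into the $2$-torsion (the paper phrases this as working in $B'/B$ where $B'=\{a:2a\in B\}$, which is exactly $G[2]$), and then exploit $|G[2]|\le 2$ together with pigeonhole on $2n-1$ vertices. The only cosmetic difference is that the paper obtains the two vertex classes by invoking Lemma~\ref{bipar} on the resulting $\{\bar 0\}$-restricted $\mathbb{Z}_2$-weighted graph, whereas your potential $\phi$ produces that same bipartition directly.
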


\begin{proof}
    Let $B' := \{a\in A \,|\, 2a\in B\}$. Since $K_{2n - 1}$ is $B$-restricted, we have that every edge $e$ satisfies $\omega(e)\in B'$.  Note that $B$ is a subgroup of $B'$, we can define the quotient group $B^* = B'/B$ and, we have $|B^*| = \frac{|B'|}{|B|}\le 2$ since $A = \mathbb{Z}_q$. If $|B^*| = 1$, then $B' = B$, we are done! Otherwise, let $B^* = \{\bar{0},\bar{1}\}$. Reassign every edge of $K_f$ with weight the corresponding value in $B^*$. Then, since $K_{2n-1}$ is $B$-restricted, it is also $\{\bar{0}\}$-restricted with the corresponding weights in $B^*$. By Lemma \ref{bipar}, there exists an $n$-clique in which every edge has weight $\bar{0}$. Hence, in the original $B$-restricted graph $K_{2n-1}$, there also exists an $n$-clique with all edge weights in $B$.
\end{proof}

\section{For general $q$}

First, we give a Lemma about how to construct a (3-)connector. 

\begin{lem}[Connector construction]
    Let   $B\le A$, and let $G$ be a $B$-restricted $A$-edge-weighted $K_f$. Then at least one of the following two claims holds:

    (1) $G$ contains a $B$-connector $F$ with at most $2|B| - 1$ vertices.

    (2) There is a proper subgroup $B' < B$ and $G' = K_{f'} \subseteq K_f$ such that $G'$ is $B'$-restricted, where $f'\ge f - (2|B| - 3)$.  
\end{lem}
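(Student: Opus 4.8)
The plan is to build the $B$-connector greedily, appending one efficient triangle at a time while tracking the realizable shift-set $S_j=\sum_{i=1}^{j}\{0,c_i\}\subseteq B$ after $j$ triangles, and to argue that either $S_j$ eventually exhausts $B$ (yielding claim (1)) or the construction stalls in a way that simultaneously exposes a proper subgroup $B'<B$ and a large $B'$-restricted subclique (yielding claim (2)). The trivial case $B=\{0\}$ is handled first: a single vertex is a $\{0\}$-connector with $1=2|B|-1$ vertices, so (1) holds; henceforth assume $|B|\ge 2$.

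The key algebraic device is the stabilizer $\mathrm{Stab}(S)=\{a\in A: a+S=S\}$, which is a subgroup of $A$. Because $0\in S_j$ we have $\mathrm{Stab}(S_j)\subseteq S_j$, and for any $c\in A$ the inclusion $S_j\subsetneq S_j+\{0,c\}$ holds if and only if $c\notin\mathrm{Stab}(S_j)$ (this is just the statement that $S_j+c\subseteq S_j$ forces $S_j+c=S_j$ by finiteness). I would initialize by picking any $x_1\in V(G)$, setting $S_0=\{0\}$ and letting $W_0=V(G)\setminus\{x_1\}$ be the unused vertices. At a generic stage, having built $j$ triangles forming a connector ending at $v:=x_{j+1}$ and using $2j+1$ vertices, I set $B'=\mathrm{Stab}(S_j)$ and ask: do there exist two unused vertices $y,x'\in W_j$ such that the triangle on $\{v,y,x'\}$ is $c$-efficient with $v$ and $x'$ as its designated endpoints and $c\notin B'$? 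If yes, I append it (set $x_{j+2}=x'$, $S_{j+1}=S_j+\{0,c\}$, and delete $y,x'$ from the unused set); by the stabilizer identity this guarantees $|S_{j+1}|>|S_j|$.

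Since $|S_0|=1$, each extension strictly increases $|S_j|$, and $S_j\subseteq B$, the process stops after $s\le|B|-1$ extensions, in one of two ways. If $S_s=B$, then since all $c_i\in B$ (as $G$ is $B$-restricted) we in fact have $\sum_{i=1}^{s}\{0,c_i\}=B$, so by Definition~\ref{DEF:connector} the construction is a genuine $B$-connector, and it has $2s+1\le 2|B|-1$ vertices: claim (1). Otherwise the process stalls at some step with $S_j\subsetneq B$, meaning every $c$-efficient triangle through $v$ with both other vertices in $W_j$ has jump in $B'$; applying Lemma~\ref{local} inside the subclique $K[W_j\cup\{v\}]$ then shows $G':=K[W_j]$ is $B'$-restricted. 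Here $B'=\mathrm{Stab}(S_j)\subseteq S_j\subsetneq B$, so $B'$ is a proper subgroup of $B$; and $|S_j|\ge j+1$ together with $|S_j|\le|B|-1$ forces $j\le|B|-2$, whence $f'=|W_j|=f-(2j+1)\ge f-(2|B|-3)$: claim (2).

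The main obstacle will be the stalling analysis: one must match the "no usable jump" condition precisely to the hypothesis of Lemma~\ref{local}, applied not to the whole $K_f$ but to the induced subclique on the surviving vertices together with $v$, and then verify both that the exposed stabilizer $B'$ is a \emph{proper} subgroup of $B$ (which rests on the containment $\mathrm{Stab}(S_j)\subseteq S_j\subsetneq B$) and that the number of consumed vertices is controlled by $|B|$ through the strict-growth bound $|S_j|\ge j+1$. The remaining bookkeeping—that consecutive triangles share only their common endpoint and that non-consecutive ones are disjoint—follows automatically since each new triangle draws its two fresh vertices from the unused set $W_j$.
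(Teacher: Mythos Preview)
Your proposal is correct and follows essentially the same greedy strategy as the paper: build the connector one efficient triangle at a time, force $|S_j|$ to grow strictly, and on stalling invoke Lemma~\ref{local} on the surviving clique to drop to a proper subgroup. The one cosmetic difference is your choice of $B'=\mathrm{Stab}(S_j)$ versus the paper's $B'=\langle\{c:(v,x,y)\text{ is $c$-efficient},\,x,y\in G_i\}\rangle$; since every available jump lies in $\mathrm{Stab}(S_j)$ at a stall, the paper's $B'$ is contained in yours, and $B'$-restrictedness is monotone in $B'$, so both choices work (you are implicitly using that monotonicity when you pass from Lemma~\ref{local}'s conclusion to ``$G'$ is $\mathrm{Stab}(S_j)$-restricted'').
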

\begin{proof}
    We may assume that $f > 2|B| - 1$, as the second claim is trivially satisfied if we otherwise let $G'$ be a trivial subgraph of $G$. We will endeavor to create the connector by identifying a succession of subsets \( S_0 \subsetneq S_1 \dotsb \subsetneq S_t = B \) for some \( t \le |B| - 1 \), and for each \( i \in [t] \), an \( S_i \)-connector including  \( 2i + 1 \) vertices. For each $0\le i \le t - 1$, the $S_{i + 1}$-connector will augment the previously established $S_i$-connector by incorporating  2 new vertices. 

    Define $S_0 = \{0\}$, starting with any vertex. Assume that for some $0 \leq i \leq t - 1$, we have established an $S_i$-connector with $S_i \subsetneq B$, and we will now attempt to construct an $S_{i + 1}$-connector. Examine the graph derived from $G$ by eliminating all vertices in the $S_{i}$-connector; denote this graph as $G_i$. 

    Let $x,y\in G_i$ such that $(v,x,y)$ constitutes a $c$-efficient triple, and $S_i + \{0,c\} \neq S_i$. We finalize the design of the $S_{i + 1}$-connector by including the triangle $vxy$ into the $S_i$-connector, defining $S_{i + 1} = S_i + \{0,c\}$. By iterating this method, we may get the desired $B$-connector after a maximum of $|B| - 1$ steps, utilizing at most $2|B| - 1$ vertices. Conversely, if an iteration is unsuccessful, it must occur $S_{i} = S_i + \{0,c\}$ for some $0\le i\le |B| - 2$. Thus, $\langle c\rangle\subseteq S_i$ as $0\in S_i$. If no two vertices 
$x,y\in G_i$ form a $c$-efficient triple $(v,x,y)$, define $B' = \{0\}$. Otherwise, define
 \[
 B' = \langle\{c\, |\, \text{$(v,x,y)$ is a $c$-efficient triple, $x,y\in G_i$}\}\rangle.
 \]
     Then $G_i$ is $B'$-restricted for $B' \subseteq S_i$ and $B' < B$ by Lemma \ref{local}. Clearly, $V(G_i) \ge f - (2i + 1) \ge f - (2|B| - 3)$.   
\end{proof}

Before presenting the proof of Theorem~\ref{generalq}, we restate it. 
\Generalq*
\begin{proof}
Let $G$ be a complete graph $K_f$  with $\omega: E(K_f)\to \mathbb{Z}_q$, where $f = (2q - 1)m + 2n - 1 + 4q$. The idea of the proof is to construct $m$ disjoint connectors in $G$, each including no more than $2q - 1$ vertices. We will attempt the subdivision paths through these connectors, allowing us to implement switches within the connectors to guarantee that each path of weight $0\in \mathbb{Z}_q$.

We begin by creating $\mathbb{Z}_q$-connectors for as long as possible. If, at any time, no additional connectors can be constructed, this will provide some structural insights into the edge weights in the residual graph. This information will enable us to transition to a subgroup $A' < \mathbb{Z}_q$, and we shall thereafter construct $A'$-connectors instead. This method is reiterated until the requisite number of connectors is achieved. We will utilize these alongside the remaining vertices to construct a $q$-divisible subdivision of $H$. 

We initially set $f_1 = f$, $B_1 = A$, and $G_1 = G$. For each iteration of our process, suppose  we have $i\ge 1$ and $f_i,B_i$ and $G_i$ such that $G_i\subseteq G$ is a $B_i$-restricted $\mathbb{Z}_q$-weighted $K_{f_i}$; we shall then find $f_{i + 1},B_{i + 1}$ and $G_{i + 1}$ as follows. 

(1) If we find a $B_i$-connector $F_i$ using at most $2|B_i| - 1$ vertices, then let $G_{i + 1}$ be the graph by removing the vertices in $B_i$-connector from $G_i$, set $B_{i + 1} = B_i$ and we let $f_{i + 1}$ be the number of the vertices in $G_{i + 1}$. Note that $G_{i + 1}$ is $B_{i + 1}$-restricted.

(2) If we obtain a proper group $B' < B_i$, and $G' = K_{f'}\subseteq K_{f_i}$ is $B'$-restricted graph, where $f'\ge f_i - (2|B_i| - 3)$, then we remain in the current iteration, but update $G_i = G'$, $B_i = B'$, and set $f_i = f'$. We repeat the process until we encounter the first case. 

In this inductive process, the size of the group $B_i$ we are working with shrinks by a factor of at least two each time we approach scenario (2). Case (2) is thus encountered at most $\lceil\log_2 q\rceil$ times. Furthermore,  the group $B_i$ has a maximum size of $2^{-j}q$ following $j$ instances of scenario (2). We can bound the total number of vertices lost in case (2) during the procedure by $\sum_{j\ge 0}^{\lceil\log_2 q\rceil}(2\cdot 2^{-j}q - 3) \le 4q$, since we lose no more than $2|B_i| - 3$ vertices each time we fall into case (2).  

We will finally have the needed $m$ connectors after meeting the first case $m$ times, as the number of occurrences of the second case is finite. By the time we reach the final graph $G_{m + 1}$, we still have at least $ f-m (2q- 1) - 4q\ge 2n - 1$ vertices, and for each $1\le i \le  m$ we have a $B_i$-connector $F_i\subseteq G_i$. 
There is an $n$-clique with all edges weighted in $B_m$ in $G_{m + 1}$ according to Lemma \ref{cliquezero}; select these $n$ vertices to be the branch vertices of a $q$-divisible subdivision of $H$.  Now, we begin to construct a $q$-divisible subdivision of $H$. For each $e_k = uv$ in the $n$-clique corresponding to the edge in $H$, we will need a zero-weight path that connects the two vertices and is distinct from one another. Assume that we have already built subdivision paths for $e_1,\dotsc,e_{k - 1}$ for some $k\in [m]$. For $e_k = uv$,  we have $\omega_{G_k}(e_k)\in B_m\le B_k$. By Lemma \ref{zeropath}, we establish a zero-weight path $uF_kv$, yielding the desired result.
\end{proof}

Next, we restate Theorem~\ref{THM:Tree} and present its proof. 
\Tree*

\begin{proof}
It suffices to prove that every $\mathbb{Z}_q$-edge-weighted $K_{nq-q+1}$ contains a $q$-divisible subdivision of $T$. We may assume $n\ge 2$, as the case $n = 1$ is trivial. Suppose to the contrary that there exists a $\mathbb{Z}_q$-edge-weighted $K_f$ containing no $q$-divisible subdivisions of $T$ where $f = nq - q + 1$. Let $T$ be a minimal counterexample, and suppose there exists a weight function $\omega: E(K_f) \to \mathbb{Z}_q$ such that $K_f$ contains no $q$-divisible subdivision of $T$.

For a path $P = x_1x_2\dotsm x_{k}$ with $k\ge 2$, define 
\[
R(P) = \{\omega(x_ix_{i + 1}\dotsm x_k)| 1\le i < k\}.
 \]
If $|R(P)| = |P|- 1$, and $0\notin R(P)$, we call $P$ a {\em nice path}. Note that a non-zero-weight edge is a nice path. Take $P = x_1x_2\dotsm x_k\,(k\ge 2)$ to be a longest nice path. Since $R(P) \subseteq \mathbb{Z}_q\setminus\{0\}$, we have $k\le q$. By the minimality of $T$, $K_f -V(P)$ contains a $q$-divisible subdivision $\sub(T-v)$ for a leaf $v$ in $T$. Let $u$ be the neighbor of $v$ in $T$. If $\omega(x_iu) = 0$ for some $x_i\in V(P)$ in $K_f$, then by  adding the zero-weight edge $x_iu$ to $\sub(T - v)$, we obtain a $q$-divisible subdivision of $T$, a contradiction.

Now assume $\omega(x_iu)\not= 0$ for all $x_i\in V(P)$. If $-\omega(x_ku)\in R(P)$, then there must exist some $i$ such that $\omega(x_ix_{i + 1}\dotsm x_k)=-\omega(x_ku)$. Thus by adding the zero-weight  path $x_ix_{i + 1}\dotsm x_ku$ to $\sub(T - v)$, we  obtain a $q$-divisible subdivision of $T$, a contradiction. Therefore, assume $-\omega(x_ku)\notin R(P)$. 
Then we must have $k< q$; otherwise, since $R(P) = k-1\ge q - 1$, it would follow that $R(P) = \mathbb{Z}_q \setminus \{0\}$, implying  that $-\omega(x_ku)\in R(P)$, a contradiction. 
Set path
\[
P' = x_1x_2\dotsm x_{k}u.
\]
Then $R(P') = (R(P) + \omega(x_ku))\cup \{\omega(x_ku)\}$. Thus $|R(P')| = |R(P)| + 1 = |P'| - 1$  and $0\notin R(P')$, resulting in a longer nice path, a contradiction to the maximality of $P$.

\end{proof}


When $p$ is a prime and $H$ is connected, we have an improved upper bound for $s_p(H)$.

\Prime*


\begin{proof}
By induction on $m$. The base case $m = n - 1$ comes from Theorem \ref{THM:Tree}. Assume $m\ge n$ and the result holds for all connected graphs with $m - 1$ edges.

We start with a single vertex to create a $4$-connector   $X$ by adding three vertices sequentially. 
Once two endpoints of $X$ have $p$ paths between them with distinct weights, we terminate this procedure.
Suppose $|X| = 3s + 1$ when the procedure halts. Then by Observation~\ref{Obs: K4-connector}, $2s + 1\le p$. Otherwise, the procedure terminates when $|X| \le 3(s - 1) + 1$. 

    Let us assume we have successfully constructed a 4-connector 
    \[
    X = 
    \{x_1,x_2,\dotsc,x_{t + 1}\}
    \cup 
    \{y_1,y_2,\dotsc,y_t\}\cup \{z_1,z_2,\dotsc,z_t\}
    \]
    in a $\mathbb{Z}_q$-edge-weighted $K_f$, where $x_i, y_i, z_i, x_{i+1}$ form an efficient  4-clique $x_iK_4x_{i+1}$ for $i\in [t]$ and $f = \frac{3p - 1}{2}m - \frac{p - 1}{2}n + \frac{p + 1}{2}$. 
     Let $U=V(K_f)\setminus X$. Then
    \begin{equation}\label{EQ:lowerU}
    |U| = f - 3t - 1 \ge \frac{3p - 1}{2}(m - 1) - \frac{p - 1}{2}n + \frac{p + 1}{2}
    \end{equation}
    since $2t + 1\le p$. By the induction hypothesis, $K_f[U]$ contains a $q$-divisible subdivision $\sub(H - e)$ of $H-e$ for some edge $e\in H$ (Ensure that the graph $H - e$ remains connected). If  $R(X) = \mathbb{Z}_p$, then for edge $e = uv\in E(H)$, there exists a path $P$ between $x_1$ and $x_{t + 1}$ in $K_f[X]$ of  weight $-\omega(x_1u) - \omega(x_{t + 1}v)$. Together with the two edges $x_1u$ and $x_{t + 1}v$, we obtain a zero-weight path $uPv$ connecting $u$ and $v$ through  $X$.   
    By adding the zero-weight path $uPv$ to $\sub(H-e)$, we obtained a $q$-divisible subdivision of $H$.  
    
    Now assume $|R(X)| < p$. Choose arbitrarily a triangle $xyz$ in $K_f[U]$. Let $D = K_f[\{x_{t + 1},x,y,z\}]$. Assume that no larger 4-connector can be obtained.

\begin{cla}\label{zerotwo}
    If among edges $xy,yz,xz$, exactly two edges have weight zero, then $\omega(x_{t + 1}x) = \omega(x_{t + 1}y) = \omega(x_{t + 1}z)$. 
\end{cla}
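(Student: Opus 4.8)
The plan is to translate the non-extendability hypothesis into numerical constraints on the weights of the edges joining $w := x_{t+1}$ to the triangle, and then to finish with a one-line additive argument. Write $\alpha = \omega(wx)$, $\beta = \omega(wy)$, $\gamma = \omega(wz)$ and set $T = \{\alpha, \beta, \gamma\} \subseteq \mathbb{Z}_p$; the claim is exactly that $|T| = 1$. Since both the hypothesis and the desired conclusion are invariant under relabelling $x, y, z$, I would assume without loss of generality that the unique nonzero triangle edge is $xz$, with $\omega(xz) = a \neq 0$ and $\omega(xy) = \omega(yz) = 0$.

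First I would record what non-extendability gives. Because no larger $4$-connector can be appended at $w$, for each possible new endpoint $v \in \{x, y, z\}$ the clique $D = K_f[\{w, x, y, z\}]$ is not an efficient $4$-clique with designated vertices $w$ and $v$; equivalently, the five paths from $w$ to $v$ inside $D$ realize at most two distinct weights. Using $\omega(xy) = \omega(yz) = 0$ and $\omega(xz) = a$, I would compute these three weight-sets explicitly: the paths from $w$ to $x$ realize $\{\alpha, \beta, \gamma, \beta + a, \gamma + a\}$, those from $w$ to $y$ realize $\{\alpha, \beta, \gamma, \alpha + a, \gamma + a\}$, and those from $w$ to $z$ realize $\{\alpha, \beta, \gamma, \alpha + a, \beta + a\}$. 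Each of these sets contains $T$, so the bound ``at most two distinct weights'' forces $|T| \le 2$ at once.

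To finish, suppose for contradiction that $|T| = 2$. Then each weight-set has size at most $2$ and contains the $2$-element set $T$, so each weight-set equals $T$; comparing with the explicit lists forces $\alpha + a, \beta + a, \gamma + a \in T$, i.e. $T + a \subseteq T$. As translation by $a$ is a bijection of the finite set $\mathbb{Z}_p$, this gives $T + a = T$, so $T$ is a union of cosets of $\langle a \rangle$. Since $p$ is prime and $a \neq 0$ we have $\langle a \rangle = \mathbb{Z}_p$, hence $T = \mathbb{Z}_p$, contradicting $|T| = 2 < p$. Therefore $|T| = 1$, which is precisely $\omega(x_{t+1}x) = \omega(x_{t+1}y) = \omega(x_{t+1}z)$.

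I expect the only real obstacle to be the bookkeeping of the first step: one must check that ``no larger $4$-connector can be obtained'' genuinely rules out all three ways of designating the new endpoint (not just one), that the relevant competing paths are exactly the five lying inside the single clique $D$, and that these five path-weights are computed correctly from the given edge weights. Once the three weight-sets are on the table, the coset argument is immediate and, pleasantly, sidesteps any case analysis over which pair among $\alpha, \beta, \gamma$ coincides; it is the primality of $p$ (making $\langle a \rangle$ all of $\mathbb{Z}_p$) that converts $T + a = T$ into the contradiction.
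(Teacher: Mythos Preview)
Your argument is correct and, in fact, cleaner than the paper's. Both proofs rest on the same underlying constraint: since no efficient $4$-clique $x_{t+1}K_4v$ can be appended at $x_{t+1}$ for any choice of new endpoint $v\in\{x,y,z\}$, the $w$--$v$ paths inside $D$ realize at most two distinct weights. The difference lies in how this constraint is exploited. The paper picks out, for each endpoint, a particular triple of paths, derives a disjunction of two equalities, and then rules out the unwanted cases by pairing the disjunctions and using $2\omega(xz)\neq 0$ in $\mathbb{Z}_p$; this is a direct but case-heavy computation. You instead list all five path-weights for each endpoint, observe that every such list contains $T=\{\alpha,\beta,\gamma\}$ (giving $|T|\le 2$ immediately), and then convert the residual possibility $|T|=2$ into the translation-invariance $T+a=T$, which primality kills in one stroke.

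What your approach buys is the elimination of the pairwise case analysis: the coset argument handles all configurations of which two among $\alpha,\beta,\gamma$ coincide at once, and the role of the primality of $p$ is made transparent. The paper's approach, by contrast, never looks at more than three paths simultaneously, which keeps each individual step lighter but forces several rounds of ``if not all distinct, then\ldots''. Both are valid; yours is the more structural route.
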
   
\begin{proof}
 Assume $\omega(xz)\ne 0$.  
 Examine three $x_{t + 1}$-$x$ paths in $D$: $x_{t  + 1}x$, $x_{t + 1}yx$, and $x_{t + 1}yzx$, each possessing weights $\omega(x_{t + 1}x)$, $\omega(x_{t + 1}y)$, and $\omega(x_{t + 1}y) + \omega(xz)$, respectively. If the three values are not mutually distinct, then we get $\omega(x_{t + 1}x) = \omega(x_{t + 1}y)$ or $\omega(x_{t + 1}x) = \omega(x_{t + 1}y) + \omega(xz)$ since $\omega(xz)\ne 0$.  Likewise, analyze three $x_{t + 1}$-$y$ paths in $D$: $x_{t + 1}y$, $x_{t + 1}xy$, and $x_{t + 1}xzy$, each with weights $\omega(x_{t + 1}y)$, $\omega(x_{t + 1}x)$, and $\omega(x_{t + 1}x) + \omega(xz)$, respectively.  If the three values are not mutually distinct, then we get $\omega(x_{t + 1}x) = \omega(x_{t + 1}y)$ or $\omega(x_{t + 1}y) = \omega(x_{t + 1}x) + \omega(xz)$ since $\omega(xz)\ne 0$. $\omega(x_{t + 1}x) = \omega(x_{t + 1}y) + \omega(xz)$ and $\omega(x_{t + 1}y) = \omega(x_{t + 1}x) + \omega(xz)$ cannot simultaneously hold, as $2\omega(xz) \neq 0$ in $\mathbb{Z}_p$. Assume $\omega(x_{t + 1}x) \ne \omega(x_{t + 1}y) + \omega(xz)$. If $\omega(x_{t + 1}x)\ne \omega(x_{t + 1}y)$, then $D$ is an efficient clique $x_{t + 1}K_4x$. Thus, $X\cup \{x_{t + 1},x,y,z\}$ is a larger 4-connector with endpoints $x_1$ and $x$, a contradiction.  Thus $\omega(x_{t + 1}x) = \omega(x_{t + 1}y)$. 

Next, analyze another three $x_{t + 1}$-$x$ paths in $D$: $x_{t + 1}x$, $x_{t + 1}yzx$, and $x_{t + 1}zx$, each with weight $\omega(x_{t + 1}x)$, $\omega(x_{t + 1}x) + \omega(xz)$, and $\omega(x_{t + 1}z) + \omega(xz)$. If the three values are not mutually distinct, then  $\omega(x_{t + 1}x) = \omega(x_{t + 1}z)$ or  $\omega(x_{t + 1}x) = \omega(x_{t + 1}z) + \omega(xz)$. Next, analyze three $x_{t + 1}$-$z$ paths: $x_{t + 1}z$, $x_{t + 1}xz$, and $x_{t + 1}yz$, each with weight $\omega(x_{t + 1}z)$, $\omega(x_{t + 1}x)  + \omega(xz)$, and $\omega(x_{t + 1}x)$. If the three values are not mutually distinct, then  $\omega(x_{t + 1}x) = \omega(x_{t + 1}z)$ or  $\omega(x_{t + 1}z) = \omega(x_{t + 1}x) + \omega(xz)$ since $\omega(xz)\ne 0$. Since $\omega(x_{t + 1}x) = \omega(x_{t + 1}z) + \omega(xz)$ and $\omega(x_{t + 1}z) = \omega(x_{t + 1}x) + \omega(xz)$ cannot simultaneously hold, we can assume $\omega(x_{t + 1}x)\ne \omega(x_{t + 1}z) + \omega(xz)$. If $\omega(x_{t + 1}x)\ne \omega(x_{t + 1}z)$, then $D$ is an efficient clique $x_{t + 1}K_4x$. Thus,  $X\cup \{x_{t + 1},x,y,z\}$ would form a larger 4-connector with endpoints $x_1$ and $x$, a contradiction.  Thus $\omega(x_{t + 1}x) = \omega(x_{t + 1}z)$.

Therefore, $\omega(x_{t + 1}x) = \omega(x_{t + 1}y) = \omega(x_{t + 1}z)$. 

\end{proof}

\begin{cla}\label{zeroone}
   If among edges $xy,yz,xz$, exactly  one   has weight zero, assume $\omega(xy) =  0$, then $\omega(x_{t + 1}x) = \omega(x_{t + 1}y) \ne \omega(x_{t + 1}z)$.
\end{cla}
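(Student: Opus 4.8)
The plan is to exploit the standing assumption that no larger $4$-connector can be formed, exactly as in the proof of Claim~\ref{zerotwo}. Appending the clique $D$ to $X$ at $x_{t+1}$ would produce a larger $4$-connector precisely when $D$ is an efficient clique $x_{t+1}K_4w$ for some $w\in\{x,y,z\}$; since $x,y,z\in U$ are disjoint from $X$, the assumption forces $D$ to be inefficient for each of the three choices of endpoint $w$, i.e.\ the five $x_{t+1}$--$w$ paths in $D$ realize at most two distinct weights. Write $a=\omega(x_{t+1}x)$, $b=\omega(x_{t+1}y)$, $c=\omega(x_{t+1}z)$ and $\alpha=\omega(xz)\neq0$, $\beta=\omega(yz)\neq0$, recalling $\omega(xy)=0$; the target is $a=b\neq c$.

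First I would establish $a=b$ by contradiction, assuming $a\neq b$. The five $x_{t+1}$--$z$ paths carry weights $c,\,a+\alpha,\,b+\beta,\,a+\beta,\,b+\alpha$; the four non-direct values form a grid whose two rows differ by $a-b$ and whose two columns differ by $\alpha-\beta$, and one checks that when $a\neq b$ and $\alpha\neq\beta$ at most one coincidence can occur among them (the two ``diagonal'' equalities cannot hold together, as that would force $2(\alpha-\beta)=0$), leaving at least three distinct weights and contradicting inefficiency. Hence $\alpha=\beta=:\gamma\neq0$. Under this specialization the $x_{t+1}$--$x$ weights reduce to $\{a,b,c+\gamma,b+2\gamma\}$ and the $x_{t+1}$--$y$ weights to $\{a,b,c+\gamma,a+2\gamma\}$; each set must contain at most two distinct values while already containing the distinct pair $a,b$, so $b+2\gamma=a$ and $a+2\gamma=b$ (the alternatives $b+2\gamma=b$ and $a+2\gamma=a$ fail since $\gamma\neq0$). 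Adding the two relations yields $4\gamma=0$, impossible for an odd prime $p$; therefore $a=b$.

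It remains to show $a\neq c$, once more by contradiction: if $a=c$ then $a=b=c$. When $\alpha\neq\beta$, the $x_{t+1}$--$z$ paths have weights $a,\,a+\alpha,\,a+\beta$, three distinct values that make $D$ efficient for endpoints $x_{t+1},z$. When $\alpha=\beta=\gamma$, the $x_{t+1}$--$x$ paths have weights $a,\,a+\gamma,\,a+2\gamma$, again three distinct values (using $2\gamma\neq0$ for $p$ odd), making $D$ efficient for endpoints $x_{t+1},x$. Each case contradicts the standing assumption, so $a\neq c$, which together with $a=b$ gives $\omega(x_{t+1}x)=\omega(x_{t+1}y)\neq\omega(x_{t+1}z)$.

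The difficulty here is bookkeeping rather than conceptual: one must list, for each of the three endpoints, the five $D$-paths and their weights without error, and then pick the specialization that makes the ``at most two distinct weights'' constraint actually bite. The single arithmetic fact powering every contradiction is that $2\gamma\neq0$ for $\gamma\neq0$ in $\mathbb{Z}_p$ with $p$ odd (equivalently $4\gamma=0\Rightarrow\gamma=0$), the same oddness hypothesis underpinning Claim~\ref{zerotwo} and the surrounding arguments.
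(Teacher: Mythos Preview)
Your proof is correct and follows essentially the same approach as the paper: both exploit the fact that $D$ cannot be an efficient $x_{t+1}K_4w$ for any $w\in\{x,y,z\}$, listing the path weights and forcing the desired equalities. The only difference is organizational---the paper splits into the cases $\alpha+\beta=0$ versus $\alpha+\beta\neq 0$ (invoking Cauchy--Davenport in the former), whereas you split into $\alpha\neq\beta$ versus $\alpha=\beta$ and argue the sumset bound by hand; your arrangement is arguably a bit cleaner, but the substance is the same.
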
  
\begin{proof}
 
If $\omega(xz) + \omega(yz) = 0$, there exist five $x_{t + 1}$-$z$ paths: $x_{t + 1}xz$, $x_{t + 1}xyz$, $x_{t + 1}yz$, $x_{t + 1}yxz$, and $x_{t + 1}z$, with corresponding weights $\omega(x_{t + 1}x) + \omega(xz)$, $\omega(x_{t + 1}x) - \omega(xz)$, $\omega(x_{t + 1}y) - \omega(xz)$, $\omega(x_{t + 1}y) + \omega(xz)$, and $\omega(x_{t + 1}z)$, respectively. The set can be expressed as
\[
\left(\{\omega(x_{t + 1}x),\omega(x_{t + 1}y)\} + \{\omega(xz),-\omega(xz)\}\right)\cup \{\omega(x_{t + 1}z)\}. 
\]
By the Cauchy-Davenport Theorem, if $\omega(x_{t + 1}x)\ne \omega(x_{t + 1}y)$, there are three $x_{t + 1}$-$x$ paths with mutually distinct weights in $D$. Hence $X\cup \{x_{t + 1},x,y,z\}$ forms a larger 4-connector, a contradiction. Therefore, $\omega(x_{t + 1}x) = \omega(x_{t + 1}y)$. Further,  to prevent $D$ contains three $x_{t + 1}$-$x$ paths with mutually distinct weights, we must have   $\omega(x_{t + 1}z) = \omega(x_{t + 1}x) + \omega(xz)$ or $\omega(x_{t + 1}z) = \omega(x_{t + 1}x) - \omega(xz)$. Thus, $\omega(x_{t + 1}x) = \omega(x_{t + 1}y) \ne \omega(x_{t + 1}z)$. 

Now, assume $\omega(xz) +\omega(yz)\ne 0$. Consider three $x_{t + 1}$-$x$ paths: $x_{t + 1}x$, $x_{t + 1}yx$, and $x_{t + 1}yzx$ with weights $\omega(x_{t + 1}x)$, $\omega(x_{t + 1}y)$, and $\omega(x_{t + 1}y) + \omega(yz) + \omega(xz)$. If the three values are not mutually distinct, then $\omega(x_{t + 1}x) = \omega(x_{t + 1}y)$ or $\omega(x_{t + 1}x) = \omega(x_{t + 1}y) + \omega(yz) + \omega(xz)$. Similarly, if no three $x_{t+1}$-$y$ paths have pairwise distinct weights, then $\omega(x_{t + 1}x) = \omega(x_{t + 1}y)$ or $\omega(x_{t + 1}y) = \omega(x_{t + 1}x) + \omega(xz) + \omega(yz)$. 
Note that $\omega(x_{t + 1}x) = \omega(x_{t + 1}y) + \omega(yz) + \omega(xz)$ and $\omega(x_{t + 1}y) = \omega(x_{t + 1}x) + \omega(xz) + \omega(yz)$ cannot simultaneously hold. Assume $\omega(x_{t + 1}x) \ne \omega(x_{t + 1}y) + \omega(yz) + \omega(xz)$. Therefore, if $\omega(x_{t + 1}x) \ne \omega(x_{t + 1}y)$, there exist three $x_{t + 1}$-$x$ paths with mutually distinct weights, which enables us to create a larger 4-connector, resulting in a contradiction. Thus $\omega(x_{t + 1}x) = \omega(x_{t + 1}y)$. Next, consider four $x_{t + 1}$-$x$ paths: $x_{t + 1}x$, $x_{t + 1}yzx$, $x_{t + 1}zx$, and $x_{t + 1}zyx$ with weights $\omega(x_{t + 1}x)$, $\omega(x_{t + 1}x) +\omega(yz)+\omega(xz)$, $\omega(x_{t + 1}z) + \omega(xz)$, and $\omega(x_{t + 1}z) + \omega(yz)$. We already have $\omega(x_{t + 1}x) \ne \omega(x_{t + 1}x) + \omega(xz) + \omega(yz)$. To prevent there are three $x_{t + 1}$-$x$ paths with  mutually distinct weights, we have 
$$    \omega(x_{t + 1}x) = \omega(x_{t + 1}z) + \omega(xz) \text{~or~} \omega(x_{t + 1}x) + \omega(xz) + \omega(yz) = \omega(x_{t + 1}z) + \omega(xz),$$
and
$$    \omega(x_{t + 1}x) = \omega(x_{t + 1}z) + \omega(yz) \text{~or~} \omega(x_{t + 1}x) + \omega(xz) + \omega(yz) = \omega(x_{t + 1}z) + \omega(yz).$$
Since $\omega(xz),\omega(yz)\ne 0$, we conclude that $\omega(xz) = \omega(yz)$, and  $\omega(x_{t + 1}z) = \omega(x_{t + 1}x) - \omega(xz)$ or $\omega(x_{t + 1}x) + \omega(xz)$. Therefore, $\omega(x_{t + 1}x) = \omega(x_{t + 1}y) \ne \omega(x_{t + 1}z)$. 
 
\end{proof}

\begin{cla}\label{nonzero}
    $\omega(xy),\omega(yz),\omega(xz)$ can not be all non-zero. 
\end{cla}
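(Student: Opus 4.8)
The plan is to argue by contradiction in the same spirit as Claims~\ref{zerotwo} and~\ref{zeroone}. Assuming all three of $\omega(xy),\omega(yz),\omega(xz)$ are non-zero, I will show that $D=K_f[\{x_{t+1},x,y,z\}]$ already contains, for some endpoint $w\in\{x,y,z\}$, three $x_{t+1}$--$w$ paths of pairwise distinct weights, i.e.\ that $D$ is an efficient clique $x_{t+1}K_4w$. This would make $X\cup\{x,y,z\}$ a strictly larger $4$-connector, contradicting the assumption that the construction has halted. Write $a=\omega(x_{t+1}x)$, $b=\omega(x_{t+1}y)$, $c=\omega(x_{t+1}z)$ and $p_1=\omega(xy)$, $p_2=\omega(yz)$, $p_3=\omega(xz)$, with $p_1,p_2,p_3$ all non-zero. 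For each target $w$ there are exactly five $x_{t+1}$--$w$ paths in $D$; let $W_w\subseteq\mathbb{Z}_p$ be the set of their weights. It then suffices to derive a contradiction from $|W_x|,|W_y|,|W_z|\le 2$.

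The key structural device is to split the four non-trivial paths to a given target into two \emph{families} according to which of the remaining two triangle vertices is visited first, and to record exactly when a family degenerates to a single weight. A short computation shows that the family through $y$ toward $x$ collapses precisely when $p_1=p_2+p_3$, the family through $z$ toward $x$ collapses precisely when $p_3=p_1+p_2$, and so on, so there are only three collapse conditions, $C_1\colon p_1=p_2+p_3$, $C_2\colon p_2=p_1+p_3$, and $C_3\colon p_3=p_1+p_2$. Because $p$ is odd, any two of them force one of $p_1,p_2,p_3$ to vanish (for instance $C_1$ and $C_2$ give $2(p_1-p_2)=0$, hence $p_1=p_2$ and then $p_3=0$); thus \emph{at most one} $C_i$ holds. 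This is the observation that keeps the casework finite.

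In Case~I, no $C_i$ holds, so every family has two distinct weights; then $|W_x|\le 2$ forces the two families at $x$ to coincide as $2$-element sets, and matching them (using $2\lambda\neq0$ for $\lambda\neq0$) yields $p_1=p_3$ and $b=c$. The identical argument at $y$ gives $p_1=p_2$ and $a=c$. Hence $p_1=p_2=p_3=:t$ and $a=b=c$, but then $W_x=\{a,a+t,a+2t\}$ has three elements (since $t,2t\neq0$), contradicting $|W_x|\le2$. In Case~II exactly one $C_i$ holds, and after relabelling the triangle---which permutes $p_1,p_2,p_3$ transitively and leaves both the hypotheses and the efficiency of $D$ invariant---I may assume it is $C_3$. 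The two families at $y$ remain non-degenerate, so $|W_y|\le2$ again forces $p_1=p_2$ and $a=c$, together with the membership constraint $b\in\{a+p_1,a+3p_1\}$; meanwhile the family through $z$ at $x$ has collapsed, and $|W_x|\le2$ forces $a=b+p_1$, i.e.\ $b=a-p_1$. Since $a-p_1$ lies in $\{a+p_1,a+3p_1\}$ only if $2p_1=0$ or $4p_1=0$, both impossible in $\mathbb{Z}_p$ with $p\ge3$ prime, we reach the final contradiction.

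The crux is the second paragraph: isolating $C_1,C_2,C_3$ and proving them pairwise exclusive is what collapses an a priori unwieldy analysis of five paths to each of three targets down to two short cases. After that, every step is a two-way set-matching whose only arithmetic input is that $2\lambda\neq0$ and $4\lambda\neq0$ for $\lambda\neq0$. I expect the main care to be the bookkeeping in Case~II---tracking which family has collapsed and verifying that the membership constraint extracted from the analysis at $y$ is incompatible with the equation forced by the analysis at $x$.
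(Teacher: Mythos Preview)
Your proof is correct and achieves the same contradiction as the paper, but via a genuinely different organisation of the casework.

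The paper's argument proceeds in the \emph{opposite} direction regarding your conditions $C_i$. It first assumes $\omega(xy)\neq\omega(xz)+\omega(yz)$ (your $\neg C_1$), picks three $x_{t+1}$--$x$ paths and three $x_{t+1}$--$y$ paths at a time, and through a longer chain of sub-cases (including an appeal to the Cauchy--Davenport theorem and a further split on whether $\omega(x_{t+1}x)=\omega(x_{t+1}z)\pm\omega(xz)$) eventually forces a contradiction. By the symmetry of $x,y,z$ it then concludes that \emph{all three} relations $p_i=p_j+p_k$ must hold simultaneously, whence $p_1=p_2=p_3=0$.

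Your route is cleaner: by isolating the three collapse conditions $C_1,C_2,C_3$ up front and proving them pairwise exclusive (using only $2\lambda\neq 0$), you immediately know that at most one holds, so there are just two cases. Packaging the four non-trivial paths to each target into two two-element ``families'' then reduces each case to matching a pair of doubletons, with every sub-case killed by $2\lambda\neq0$ or $4\lambda\neq0$. This avoids Cauchy--Davenport entirely and keeps the arithmetic transparent. What the paper's approach buys, by contrast, is that it never needs to analyse a collapsed family; once $\neg C_1$ is assumed, it drives straight to a contradiction and then appeals to symmetry, rather than separately treating the ``exactly one $C_i$'' scenario.

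One small point worth making explicit in your write-up: in Case~I, once the two families at $x$ are forced to coincide as a two-element set, the direct weight $a$ must also lie in that set (since $|W_x|\le2$); you never use this membership in Case~I, which is fine because the contradiction already comes from computing $W_x=\{a,a+t,a+2t\}$ directly, but it is worth noting so the reader sees the step is harmless rather than forgotten.
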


\begin{proof}
Suppose not. 
{First, suppose $\omega(xy)\ne \omega(xz) + \omega(yz)$.} Consider paths between $x_{t + 1}$ and $x$: $x_{t + 1}x$, $x_{t + 1}yx$, and $x_{t + 1}yzx$ with weights $\omega(x_{t + 1}x)$, $\omega(x_{t + 1}y) + \omega(xy)$, and $\omega(x_{t + 1}y) + \omega(xz) + \omega(yz)$; and paths between $x_{t + 1}$ and $y$ in $D$: $x_{t + 1}y$, $x_{t + 1}xy$, and $x_{t + 1}xzy$ with weights $\omega(x_{t + 1}y)$, $\omega(x_{t + 1}x) + \omega(xy)$, and $\omega(x_{t + 1}x) + \omega(xz) + \omega(yz)$. If neither of the two sets of three values consists of all distinct values, we can conclude that  
$$    \omega(x_{t + 1}x)= \omega(x_{t + 1}y) + \omega(xy) \text{~or~} \omega(x_{t + 1}x) = \omega(x_{t + 1}y) + \omega(xz) + \omega(yz),$$
and 
$$\omega(x_{t + 1}y)= \omega(x_{t + 1}x) + \omega(xy) \text{~or~} \omega(x_{t + 1}y) = \omega(x_{t + 1}x) + \omega(xz) + \omega(yz)$$
by $\omega(xy) \ne \omega(xz) + \omega(yz)$. 
Since  $\omega(xy)\ne 0$, we have either
\[\omega(x_{t + 1}x) = \omega(x_{t + 1}y) \text{~and~} \omega(xz) + \omega(yz) = 0,\] 
or
 \[
 \omega(x_{t + 1}x) = \omega(x_{t + 1}y) + \omega(xy) \text{~and~} \omega(xz) + \omega(yz) + \omega(xy) = 0,
 \]
 or 
 \[
  \omega(x_{t + 1}y) = \omega(x_{t + 1}x) + \omega(xy) \text{~and~} \omega(xz) + \omega(yz) + \omega(xy) = 0,
 \]
If it is the first case, consider paths between $x_{t + 1}$ and $z$ in $D$: $x_{t + 1}xz$, $x_{t + 1}yz$, $x_{t + 1}xyz$, and $x_{t + 1}yxz$ with weights  $\{\omega(x_{t + 1}x)$, $\omega(x_{t + 1}x) + \omega(xy)\} + \{\omega(xz),-\omega(xz)\}$. We can conclude that $\omega(xy) = 0$ or $\omega(xz) = 0$  by the Cauchy-Davenport Theorem if there are no three distinct values, a contradiction. Thus, $\omega(xy) + \omega(yz) + \omega(xz) = 0$, and we can assume $\omega(x_{t + 1}y) = \omega(x_{t + 1}x) + \omega(xy)$. By the symmetry of $x, y$ and $z$, we have either $\omega(x_{t + 1}x) = \omega(x_{t + 1}z) + \omega(xz)$ or $\omega(x_{t + 1}x) = \omega(x_{t + 1}z) - \omega(xz)$. 
If $\omega(x_{t + 1}x) = \omega(x_{t + 1}z) - \omega(xz)$,  consider paths between $x_{t + 1}$ and $z$ in $D$: $x_{t + 1}z,x_{t + 1}yz$ and $x_{t + 1}yxz$ with weights $\omega(x_{t+1}z),\omega(x_{t + 1}y) + \omega(yz),\omega(x_{t + 1}y) + \omega(xy) + \omega(xz)$. Whenever two of them are equal, we always obtain one of $\omega(xy),\omega(xz)$ and $\omega(yz)$ is 0, a contradiction.      If $\omega(x_{t + 1}x) = \omega(x_{t + 1}z) + \omega(xz)$,  consider paths between $x_{t + 1}$ and $z$ in $D$: $x_{t + 1}z,x_{t + 1}xz$ and $x_{t + 1}yxz$ with weights $\omega(x_{t+1}z),\omega(x_{t + 1}x) + \omega(xz),\omega(x_{t + 1}y) + \omega(xy) + \omega(xz)$. Whenever two of them are equal, we always obtain one of $\omega(xy),\omega(xz)$ and $\omega(yz)$ is 0, a contradiction.

Therefore, by the symmetry of $x, y$ and $z$, we have
\[  
\left\{\begin{aligned}
    \omega(xy) = \omega(xz) + \omega(yz),\\
    \omega(yz) = \omega(xy) + \omega(xz),\\
    \omega(xz) = \omega(xy) + \omega(yz).
\end{aligned}\right. 
\]
By solving the system of equations, we obtain $\omega(xy) =  \omega(yz) = \omega(xz) = 0$, a contradiction. 
\end{proof}

Now we analyze the structure of $K_f[U]$. Choose a nonzero weight edge $xy$ in $K_f[U]$. If there exists a vertex $z$ with  $\omega(xz)\ne 0$ or $\omega(yz) \ne 0$, by Claim \ref{nonzero},  exactly one of $\omega(xz)$ and $\omega(yz)$ is 0. 
By Claim \ref{zeroone}, we have $\omega(x_{t + 1}x) \ne\omega(x_{t + 1}y)$. Thus,   by Claim \ref{zerotwo}, $\omega(xz')$ and $\omega(yz')$ can not be zero simultaneously for every $z'\in U\setminus\{x,y\}$. Therefore, $U\setminus\{x,y\}$ has a partition $ U_1\cup U_2$ such that each vertex in $U_1$ is incident to $x$ via an edge of nonzero weight, and to $y$ via an edge of weight 0; each vertex in  $U_2$ is incident to $y$ via an edge of nonzero weight, and to $x$ via an edge of weight 0. Moreover, by Claims \ref{nonzero},  all edges inside $U_1$ and $U_2$ have weight 0.
Consequently, the set $U$ has a partition $(U_1\cup \{y\})\cup (U_2\cup \{x\})$ such that all edges inside $U_1\cup \{y\}$ and $U_2\cup \{x\}$ have weight zero. 
By (\ref{EQ:lowerU}) and $m\ge n$, $|U| \ge np - p + 1\ge 2n - 1$. It follows that one of the sets $(U_1\cup \{y\})$ and $(U_2\cup \{x\})$ must have at least $n$ vertices, and hence we find a copy of $H$ contained in $K_f[U]$ with all edge weights equal to 0.  Now assume for every vertex $z\in U\setminus\{x,y\}$, $\omega(xz) = \omega(yz) = 0$. Then there is no nonzero edges between $\{x,y\}$ and $U\setminus \{x,y\}$. It follows that all nonzero weight edges form a matching in $K_f[U]$. Consequently, there exists a copy of $H$ in it with all edges of weight 0 as well. 
\end{proof}

\section{Proof of Theorem \ref{qis2}}
Given a $\mathbb{Z}_2$-edge-weighted $K_f$, for a vertex $v$, any neighbor incident to $v$  by an edge of weight $i\in\mathbb{Z}_2$ is referred to as an {\em $i$-neighbor} of $v$. 
Given a subset of vertices $U\subseteq V(K_f)$, we use $N_U^i(v)$ to denote the set of the $i$-neighbors of $v$ in $U$.
Given a vertex set $S$, let $N_U^i(S) = \bigcup_{v\in S} N_U^i(v)\setminus S$.     
Let $E^i(X,Y)$ be the set of  the edges between $X$ and $Y$ with weight $i\in\mathbb{Z}_2$. 
Now we present the proof of Theorem~\ref{qis2}

\Degenerate*
\begin{proof}
It suffices to prove that every $Z_2$-edge-weighted $K_{n+m}$ contains a 2-divisible subdivision of $H$.
We prove the theorem by induction on $n=|V(H)|$. The base case $n=1$ or 2 holds trivially. Now assume $n\ge 3$ and the statement is true for all $n'<n$. Let $H$ be a 5-degenerate graph with $n$ vertices and $m$ edges.
   We may assume  $H$ is connected.  Otherwise, by the induction hypothesis, we can disjointly embed a 2-divisible subdivision of each component of $H$ into $K_{n + m}$, yielding a  2-divisible subdivision of $H$.  Let $\omega: E(K_{n + m})\to\mathbb{Z}_2$ be an edge weight function on $K_{n + m}$. We denote the edge-weighted complete graph by $G$.

    If $G$ contains neither $T_1$ nor $T_3$, then by Lemma \ref{bipar},  there exists a partition $\{V_1,V_2\}$ of $V(G)$ with  
 all edges between $V_1$ and $V_2$ of weight 1 and all edges inside $V_1$ or $V_2$ of weight 0. Without loss of generality, assume $|V_1| \ge |V_2|$. Then $|V_1| \ge \lceil\frac{n + m}{2}\rceil \ge n=|V(H)|$ since $H$ is connected. Since every edge in $V_1$ has weight 0, $H$ can be embedded into $V_1$, which provides us with a 2-divisible subdivision of $H$. Therefore, we may always assume that $G$ contains at least one triangle isomorphic to either $T_1$ or $T_3$. 
Since $H$ is 5-degenerate, $\delta(H)\le 5$. We distinguish the following proof into five cases according to the minimum degree of $H$. 
    
\vspace{10pt}
   \noindent {\bf (1)} $\delta(H)=1$. 
   
   Let $u$ be a vertex of degree one in $H$ and $N_H(u)=\{v\}$. Let $H' = H - u$. Then $|V(H')|+|E(H')|= n + m - 2$. 
   If all edge weights of $G$ are 0, then the conclusion holds trivially. Otherwise, there exists an edge $u_1u_2$ of weight 1 in $G$.  By induction, $G - \{u_1,u_2\}$ contains  a 2-divisible subdivision $\sub(H')$ of $H'$. 
    For convenience, we will use the same notation for the vertices of the original graph and their corresponding branch vertices in the subdivision that follows. For the branch vertex $v$, if either $\omega(vu_1) = 0$ or $\omega(vu_2) = 0$, then by adding an edge of weight 0 to $\sub(H')$, we have a 2-divisible subdivision of $H$. 
   Now assume $\omega(vu_1) = \omega (vu_2) = 1$. Then by adding the zero-weight path $vu_1u_2$ or $vu_2u_1$ to $\sub(H')$, we obtain a 2-divisible subdivision of $H$. 

    \vspace{10pt}
    \noindent {\bf (2)} $\delta(H)=2$.
    
    Let $u$ be a vertex $u$ of degree 2 in $H$ and $N_H(u)=\{v_1,v_2\}$.  Let $H'=H - u$. Then $|V(H')|+|E(H')| = n + m  - 3$. 
   
   If $G$ contains a $T_1$-type triangle $T_1(u_1,u_2, u_3)$.  
        By the induction hypothesis, $G - \{u_1,u_2,u_3\}$ contains a 2-divisible subdivision $\sub(H')$ of $H'$. 
        If $\omega(v_iu_j)= 1$ for all $1\le i\le 2$ and $1\le j\le 3$,  by adding the zero-weight paths $v_1u_1u_3$ and $v_2u_2u_3$ to $\sub(H')$, we  obtain a 2-divisible subdivision of $H$. Otherwise, without loss of generality, assume $\omega(v_1u_1) = 0$. Then  by adding the edge $v_1u_1$ and a zero-weight path $v_2T_1(u_1,u_2,u_3)u_1$ guaranteed by Observation~\ref{spetri}, we obtain a 2-divisible subdivision of $H$.  
  
       Now assume $G$ contains no $T_1$-type triangles. Then $G$ must contain a $T_3$-type triangle. Let $T_3(u_1,u_2,u_3)$ be a $T_3$-type triangle with $\omega(u_1u_2) = 1$. 
        By the induction hypothesis, $G - \{u_1,u_2,u_3\}$ contains a 2-divisible subdivision $\sub(H')$ of $H-u$.  If $\omega(v_1u_1) = \omega(v_1u_2) = 1$, this results in a $T_1$-type triangle, a contradiction.  Thus, we may assume $\omega(v_1u_1) = 0$.
        By adding the edge $v_1u_1$ and a zero-weight path $v_2T_3(u_1,u_2,u_3)u_1$ guaranteed by Observation~\ref{spetri} to $\sub(H')$, we obtain a 2-divisible subdivision of $H$.

\vspace{10pt}

    \noindent {\bf (3)} $\delta(H)=3$.
    
 Let $u$ be a vertex of degree three in $H$ and $N_H(u)=\{v_1,v_2,v_3\}$. Let $H'=H - u$. Then $|V(H')|+ |E(H')| = n + m - 4$.

 Suppose $G$ contains a $T_1$-type triangle $T_1(u_1,u_2,w)$. If $G$ further contains an edge $wu_3$ with $\omega(wu_3) = 1$ for some $u_3\in V(G)\setminus \{u_1,u_2,w\}$, by the induction hypothesis, $G - \{u_1,u_2,u_3,w\}$ contains a 2-divisible subdivision $\sub(H')$ of $H'$.  
        If $|N^0_{\{v_1,v_2,v_3\}}(x)| = 3$, for some vertex $x\in \{u_1,u_2,u_3\}$, then  the addition of edges $\{xv_1,xv_2,xv_3\}$ to $\sub(H')$ results in a 2-divisible subdivision of $H$. 
       Thus $|N^0_{\{v_1,v_2,v_3\}}(x)|\le 2$ for any vertex $x\in \{u_1,u_2,u_3\}$.
        If there exists some vertex $x\in \{u_1,u_2,u_3\}$ with $|N^0_{\{v_1,v_2,v_3\}}(x)| = 2$, suppose $N^0_{\{v_1,v_2,v_3\}}(x)=\{v_1,v_2\}$, then by adding the edges $v_1x,v_2x $ and a zero-weight path $v_3T_1(u_1,u_2,w)x$ guaranteed by Observation~\ref{spetri} to $\sub(H')$, we thereby obtain a 2-divisible subdivision of $H$.  
        Therefore, we may assume  $|N^0_{\{v_1,v_2,v_3\}}(x)|\le 1$ (or equivalently, $|N^1_{\{v_1,v_2,v_3\}}(x)|\ge 2$) for any vertex $x\in \{u_1,u_2,u_3\}$. Let $B$ be the bipartite subgraph induced by all edges of weights 1 in the complete bipartite graph $G[\{v_1,v_2,v_3\},\{u_1,u_2,u_3\}]$. Then $d_B(x)\ge 2$ for any $x\in \{u_1,u_2,u_3\}$. If there exists a perfect matching in $B$,  then by adding this perfect matching and the edges $u_1w$, $u_2w$,  and $u_3w$ to $\sub(H')$, we obtain a 2-divisible subdivision of $H$.  By Hall's Theorem,  $B$ fails to have a perfect matching only if for each vertex $x\in \{u_1,u_2,u_3\}$, the sets $N^1_{\{v_1,v_2,v_3\}}(x)$ are identical and $|N^1_{\{v_1,v_2,v_3\}}(x)|=2$.
        Without loss of generality, assume $N^1_{\{v_1,v_2,v_3\}}(x)=\{v_2, v_3\}$ for  any vertex $x\in \{u_1,u_2,u_3\}$.
        Then $\omega(v_1u_1) = \omega(v_1u_2) = \omega(v_1u_3) = 0$.  If either $\omega(v_2w)$ or $\omega(v_3w)$ equals 1, say $\omega(v_3w)=1$, then by adding the edge $u_1v_1$, and the zero-weight paths $u_1u_2v_2$, $u_1wv_3$ to $\sub(H')$, we obtain a 2-divisible subdivision of $H$. Otherwise, $\omega(v_2w) = \omega(v_3w) = 0$. Consequently, by adding the edges $wv_2$, $wv_3$ and a zero-weight path $wT_1(u_1,u_2,w)v_1$ (guaranteed by Observation~\ref{spetri}) to $\sub(H')$, we  obtain a 2-divisible subdivision of $H$.
      Now assume $\omega(wv)=0$   for all $v\in V(K_f)\setminus\{u_1,u_2,w\}$. By the induction hypothesis, $K_f - \{u_1,u_2,w\}$ contains a 2-divisible subdivision $\sub(H')$ of $H'$. Note that $\omega(wv_1) = \omega(wv_2) = \omega(wv_3) = 0$. The addition of the edges $wv_1$, $wv_2$, and $wv_3$ to $\sub(H')$ yields a 2-divisible subdivision of $H$.

     Now, assume $G$ contains no $T_1$-type triangles and thus it must contain a $T_3$-type triangle. Suppose $T_3(u_1,u_2,u_3)$ is a $T_3$-type triangle with $\omega(u_1u_2) = 1$ in $G$.  Let $H'=H-u$. Then $|V(H')|+|E(H')|= n + m -4$. By the induction hypothesis, $G - \{u_1,u_2,u_3\}$ contains a 2-divisible subdivision $\sub(H')$  of $H'$. 
 If there exists a vertex $x\in \{u_1,u_2,u_3\}$ satisfying $|N^0_{\{v_1,v_2,v_3\}}(x)| = 3$, then  the addition of edges $\{xv_1,xv_2,xv_3\}$ to $\sub(H')$ results in a 2-divisible subdivision of $H$.
     Now assume $|N^0_{\{v_1,v_2,v_3\}}(x)|\le 2$ for any $x\in\{u_1,u_2,u_3\}$.
  If there exists a vertex, say $u_1$, with  $|N^0_{\{v_1,v_2,v_3\}}(u_1)|=2$, assume $N^0_{\{v_1,v_2,v_3\}}(u_1)=\{v_1, v_2\}$, then the addition of the edges $u_1v_1$, $u_1v_2$, and a zero-weight path $u_1T_3(u_1, u_2, u_3)v_3$ to $\sub(H')$ yields a 2-divisible subdivision of $H$. Therefore, we may assume $|N^0_{\{v_1,v_2,v_3\}}(x)|\le 1$   for any $x\in\{u_1,u_2,u_3\}$. Thus, there are at least four edges of weight 1 between $\{u_1, u_2\}$ and $\{v_1, v_2, v_3\}$. By the pigeonhole principle, there must be a vertex in $\{v_1,v_2,v_3\}$ incident to both $u_1$ and $u_2$ via edges of weight 1. This results in a $T_1$-type triangle, a contradiction.

    \begin{figure}[ht]
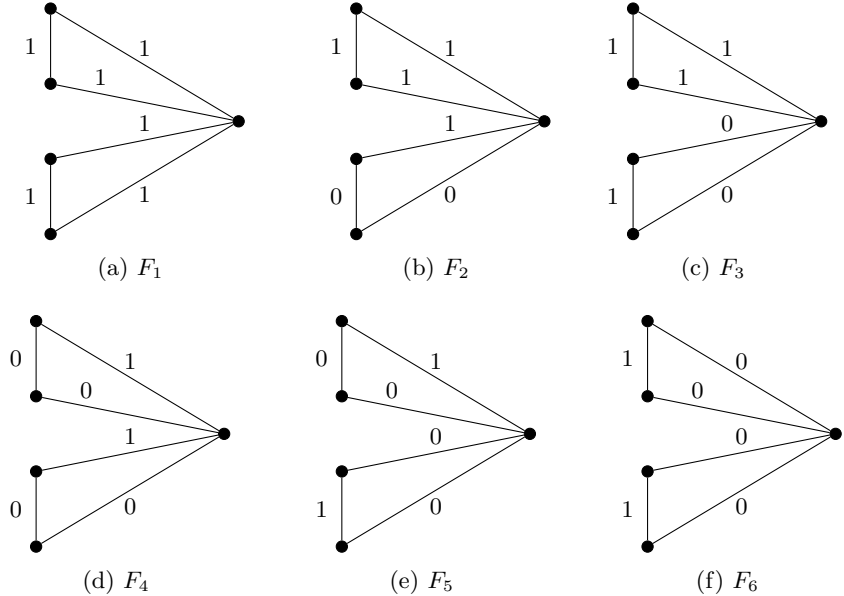

        \centering
        \subfloat[$F_1$]{\intertri{1}{1}{1}{1}{1}{1}} \qquad \subfloat[$F_2$]{\intertri{0}{1}{0}{1}{1}{1}}  \quad \subfloat[$F_3$]{\intertri{1}{1}{0}{0}{1}{1}} \\\subfloat[$F_4$]{\intertri{0}{0}{0}{1}{0}{1}} \qquad \subfloat[$F_5$]{\intertri{1}{0}{0}{0}{0}{1}} \qquad \subfloat[$F_6$]{\intertri{1}{1}{0}{0}{0}{0}} 
        \caption{Vertex-sharing triangles.}\label{Fig:TWOTRI}
    \end{figure}

\vspace{10pt}
    \noindent{\bf (4)} $\delta(H)=4$.
    
    Let $u$ be a vertex of degree four in $H$ and $N_H(u)=\{v_1,v_2,v_3,v_4\}$.  Let $H'=H - u$. Then $|V(H')|+|E(H')| = n + m - 5$.
        In this case, we consider structures composed of two triangles sharing a common vertex of either $T_1$-type or $T_3$-type, that yield six distinct types, denoted by $F_1, F_2, \dots, F_6$  (see Figure~\ref{Fig:TWOTRI}). Before this, we need to prove the existence of such a structure. 

\begin{cla}\label{CL:F}
     $G$ contains at least one of structures $F\in\{F_1, F_2, \dots, F_6\}$. 
\end{cla}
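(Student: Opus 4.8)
My plan is to recast the claim as a statement about \emph{odd} triangles and then build the configuration greedily from a single one. Call a triangle \emph{odd} if it is of type $T_1$ or $T_3$; equivalently its three edge-weights sum to $1$ in $\mathbb{Z}_2$, equivalently its vertices form a $1$-efficient triple. Inspecting Figure~\ref{Fig:TWOTRI}, each $F_i$ is a pair of triangles $eab$ and $ecd$ that are both odd and meet only in the apex $e$; sorting by the types of the two triangles gives one $(T_1,T_1)$ picture ($F_1$), two $(T_1,T_3)$ pictures ($F_2,F_3$) and three $(T_3,T_3)$ pictures ($F_4,F_5,F_6$), so the six pictures exhaust the weighted-isomorphism types of ``two odd triangles sharing a single vertex''. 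Thus it suffices to produce two odd triangles of $G$ meeting in exactly one vertex.

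The engine is the parity identity that in any $K_4$ each edge lies in two of the four triangles, so a $K_4$ contains an even number of odd triangles. Fix an odd triangle $T=xyz$, which exists by assumption. For each vertex $w\notin\{x,y,z\}$, applying the identity to $\{x,y,z,w\}$ shows that an odd number of $xyw,xzw,yzw$ are odd; let $O(w)\subseteq\{xy,xz,yz\}$ collect the edges $e$ of $T$ for which $e$ and $w$ span an odd triangle, so $|O(w)|\in\{1,3\}$. If there are external vertices $w\neq w'$ and edges $e\in O(w)$, $e'\in O(w')$ with $e\neq e'$, then the odd triangles spanned by $(e,w)$ and $(e',w')$ meet in exactly the common endpoint of $e$ and $e'$, giving some $F_i$; this succeeds whenever some $w$ has $|O(w)|=3$ (pair it with any other external vertex) or when two external vertices carry distinct singletons. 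I would next dispose of odd triangles using at most one vertex of $T$: an odd $xww'$ meets $T$ only in $x$, and an odd $ww'w''$ meets the odd triangle $xyw$ (present once $O(w)=\{xy\}$) only in $w$, each yielding an $F_i$. After these reductions the sole surviving configuration is that \emph{every} odd triangle of $G$ contains one fixed edge, say $xy$.

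This common-edge configuration is exactly where I expect the real difficulty to lie, and it is the heart of the matter. Since every odd triangle then contains both $x$ and $y$, deleting either endpoint destroys all of them, so by Lemma~\ref{bipar} both $G-x$ and $G-y$ are cut graphs, i.e.\ in each the weight-$1$ edges form a complete bipartite graph. The main obstacle is to handle precisely this case: one must exploit the interaction of these two overlapping bipartitions together with the data $\omega(xy)=1$ and the large order $|V(G)|=n+m$. This is the step where the parity-extension heuristic gives out — two odd triangles sharing only $x$ or $y$ are exactly what one is fighting to create — and where the bulk of the work and the decisive use of the size hypothesis are concentrated; reconciling the two cut decompositions across the edge $xy$ is the part of the argument I would expect to be hardest to make rigorous.
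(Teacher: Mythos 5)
Your parity reduction is correct as far as it goes, and it is genuinely different from the paper's route: the paper first manufactures two \emph{disjoint} odd triangles (removing one odd triangle and, if none remains, finishing via Lemma~\ref{bipar} and the bound $n+m\ge 3n$), and then merges them into a vertex-sharing pair by pigeonhole on the connecting edges; you instead grow the pair out of a single odd triangle $T=xyz$ using the fact that every $K_4$ contains an even number of odd triangles. Your classification of the six pictures in Figure~\ref{Fig:TWOTRI} as the weighted-isomorphism types of two odd triangles meeting in exactly one vertex is right, and each of your reduction steps (some $|O(w)|=3$, two distinct singletons, an odd triangle meeting $T$ in at most one vertex) does yield some $F_i$. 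The problem is that you stop exactly at the terminal case, and that case is a genuine gap, not merely a technical one.

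Concretely, if every odd triangle of $G$ contains the fixed edge $xy$, then any two odd triangles share \emph{both} endpoints of $xy$, so $G$ contains no $F_i$ whatsoever; and this case is realizable (give $xy$ weight $1$ and every other edge weight $0$). So Claim~\ref{CL:F}, read literally, is false in your terminal configuration, and no amount of ``reconciling the two cut decompositions'' of $G-x$ and $G-y$ can produce a structure that does not exist. What saves the argument is an escape hatch \emph{outside} the claim, which is exactly what the paper's own proof does in its degenerate branch (``if no further $T_1$ or $T_3$ exists \dots we can embed $H$ \dots using only edges of weight 0''): in your terminal case $G-x$ contains no odd triangle, so Lemma~\ref{bipar} partitions $V(G)\setminus\{x\}$ into two classes whose internal edges all have weight $0$; since $\delta(H)=4$ forces $m\ge 2n$ and hence $|V(G)|-1=n+m-1\ge 3n-1$, one class has at least $n$ vertices, and $H$ embeds into it with all edges of weight $0$, which completes Theorem~\ref{qis2} directly. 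In other words, the missing idea is not a cleverer merge of odd triangles but the recognition that the claim must be proved, and used, in the dichotomy form ``either some $F_i$ exists or a $2$-divisible subdivision of $H$ can be found outright''; with that one extra sentence your parity argument becomes a complete (and arguably cleaner) substitute for the paper's proof.
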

\begin{proof}
       Since $\delta(H) = 4$, we have $n \ge 5$, $m \ge 2n$, and $n + m \ge 2n + n = 3n$.  
Then, there must exist a triangle of type $T_1$ or $T_3$ in $K_f$. Remove one such triangle. This leaves at least $3n - 3$ vertices. If no further $T_1$ or $T_3$ exists in the remaining graph, then by Lemma \ref{bipar}, we can embed $H$ into $K_f$ using only edges of weight 0.
Therefore, we may assume there exist two disjoint triangles, each isomorphic to either $T_1$ or $T_3$. 
   
     If $G$ contains two disjoint $T_1$-type triangles, choose one vertex $u$ from one of these triangles, then among the three edges connecting $u$ to the vertices of the other triangle, by the pigeonhole principle, at least two of them have the same weight.  Selecting two of such edges,  we can then construct either an $F_1$ or an $F_3$.
     
   If $G$ contains two disjoint  triangles,  one being a $T_1$-type triangle $T_1(x_1,x_2,x_3)$ and the other  a $T_3$-type triangle $T_3(y_1,y_2,y_3)$ with $\omega(y_1y_2) = 1$.  If there exists a vertex  $x\in\{x_1,x_2,x_3\}$ such that the edges connecting $x$ to $\{y_1, y_2\}$ have the same weight, then we have either an $F_1$ or an $F_3$, we are done.
   Hence we may assume that for every vertex in $\{x_1,x_2,x_3\}$, the edges connecting it to $\{y_1, y_2\}$ have different weights. Without loss of generality, assume $\omega(x_1y_1) = 1$ and $\omega(x_1y_2) = 0$. Then whenever $\omega(x_1y_3) = 1$ or 0, we can construct an $F_2$ (for example, if $\omega(x_1y_3)=1$, then $T_1(x_1,x_2,x_3)$ and the triangle $T_3(x_1, y_2, y_3)$ construct an $F_5$).
  
   Finally, suppose that $G$ contains two disjoint $T_3$-type triangles,  one being $T_3(x_1,x_2,x_3)$ with $\omega(x_1x_2)=1$ and the other  $T_3(y_1,y_2,y_3)$ with $\omega(y_1y_2) = 1$. If the edges connecting $x_3$ to $\{y_1, y_2\}$ have the same weight, then we have either an $F_3$ or an $F_6$.
   Hence, we may assume that, without loss of generality, $\omega(x_3y_1) = 1$ and $\omega(x_3y_2) = 0$.  Then whenever $\omega(x_3y_3) = 1$ or 0, we can construct an $F_5$.
   
    \end{proof}

 According to Claim~\ref{CL:F}, $G$ contains an $F\in \{F_1, F_2, \dots, F_6\}$. Choose such an $F$, let $w$ be the shared vertex in $F$, and  $u_1u_2$ and $u_3u_4$ be the two edges not incident to $w$.  By the induction hypothesis, $G - F$ contains a 2-divisible subdivision  $\sub(H')$ of $H'$.
 Note that $F$ consists of two triangles belonging to  $\{T_1,T_3\}$. If       $|N_{\{v_1,v_2,v_3,v_4\}}^0(x)|\ge 3$ for some vertex $x\in\{u_1,u_2,u_3,u_4\}$, then, combining with Observation~\ref{spetri}, we always can construct four disjoint zero-weight paths connecting $x$ to $\{v_1,v_2,v_3,v_4\}$, which yields a 2-divisible subdivision of $H$.
Therefore, 
\begin{equation}\label{EQ:cl2}
  |N_{\{v_1,v_2,v_3,v_4\}}^0(x)|\le 2 \text{ (or equivalently  $|N_{\{v_1,v_2,v_3,v_4\}}^1(x)|\ge 2$) for any $x\in\{u_1,u_2,u_3,u_4\}$.}  
\end{equation}

If $\omega(u_1u_2) = 1$, we begin by considering the following \textbf{special case}.\\
\textbf{Special case}: If $|N_{\{v_1,v_2,v_3,v_4\}}^0(u_1)| =  2$ with $\omega(v_1u_1) = \omega(v_2u_1) = 0$,  then $\omega(u_2v_3) = \omega(u_2v_4) = 0$; otherwise, assume $\omega(u_2v_3)=1$, then by adding the edges $u_1v_1$, $u_1v_2$, the zero-weight path $u_1u_2v_3$, and the zero-weight path $u_1T(u_3,u_4,w)v_4$ to $\sub(H')$,  we obtain a 2-divisible subdivision of $H$. By (\ref{EQ:cl2}), we have $\omega(v_3u_1) = \omega(v_4u_1) = \omega(v_1u_2) = \omega(v_2u_2) = 1$. 

According to Claim~\ref{CL:F}, we distinguish this case into four subcases.

\vspace{5pt}
        \noindent\textbf{(4.1)} There exists an $F_1$ in $K_f$.

        If there exists a perfect matching in the bipartite graph $G[\{u_1,u_2,u_3,u_4\},\{v_1,v_2,v_3,v_4\}]$  with all edges of weight 1, denoted by $u_1v_{i_1}, u_2v_{i_2}, u_3v_{i_3}, u_4v_{i_4}$,  then by adding the zero-weight paths $wu_1v_{i_1}, wu_2v_{i_2}, wu_3v_{i_3}, wu_4v_{i_4}$ to $\sub(H')$, we  construct a 2-divisible subdivision of $H$.  
        Now, suppose that the bipartite graph $G[\{u_1,u_2,u_3,u_4\},\{v_1,v_2,v_3,v_4\}]$ contains no perfect matching with all edges of weight 1. By Hall's Theorem, there exists a subset  $S\subseteq \{v_1,v_2,v_3,v_4\}$ such that \[|N^1_{\{u_1,u_2,u_3,u_4\}}(S)| < |S|.\] 
     If $|S| = 4$, then there exists a vertex $x\in\{u_1,u_2,u_3,u_4\}$ satisfying $\omega(xv_i) = 0$ for $1\le i\le 4$, a  contradiction to (\ref{EQ:cl2}).
     If $|S| = 3$, then there are   two vertices in $\{u_1,u_2,u_3,u_4\}$ with the $1$-neighbors  $\{v_1,v_2,v_3,v_4\}$ at least $ 3$,  contradicting to (\ref{EQ:cl2}).
           If $|S| = 2$, then $|N^1_{\{u_1,u_2,u_3,u_4\}}(S)|\le 1$. Without loss of generality, assume $S = \{v_1,v_2\}$. Then $|N^0_{\{u_1,u_2,u_3,u_4\}}(v_1)\cap N^0_{\{u_1,u_2,u_3,u_4\}}(v_2)|\ge 3$. Without loss of generality,  assume 
            \[N^0_{\{u_1,u_2,u_3,u_4\}}(v_i) \supseteq \{u_1,u_2,u_3\}\]
            for $i = 1,2$. This implies that $|N^0_{\{v_1,v_2,v_3,v_4\}}(u_j)|\ge 2$ for $j = 1,2,3$.
            By (\ref{EQ:cl2}), we have $\omega(v_iu_j) = 1$ for all $3\le i \le 4$ and $1\le j\le 3$. This contradicts the structure of the \textbf{Special Case}.
If $|S| = 1$, then $|N^1_{\{u_1,u_2,u_3,u_4\}}(S)| = 0$. Without loss of generality, assume $S = \{v_1\}$, i.e., \[N^0_{\{u_1,u_2,u_3,u_4\}}(v_1) = \{u_1,u_2,u_3,u_4\}.\] 
If $\omega(v_2u_1) = 0$, then  $\omega(u_2v_3)=\omega(u_2v_4)=0$ according to the structure of the \textbf{Special Case}. This yields $\{v_1, v_3, v_4\}\subseteq N^0_{\{v_1,v_2,v_3,v_4\}}(u_2)$, a contradiction to (\ref{EQ:cl2}).  Therefore, $\omega(v_2u_1)=1$. By symmetry, we have  $\omega(v_iu_j) = 1$ for all $2\le i \le 4$ and $1\le j\le 4$. If one of $\{v_2w,v_3w,v_4w\}$ has weight zero, say $v_2w$, by adding \[v_1T_1(u_1,u_2,w)w,v_2w,v_3u_3w,v_4u_4w\]to $\sub(H')$, we obtain a 2-divisible subdivision of $H$.  Thus $\omega(v_2w)= \omega(v_3w) = \omega(v_4w) = 1$. Then by adding 
\[v_1u_1,v_2u_2u_1,v_3wu_1,v_4T_1(v_4,u_3,u_4)u_1\]
to $\sub(H')$, we again obtain a 2-divisible subdivision of $H$.

\vspace{5pt}    
 \noindent\textbf{(4.2)} $G$ contains no $F_1$ but contains an $F_2$ or an $F_3$. Let $F$ be an $F_2$ or an $F_3$ with $\omega(u_1u_2) = 1$. 
     
      If  the \textbf{Special Case} does not hold, then by symmetry, both $u_1$ and $u_2$ have more than two 1-neighbors in $\{v_1,v_2,v_3,v_4\}$. Consequently, they share at least 2 common 1-neighbors, say $v_1,v_2$. If either $\omega(v_3w)=0$ or $\omega(v_4w)=0$, say $\omega(v_3w)=0$,  then by adding the zero-weight  paths $wv_3, wu_1v_1, wu_2v_2$,  and   $wT(u_3,u_4,w)v_4$ to $\sub(H')$,  we obtain a 2-divisible subdivision of $H$. Now assume $\omega(v_3w) = \omega(v_4w) = 1$. Since $u_1$ must have another 1-neighbor in $\{v_3,v_4\}$, say $v_3$, we have an $F_1$ formed by two triangles sharing the vertex $u_1$:  $T_1(v_2, u_1, u_2)$ and $T(v_3, u_1, w)$, a contradiction. 
     
    Now assume the \textbf{Special Case} holds. Then $\omega(u_1v_3)=\omega(u_1v_4)=1$ and $\omega(u_2v_1)=\omega(u_2v_2)=1$. Since the edges $v_i u_2$ and $v_j u_1$ for $1 \le i \le 2$ and $3 \le j \le 4$ form a matching in which both edges have weight 1, we must assign $\omega(v_{i'} w) = 1$ for all $1 \le i' \le 4$ to avoid creating a 2-divisible subdivision of $H$ with $w$ as the branch vertex corresponding to $u$ (otherwise, a matching $\{v_i u_2, v_j u_1\}(i,j\ne i')$ and the zero-weight edge $wv_{i'}$, together with the edges $w u_1$, $w u_2$, and the triangle $T(u_3, u_4, w)$ where $T \in \{T_1, T_3\}$,  would result in a 2-divisible subdivision of $H$).
       If $F=F_2$  with $\omega(wu_3) = 1$, then we must have $\omega(u_3v_i) = 0$ for all $1 \le i \le 4$ to avoid an $F_1$ (otherwise, $T_1(u_1, u_2, w)$ and $T_1(w, u_3, v_i)$ form an $F_1$), which contradicts \eqref{EQ:cl2}. If $F=F_3$, we must have $\omega(v_1u_3) = \omega(v_1u_4) = 1$ to avoid  an $F_2$ (otherwise, $T_1(u_1,u_2,w)$ and $T_3(w,v_1,u_3(u_4))$ would form an $F_2$).    Consequently, $T_1(v_1,u_2,w)$ and $T_1(v_1,u_3,u_4)$  form an $F_1$ with shared vertex $v_1$, a contradiction.

    \vspace{5pt}
    \noindent\textbf{(4.3)} $G$ contains none of  $\{F_1, F_2, F_3\}$ but contains an $F_4$ or an $F_5$ with $\omega(u_1w) = 1$. 

Let $F$ be an $F_4$ or an $F_5$. Since $|N^1_{\{v_1,v_2, v_3,v_4\}}(u_1)|\ge 2$,  without loss of generality,   
assume $\omega(v_1u_1) = \omega(v_2u_1) = 1$.  To prevent the emergence of $T_1$-type triangles $T_1(w,u_1,v_1)$ and $T_1(w,u_1,v_2)$ that would imply an $F'\in \{F_1,F_2,F_3\}$ (a contradiction), we have $\omega(v_1w) = \omega(v_2w) = 0$. Therefore, by adding the edges $wv_1$, $wv_2$, and the two zero-weight paths $wT_3(u_1,u_2,w)v_3$ and $wT_3(u_3,u_4,w)v_4$, we obtain a 2-divisible subdivision of $H$. 

\vspace{5pt}    
\noindent\textbf{(4.4)} $G$ contains none of  $\{F_1, F_2, \dots, F_5\}$ but contains an $F_6$. 
     
Let $F$ be an $F_6$. 
 If $u_1$ and $u_2$ share a common 1-neighbor in $\{v_1,v_2,v_3,v_4\}$, say $v_1$, then we obtain a $T_1$-type triangle $T_1(u_1, u_2, v_1)$. Together with the $T_3$-type triangle $T_3(w,u_3,u_4)$, this gives two disjoint triangles--one being $T_1$-type and the other $T_3$-type. From the proof of Claim~\ref{CL:F}, it follows that there exists an $F'\in\{F_1, F_2, F_3\}$, a contradiction. Therefore, $u_1$ and $u_2$ have no common 1-neighbors in $\{v_1,v_2,$ $v_3,v_4\}$.  Since $|N^1_{\{v_1,v_2, v_3,v_4\}}(u_i)|\ge 2$ for $i=1,2$, we have $|N^1_{\{v_1,v_2, v_3,v_4\}}(u_1)|=|N^1_{\{v_1,v_2, v_3,v_4\}}(u_2)|=2$. Without loss of generality,   
  assume $\omega(v_1u_1) = \omega(v_2u_1) = \omega(v_3u_2) = \omega(v_4u_2) = 1$. This implies that $\omega(v_1u_2) = \omega(v_2u_2) = \omega(v_3u_1) = \omega(v_4u_1) = 0$. 
   If there exists a vertex $v_i$ satisfying $\omega(wv_i)=0$, say $\omega(wv_1)=0$, then by adding the zero-weight paths $wv_1$, $wu_2v_2$, $wu_1v_3$, and $wT_3(w, u_3, u_4)v_4$, we obtain a 2-divisible subdivision of $H$. Now assume  $\omega(v_iw) = 1$ for $1\le i\le 4$. Then $T_3(v_1,u_2,w)$ and $T_3(u_3,u_4,w)$ forms an $F_5$, a contradiction.

    \begin{figure}[ht]
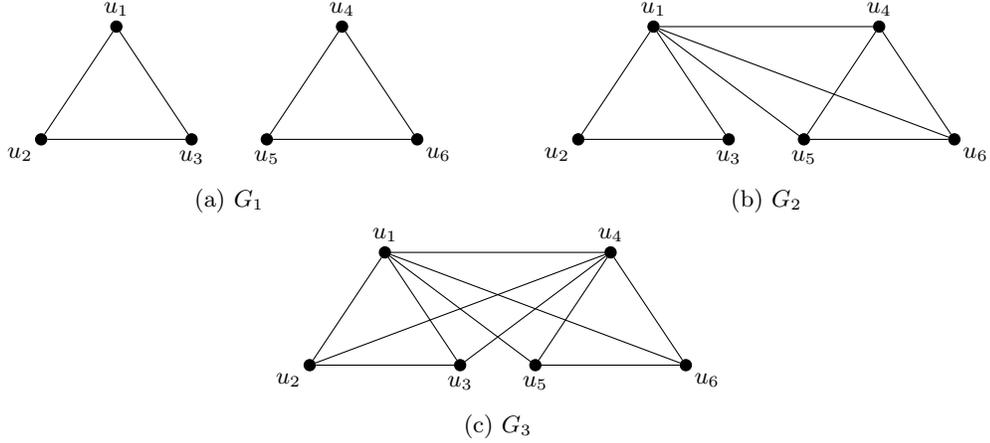

        \centering
        \subfloat[$G_1$]{\twotrif} \qquad \subfloat[$G_2$]{\twotris}  \quad \subfloat[$G_3$]{\twotrit} 
        \caption{The structure used in Case $\delta(H) = 5$.}\label{Fig:TwoT1}
    \end{figure}
\vspace{10pt}
    \noindent{\bf (5)} $\delta(H)=5$.
    
Let $u$ be  a vertex  of degree 5 in $H$ and denote $N_H(u)=\{v_1,v_2,v_3, v_4,v_5\}$. 
Since $\delta(H) =  5$, we have $n\ge 6$ and $n + m\ge n + 5n/2 = 7n/2$. If $G$ contains exactly one triangle of $T_1$-type or $T_3$-type, then deleting it results in a complete graph $K_{f'}$  with $f' = n + m -3\ge 7n/2 - 3> 2n - 1$ that contains no triangles of types $T_1$ and $T_3$. Therefore, according to Lemma~\ref{bipar}, it must contain a copy of $H$ with all edges of weight zero. Thereby, we can assume that $G$ contains  triangles $T(u_1,u_2,u_3)$ and $T(u_4,u_5,u_6)$ with $T\in \{T_1,T_3\}$. We may further assume they are disjoint; otherwise,  deleting one would destroy the other.
Let $U = \{u_i|1\le i\le 6\}$ and $V = \{v_i|1\le i\le 5\}$. By the induction hypothesis, $G - U$ contains a 2-divisible subdivision  $\sub(H')$ of $H'$.

   \begin{cla}\label{N2}
      $|N_V^0(u_i)|\le 2$, or equivalently, $|N_V^1(u_i)|\ge 3$ for $1\le i\le 6$. 
   \end{cla}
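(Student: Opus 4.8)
The plan is to argue by contradiction with the standing goal of the case: I will show that if some $u_i$ had $|N_V^0(u_i)|\ge 3$, then $G$ would already contain a $2$-divisible subdivision of $H$, which is exactly what the whole case is trying to produce. The mechanism is to promote $u_i$ to be the branch vertex playing the role of $u$, and then to exhibit five internally disjoint zero-weight paths joining $u_i$ to $v_1,\dots,v_5$, all of whose internal vertices lie in $U\setminus\{u_i\}$. Since $\sub(H')$ lives entirely in $G-U$ and uses $v_1,\dots,v_5$ as its branch vertices, these five paths meet $\sub(H')$ only at the $v_j$, so adjoining them to $\sub(H')$ produces a $2$-divisible subdivision of $H$. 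Hence we may assume $|N_V^0(u_i)|\le 2$ for every $i$, which since $|V|=5$ is equivalent to $|N_V^1(u_i)|\ge 3$.

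To carry this out, by the symmetry of the two triangles I may assume $u_i\in\{u_1,u_2,u_3\}$, say $u_i=u_1$, and by relabelling the $v_j$ I may assume $\omega(u_1v_1)=\omega(u_1v_2)=\omega(u_1v_3)=0$. Then $u_1v_1$, $u_1v_2$, $u_1v_3$ are three zero-weight paths with no internal vertices. For $v_4$ I would spend the other triangle $T(u_4,u_5,u_6)\in\{T_1,T_3\}$: by Observation~\ref{spetri} there is a zero-weight $u_1$-$v_4$ path $u_1T(u_4,u_5,u_6)v_4$ whose internal vertices lie in $\{u_4,u_5,u_6\}$. For $v_5$ I would use the triangle $T(u_1,u_2,u_3)\in\{T_1,T_3\}$ containing $u_1$ itself, obtaining by Observation~\ref{spetri} a zero-weight $u_1$-$v_5$ path with internal vertices in $\{u_2,u_3\}$. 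The five internal-vertex sets $\emptyset,\emptyset,\emptyset,\{u_4,u_5,u_6\},\{u_2,u_3\}$ are pairwise disjoint and their union lies in $U$, so the five paths are internally disjoint and avoid $\sub(H')$, as required.

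The one point needing care, and the step I regard as the main obstacle, is the path to $v_5$: after committing all of $T(u_4,u_5,u_6)$ to $v_4$, the only spare vertices are the two remaining vertices $u_2,u_3$ of $u_1$'s own triangle, and $u_1$ is simultaneously an endpoint of the path and a vertex of that triangle. The resolution is that $T(u_1,u_2,u_3)$ is $1$-efficient in the $\mathbb{Z}_2$ setting, so there are two $u_1$-$u_2$ paths inside $\{u_1,u_2,u_3\}$ whose weights differ by $1$; appending the edge $u_2v_5$ yields two $u_1$-$v_5$ paths of weights $w$ and $w+1$ for some $w\in\mathbb{Z}_2$, one of which is $0$. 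This is precisely the content of Observation~\ref{spetri} in the situation where one endpoint of the path already lies on the triangle, used in exactly the same way as the path $v_2T_1(u_1,u_2,u_3)u_1$ in the Case $\delta(H)=2$, so I can simply invoke it rather than redo the parity bookkeeping. This completes the construction and hence establishes the bound $|N_V^0(u_i)|\le 2$ for all $1\le i\le 6$.
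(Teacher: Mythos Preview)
Your proposal is correct and takes essentially the same approach as the paper: assume some $u_i$ (say $u_1$) has three $0$-neighbours $v_1,v_2,v_3$ in $V$, use the three zero-weight edges directly, and route zero-weight paths to the remaining two $v_j$'s through the two triangles via Observation~\ref{spetri}, thereby extending $\sub(H')$ to a $2$-divisible subdivision of $H$. The only difference is cosmetic (you send $v_4$ through $T(u_4,u_5,u_6)$ and $v_5$ through $T(u_1,u_2,u_3)$, while the paper swaps these), and your extra paragraph justifying the use of Observation~\ref{spetri} when $u_1$ already lies on its own triangle is a welcome clarification of a point the paper leaves implicit.
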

\begin{proof}
   If $|N_V^0(x)| \ge 3$ for some  $x\in U$, say $u_1$, let $v_1,v_2,v_3\in N^0_V(u_1)$, then by  adding the zero-weight  edges $u_1v_1, u_1v_2, u_1v_3$ and the zero-weight  paths $u_1T(u_1,u_2,u_3)v_4$ and $u_1T(u_4,u_5,u_6)v_5$ to $\sub(H')$, we  obtain a 2-divisible  subdivision of $H$. 
 
\end{proof}
     
    
 Let $G_1$, $G_2$, and $G_3$ be the structures, each consisting of two $T_1$-type triangles connected by edges of weight 1 (as shown in Figure~\ref{Fig:TwoT1}).
\begin{cla}\label{Fig:G1}
   There exists a $G_1$ in $G$. 
\end{cla}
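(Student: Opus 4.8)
The goal is to upgrade the two disjoint triangles $T^{(1)}=T(u_1,u_2,u_3)$ and $T^{(2)}=T(u_4,u_5,u_6)$, each of type $T_1$ or $T_3$, into two \emph{vertex-disjoint} $T_1$-type triangles. The key leverage is Claim~\ref{N2}: every $u_i$ satisfies $|N^1_V(u_i)|\ge 3$, and since $|V|=5$, any two of the $u_i$ share a common $1$-neighbour in $V$ (indeed $|N^1_V(u_i)\cap N^1_V(u_j)|\ge 3+3-5=1$). In particular, whenever $u_iu_j$ is a weight-$1$ edge inside $U$, choosing a common $1$-neighbour $v\in N^1_V(u_i)\cap N^1_V(u_j)$ makes $\{u_i,u_j,v\}$ a $T_1$-type triangle. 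Since each of $T^{(1)},T^{(2)}$ carries at least one weight-$1$ edge ($T_1$ has three, $T_3$ has one), each can be realised as a $T_1$-type triangle, using one extra vertex of $V$ precisely when it is of type $T_3$.

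I would first clear the easy cases. If both $T^{(1)}$ and $T^{(2)}$ are of type $T_1$, they already constitute a $G_1$. If exactly one, say $T^{(1)}$, is of type $T_3$, I replace it by the $T_1$-type triangle $\{u_1,u_2,v\}$ from its unique weight-$1$ edge $u_1u_2$; as $T^{(2)}$ is of type $T_1$ and needs no vertex of $V$, the two triangles are disjoint and form a $G_1$. The only substantial case is when both are of type $T_3$, with weight-$1$ edges $u_1u_2$ and $u_4u_5$, say. Writing $S_1=N^1_V(u_1)\cap N^1_V(u_2)$ and $S_2=N^1_V(u_4)\cap N^1_V(u_5)$, both are nonempty, and as long as I can pick $v\in S_1$, $v'\in S_2$ with $v\ne v'$, the triangles $\{u_1,u_2,v\}$ and $\{u_4,u_5,v'\}$ are disjoint and give a $G_1$.

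The main obstacle is therefore the degenerate configuration $S_1=S_2=\{v^*\}$ for a single vertex $v^*$. Here $|N^1_V(u_1)\cap N^1_V(u_2)|=1$ together with $|N^1_V(u_1)|,|N^1_V(u_2)|\ge 3$ inside a $5$-set forces both neighbourhoods to have size exactly $3$, so that $N^1_V(u_1)$ and $N^1_V(u_2)$ split $V\setminus\{v^*\}$ into two $2$-sets $A_1,A_2$; likewise $u_4,u_5$ split $V\setminus\{v^*\}$ into $B_1,B_2$. The plan is to produce a $T_1$-type triangle avoiding $v^*$ and pair it with a $v^*$-triangle from the opposite side. If the edge inside $A_1$ (resp. $A_2$, $B_1$, $B_2$) has weight $1$, then $\{u_1\}\cup A_1$ (resp. $\{u_2\}\cup A_2$, $\{u_4\}\cup B_1$, $\{u_5\}\cup B_2$) is a $T_1$-type triangle disjoint from $v^*$; pairing it with $\{u_4,u_5,v^*\}$ or $\{u_1,u_2,v^*\}$ (each valid since $v^*\in S_2$, resp. $v^*\in S_1$) yields two disjoint $T_1$-type triangles.

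This leaves the sub-case in which all four induced pairs $A_1,A_2,B_1,B_2$ carry weight $0$, which is weight-$0$-heavy. Here I would exploit the weight-$1$ cross edges between $T^{(1)}$ and $T^{(2)}$ together with their common $V$-neighbours, and the auxiliary vertices $u_3,u_6$ (each with $|N^1_V|\ge 3$), to extract the two triangles; and if every such attempt fails, the accumulated weight-$0$ edges force a weight-$0$ clique large enough to embed $H$ directly via Lemma~\ref{bipar}. I expect this final weight-$0$-heavy sub-case, where the simple common-neighbour trick collapses, to be the most delicate and case-laden part of the argument.
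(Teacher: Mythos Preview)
Your approach correctly disposes of the cases with at most one $T_3$, but when both triangles are $T_3$ you hit the obstruction $S_1=S_2=\{v^*\}$ and leave it unresolved. The paper's proof is much shorter because it sidesteps this entirely: it replaces one $T_3$ at a time and, crucially, \emph{re-runs the whole setup} after each replacement. Having replaced $T^{(1)}$ by the $T_1$-triangle on $\{u_2,u_3,v\}$ (where $v\in V$ is a common $1$-neighbour of the endpoints of the weight-$1$ edge), one redefines $U$ to be the six vertices of the new pair $\{u_2,u_3,v\}\cup\{u_4,u_5,u_6\}$, re-applies the induction hypothesis to $G-U$ to obtain a fresh $\sub(H')$ with a fresh $V$, and re-establishes Claim~\ref{N2} (whose proof only needs each of the two disjoint triangles to be of type $T_1$ or $T_3$, so it goes through verbatim). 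Now $v$ lies in the new $U$ and hence outside the new $V$, so when the second $T_3$ is replaced via a common $1$-neighbour $v'$ taken from the new $V$, the disjointness $v'\ne v$ comes for free. Two iterations suffice, and the degenerate $S_1=S_2=\{v^*\}$ scenario---along with the entire ``weight-$0$-heavy'' case analysis you anticipate---never arises.
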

    
\begin{proof}
        Assume, to the contrary, that one of the triangles—say, $T(u_1,u_2,u_3)$—is of type $T_3$. Suppose further that $\omega(u_2u_3) = 1$. By Claim \ref{N2}, we have $|N_V^1(u_i)| \ge 3$ for $i = 2, 3$. Since $|V| = 5$, the pigeonhole principle implies that $u_2$ and $u_3$ share a common 1-neighbor in $V$. This results in a new triangle of type $T_1$ in $G$, thereby allowing the replacement of $T_3(u_1,u_2,u_3)$ with it, which leads to a contradiction.
\end{proof}

Thus, we assume that $G[U]$ contains a $G_1$.

    \begin{cla}\label{CL:G2}
       There exists a $G_2$ in $G$. 
    \end{cla}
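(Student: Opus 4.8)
The plan is to upgrade the $G_1$ guaranteed by Claim~\ref{Fig:G1} into a $G_2$, i.e.\ to produce, after possibly re-selecting one of the two triangles, a vertex that is joined by weight-$1$ edges to all three vertices of a disjoint $T_1$-triangle. Write $A=\{u_1,u_2,u_3\}$ and $B=\{u_4,u_5,u_6\}$ for the two disjoint $T_1$-triangles of the $G_1$. The trivial case is when some vertex of $A$ already has all three cross-edges to $B$ of weight $1$ (or symmetrically a vertex of $B$ to $A$): then that vertex, together with its own triangle and the opposite triangle, is exactly a $G_2$, and we are done. So I would assume from now on that every vertex of $A$ has a weight-$0$ edge into $B$ and vice versa, and aim for a contradiction with the hypothesis that no $G_2$ exists.

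The engine of the argument is Claim~\ref{N2}: each $u_i$ has at least three weight-$1$ neighbours inside the five-element set $V$. Two consequences drive the proof. First, any two vertices $x,y$ with $\omega(xy)=1$ that each have at least three weight-$1$ neighbours in $V$ must share one, since $3+3>5$; as $\omega(xy)=1$, the common neighbour $v$ yields a fresh $T_1$-triangle $\{x,y,v\}$. In particular every edge of $A$ and of $B$ can be completed to a $T_1$-triangle through a vertex of $V$. Second, double counting the weight-$1$ edges between $U=\{u_1,\dots,u_6\}$ and $V$ gives at least $6\cdot 3=18$ such edges, so some $v^{\ast}\in V$ is weight-$1$-adjacent to at least $\lceil 18/5\rceil=4$ vertices of $U$; by pigeonhole at least two of these lie in a common triangle, producing a $T_1$-triangle that uses $v^{\ast}$.

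With these $T_1$-triangles in hand, I would search for the required weight-$1$ \emph{fan}: a vertex joined by weight-$1$ edges to all three vertices of some $T_1$-triangle that is disjoint from a $T_1$-triangle containing it. The idea is that if no such fan can be formed, then for every $T_1$-triangle $Q$ among those just constructed, no vertex lying in a disjoint $T_1$-triangle is weight-$1$-adjacent to all of $Q$; translating this back through Claim~\ref{N2} forces each relevant weight-$1$ neighbourhood in $V$ to be $T_1$-triangle-free while simultaneously constraining the weight-$1$ cross-edges between $A$ and $B$, and these two families of constraints should be mutually incompatible under $|U|=6$ and $|V|=5$. Carrying this out is a finite case analysis organised by which triangle $v^{\ast}$ completes and by the pattern of the weight-$1$ cross-edges between $A$ and $B$.

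I expect the main obstacle to be precisely that a pairwise pigeonhole does not yield a \emph{triple} common weight-$1$ neighbour: establishing a full weight-$1$ fan from a single vertex onto all three vertices of a $T_1$-triangle cannot be read off from common-neighbour counts alone, so one must feed in the cross-edge information between $A$ and $B$ (together with the exclusion of $G_2$) to eliminate the remaining configurations. Keeping the bookkeeping of these cases tight, rather than any single inequality, is where the real work will lie.
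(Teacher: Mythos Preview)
Your plan diverges from the paper at the very first substantive step, and that divergence is the source of the ``main obstacle'' you yourself flag. You work only with Claim~\ref{N2} ($|N_V^1(u_i)|\ge 3$), and then try to manufacture a vertex with three weight-$1$ edges onto a $T_1$-triangle via pigeonhole plus an unspecified case analysis on the cross-edges between $A$ and $B$. The paper never looks at cross-edges between $A$ and $B$ here at all. Instead, its first move is to \emph{strengthen} Claim~\ref{N2} to $|N_V^1(u_i)|\ge 4$ for every $i$: assuming some $u_i$ has two weight-$0$ neighbours in $V$, the paper explicitly assembles a $2$-divisible subdivision of $H$ (with $u_i$ as the branch vertex for $u$, using the two weight-$0$ edges, two two-step zero-weight paths through the other vertices of its own triangle, and one zero-weight path through the opposite triangle via Observation~\ref{spetri}), contradicting the standing hypothesis of the whole proof of Theorem~\ref{qis2}. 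You never invoke that standing hypothesis, only the local assumption ``no $G_2$'', and that is why your counting stalls.

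Once $|N_V^1(u_i)|\ge 4$ is in hand, the paper's finish is a two-line double count with no case analysis: between $\{u_1,u_2,u_3\}$ and $V$ there are at least $12$ weight-$1$ edges, so at least two vertices of $V$ are weight-$1$-adjacent to all of $u_1,u_2,u_3$; between $\{u_4,u_5,u_6\}$ and $V$ there are also at least $12$ weight-$1$ edges, so at most one vertex of $V$ has fewer than two weight-$1$ neighbours in $\{u_4,u_5,u_6\}$. Hence some $v\in V$ is weight-$1$-adjacent to all of $u_1,u_2,u_3$ and to two of $u_4,u_5,u_6$, say $u_4,u_5$; then $T_1(u_1,u_2,u_3)$ and $T_1(v,u_4,u_5)$ form $G_2$. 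Your weaker count ($18$ edges over all of $U$, yielding a $v^\ast$ with four weight-$1$ neighbours in $U$) can land in a $2+2$ split across $A$ and $B$, which is exactly the configuration you cannot resolve without further input. The missing idea is not bookkeeping; it is using the ambient contradiction hypothesis to push the degree bound up by one before you count.
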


      \begin{proof}
     First, we assert that 
\begin{equation}\label{Eq:ui1}
    |N_V^0(u_i)|\le 1 \,\text{( or equivalently, $|N_V^1(u_i)|\ge 4$) for $1\le i\le 6$}.
\end{equation}
Suppose not. Without loss of generality, assume $|N_V^0(u_1)| =  2$, and denote $N^0_V(u_1)=\{v_1,v_2\}$. Since $|N_V^1(u_2)| \ge  3$,  $N_V^1(u_2)\cap\{v_3,v_4,v_5\}\not=\emptyset$. Without loss of generality, assume $\omega(u_2v_3) = 1$. 
If at least one of the edges $u_3v_4$, $u_3v_5$ has weight 1, say $\omega(u_3v_4)=1$, then by adding the zero-weight edges $u_1v_1$, $u_1v_2$, along with the 
 zero-weight paths $u_1u_2v_3$, $u_1u_3v_4$ and a zero-weight path $u_1T(u_4,u_5,u_6)v_5$ (guaranteed by Observation~\ref{spetri}) to $\sub(H')$, we obtain a 2-divisible subdivision of $H$. Hence, it must be that $\omega(u_3v_4) = \omega(u_3v_5) = 0$, which implies that $\omega(u_3v_i)=1$ for $1\le i\le 3$. 
 If one of $v_1u_2$, $v_2u_2$ has weight 1, say $\omega(v_1u_2)=1$, then by adding the zero-weight edges $u_3v_4$, $u_3v_5$, along with the zero-weight paths $u_3u_2v_1$, $u_3u_1v_3$, and a zero-weight path $u_3T_1(u_4,u_5,u_6)v_2$ (guaranteed by Observation~\ref{spetri})  to $\sub(H')$, we again obtain a 2-divisible subdivision of $H$. Therefore, $\omega(v_1u_2) = \omega(v_2u_2) = 0$. This yields that $\omega(u_2v_i)=1$ for $i=3,4,5$ by Claim~\ref{N2}. Then by adding the zero-weight edges $u_1v_1$, $u_1v_2$, together with the zero-weight paths $u_1u_3v_3$, $u_1u_2v_4$, and a zero-weight path $ u_1T_1(u_4,u_5,u_6)v_5$ (guaranteed by Observation~\ref{spetri}) to $\sub(H')$, we once more obtain a 2-divisible subdivision of $H$. 
 
Now consider the number of edges of weight 1 connecting a subset $\{u_i,u_j,u_k\}\subseteq U$  to $V$.  By (\ref{Eq:ui1}), we have 
\begin{equation}\label{EQ:vi1}
12\le |E^1(\{u_i,u_j,u_k\}, V)|=\sum_{\ell=1}^5|N_{\{u_i,u_j,u_k\}}^1(v_\ell)|.
\end{equation}
It follows from (\ref{EQ:vi1}),  there are at least two vertices, say $v_1,v_2$, in $V$ satisfying $|N_{\{u_1,u_2,u_3\}}^1(v_i)|=3$, i.e., $N_{\{u_1,u_2,u_3\}}^1(v_i)=\{u_1,u_2,u_3\}$ for $i=1,2$. 
Furthermore, by (\ref{EQ:vi1}), there is at most one vertex $v$ in $V$ satisfying $|N_{\{u_i,u_j,u_k\}}^1(v)|\le 1$. Hence  at least one of $v_1,v_2$ satisfies $|N_{\{u_4,u_5,u_6\}}^1(v_i)|\ge 2$. Without loss of generality, assume $\omega(v_1u_4) = \omega(v_1u_5) = 1$. Then the triangles $T_1(u_1,u_2,u_3)$ and $T_1(v_1,u_4,u_5)$ form the desired structure $G_2$.

\end{proof}
    \begin{cla}\label{G3}
        There exists a $G_3$ in $G$.
    \end{cla}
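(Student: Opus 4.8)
The plan is to follow the template of Claim~\ref{CL:G2}, but now to extract the extra ``reverse'' bridge that separates $G_3$ from $G_2$. I keep the two disjoint $T_1$-triangles $T_1(u_1,u_2,u_3)$ and $T_1(u_4,u_5,u_6)$ supplied by Claim~\ref{Fig:G1}, the refined bound $|N_V^1(u_i)|\ge 4$ of \eqref{Eq:ui1}, and the conclusion of the counting \eqref{EQ:vi1}: at least two vertices of $V$ are $1$-joined to all of $\{u_1,u_2,u_3\}$, and (by the symmetric count) at least two are $1$-joined to all of $\{u_4,u_5,u_6\}$. Recall that the $G_2$ produced in Claim~\ref{CL:G2} consists of $T_1(u_1,u_2,u_3)$ together with a triangle $T_1(p,u_4,u_5)$, where $p\in V$ is $1$-joined to all of $\{u_1,u_2,u_3\}$; its single bridge is the vertex $p$, which is $1$-joined to the whole triangle $\{u_1,u_2,u_3\}$.

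To upgrade this $G_2$ to a $G_3$ I must supply the opposite bridge: a vertex of $\{u_1,u_2,u_3\}$ that is $1$-joined to the whole triangle $\{p,u_4,u_5\}$. Since $p$ is already $1$-joined to each of $u_1,u_2,u_3$, this reduces to finding some $u_i\in\{u_1,u_2,u_3\}$ with $\omega(u_iu_4)=\omega(u_iu_5)=1$, i.e.\ to producing a weight-$1$ cross-edge pattern between the two original triangles. I would therefore argue by contradiction on the existence of this cross vertex: if no admissible choice of $u_i$ works (ranging over the available $V$-bridges and the pairs of $\{u_4,u_5,u_6\}$-vertices they reach), then every vertex of one triangle misses, via a weight-$0$ cross edge, too much of the other, and I convert this deficiency into an explicit embedding.

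The embedding step is the engine of the contradiction. Using the abundant weight-$1$ edges to $V$ from \eqref{Eq:ui1}, I would choose a branch vertex $\hat u\in\{u_1,\dots,u_6\}$ for $u$ and build five vertex-disjoint zero-weight paths to $v_1,\dots,v_5$: each path is a single weight-$0$ edge, a length-two path $\hat u\,u_j\,v_{\sigma(j)}$ with $\omega(\hat u u_j)=\omega(u_j v_{\sigma(j)})$, or a zero-weight detour through one of the two $T_1$-triangles furnished by Observation~\ref{spetri}. The index map $\sigma$ is obtained from Hall's theorem applied to the bipartite graph of weight-$1$ edges between $\{u_2,\dots,u_6\}$ and $V$, whose degrees are at least $4$ by \eqref{Eq:ui1}. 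Splicing these five paths onto the subdivision $\sub(H')$ embedded in $G-U$, with $\hat u$ as the image of $u$, yields a $2$-divisible subdivision of $H$, contradicting the standing hypothesis.

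The main obstacle, and the point that is genuinely new relative to Claim~\ref{CL:G2}, is that the sought second bridge lives on the cross edges inside $U$, which the $U$--$V$ counting cannot detect; the whole difficulty is to show that any shortfall in these weight-$1$ cross edges can be traded for a valid five-path embedding. Coordinating this trade is delicate because only the six vertices $u_1,\dots,u_6$ are available as internal vertices (the $v_j$ are branch vertices and may not be reused), so the Hall matching on $\{u_2,\dots,u_6\}\times V$ and the triangle detours of Observation~\ref{spetri} must be chosen simultaneously to keep the five paths vertex-disjoint while each stays of zero weight. I expect the bulk of the proof to be the resulting case analysis over the $0/1$ pattern of the cross edges and of the edges to $V$, mirroring one level up the analysis carried out for $G_2$.
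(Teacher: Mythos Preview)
Your plan contains the right ingredients (the degree bound \eqref{Eq:ui1}, a Hall matching from $\{u_2,\dots,u_6\}$ into $V$, and detours via Observation~\ref{spetri}), but it misses the organizing idea that makes the paper's proof short, and as written your embedding step has a gap.

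The paper does \emph{not} look for the second bridge among the cross edges of the original $U$. Instead it first \emph{relabels} $U$ so that $G[U]$ already contains the $G_2$; in particular, the bridge vertex $u_1$ is then $1$-joined to \emph{every} vertex of $U\setminus\{u_1\}=\{u_2,\dots,u_6\}$. With this in hand, the Hall step you propose becomes decisive: if a weight-$1$ perfect matching $\sigma$ between $\{u_2,\dots,u_6\}$ and $V$ exists, then the five paths $u_1u_jv_{\sigma(j)}$ are automatically zero-weight (both edges have weight $1$), and splicing them onto $\sub(H')$ finishes. Your version of this step does not work, because without relabeling you only know $\omega(u_1u_2)=\omega(u_1u_3)=1$; the edges $u_1u_4,u_1u_5,u_1u_6$ may have weight $0$, so the paths $u_1u_jv_{\sigma(j)}$ for $j\ge 4$ need not be zero-weight, and your matching only certifies $\omega(u_jv_{\sigma(j)})=1$, not $0$.

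When the Hall matching fails, the paper's analysis is equally direct: since every $u_j$ with $j\ge 2$ has $|N_V^1(u_j)|\ge 4$, the only obstruction is a single vertex $v_1$ with $\omega(v_1u_j)=0$ for all $j\ge 2$, which forces $\omega(v_iu_j)=1$ for all $i\ge 2$, $j\ge 2$. One then checks $\omega(u_1v_2)$: if it is $0$ there is an explicit embedding, and if it is $1$ then $T_1(u_1,u_2,u_3)$ together with $T_1(v_2,u_4,u_5)$ is a $G_3$, since $u_1$ is $1$-joined to $\{v_2,u_4,u_5\}$ and $v_2$ is $1$-joined to $\{u_1,u_2,u_3\}$. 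Note that the second bridge is the $V$-vertex $v_2$, not a cross edge inside $U$; your search for a $u_i\in\{u_1,u_2,u_3\}$ with $\omega(u_iu_4)=\omega(u_iu_5)=1$ is an unnecessary detour, and the ``bulk of the proof'' you anticipate as a case analysis on the $0/1$ cross pattern simply does not arise.
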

    
\begin{proof}
 It follows from Claim~\ref{CL:G2} that we may assume that $G[U]$ contains a $G_2$. We assume that there is no perfect matching between $U\setminus\{u_1\}$ and $V$ using only edges of weight 1; otherwise, we could add this matching and the edges $\{u_1u_i | 2\le i\le 6\}$ to $\sub(H')$, thereby obtaining a 2-divisible subdivision of $H$.  Since $|N_V^1(u)| \ge  4$ for any $u\in U$, by Hall's Theorem, the only obstruction to obtain a perfect matching between $U\setminus\{u_1\}$ and $V$ using only edges of weight 1 occurs when $|N_V^1(u)|=4$ and the sets $N_V^1(u)$ are identical for all $u\in U\setminus\{u_1\}$, that is, there exists a vertex, say $v_1$, in $V$ with $N^0_{U\setminus\{u_1\}}(v_1)=U\setminus\{u_1\}$. 
 Hence $\omega(v_iu_j) = 1$ for all $2\le i\le 5$ and $2\le j\le 6$. If $\omega(u_1v_2) = 0$, then by adding the edge $u_1v_2$ and the zero-weight paths $u_1T_1(u_1,u_2,u_3)v_1, u_1u_4v_3, u_1u_5v_4, u_1u_6v_5$ to $\sub(H')$, we obtain a 2-divisible subdivision of $H$. Now assume $\omega(u_1v_2)=1$. Then the triangles $T_1(u_1,u_2,u_3)$ and $T_1(v_2,u_4,u_5)$, together with the edges of weight 1 between them,  form a $G_3$ on $G$.
\end{proof}
  According to Claim~\ref{G3},  we may assume that $G[U]$ contains a $G_3$.  By symmetry  of $u_1$ and $u_4$ in $G_3$, and the proof of Claim~\ref{G3}, we must have $N^0_{U\setminus\{u_1\}}(v_1)=U\setminus\{u_1\}$ and $N^0_{U\setminus\{u_4\}}(v_1)=U\setminus\{u_4\}$, i.e., $v_1$ is the only 0-neighbor of all $u_i$, $1\le i\le 6$. Then by adding the edge $u_1v_1$ and the zero-weight paths $u_1u_2v_2, u_1u_3v_3, u_1u_4v_4, u_1u_5v_5$ to $\sub(H')$, we obtain a 2-divisible subdivision of $H$. 
\end{proof}

\section{Proof of Theorem~\ref{ts}}
We restate Theorem~\ref{ts} here.
\onesubTree*

\subsection{Proof of Theorem~\ref{ts} (1)}

It suffices to prove that every $\mathbb{Z}_2$-edge-weighted $K_{2n - 1}$ contains a  2-divisible 1-subdivision of $T$. 

The proof is by induction on $|V(T)|$. The result holds trivially for the base cases $n = 1$ and $n = 2$. Assume $n\ge 3$ and the result holds for all trees on fewer than $n$ vertices. 
Let $T$ be a tree on $n$ vertices. 
By contradiction, assume there is a $\mathbb{Z}_2$-edge-weighted $K_{2n - 1}$ that does not contain a 2-divisible 1-subdivision of $T$. This yields that this edge weight function $\omega: E(K_{2n-1})\to \mathbb{Z}_2$ could not be monochromatic. 
\begin{cla}\label{twocolor}
    For every edge $x_1x_2$, there exists another vertex $x_3$ such that $\omega(x_1x_3) =\omega(x_2x_3)$ but different from $\omega(x_1x_2)$. 
\end{cla}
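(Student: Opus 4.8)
The plan is to prove the claim by exploiting the inductive set-up already in place: $K_{2n-1}$ is assumed to contain no $2$-divisible $1$-subdivision of $T$, and for every tree on fewer than $n$ vertices the inductive hypothesis provides a $2$-divisible $1$-subdivision inside the complete graph on $2|V|-1$ vertices. Fix an edge $x_1x_2$ and write $c=\omega(x_1x_2)$. First I would choose a leaf $\ell$ of $T$ with neighbour $p$, so that $T-\ell$ is a tree on $n-1\ge 2$ vertices. The key numerical observation is that deleting the two endpoints $x_1,x_2$ from $K_{2n-1}$ leaves exactly $K_{2n-3}$, and $2n-3=2(n-1)-1$; hence by the inductive hypothesis $K_{2n-1}-\{x_1,x_2\}$ already contains a $2$-divisible $1$-subdivision of $T-\ell$. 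Let $p$ also denote the vertex of this subdivision playing the role of the branch vertex $p$, and note $p\notin\{x_1,x_2\}$.

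Now the whole argument turns on a single dichotomy about the two remaining vertices $x_1,x_2$, which are precisely the vertices not used by the subdivision of $T-\ell$. To reinsert the leaf $\ell$ I only need a weight-$0$ path of length two from $p$ to a new branch vertex (the subdivision path of the edge $p\ell$), and the only vertices available are $x_1$ and $x_2$. The path $p\,x_1\,x_2$ has weight $\omega(px_1)+c$ and the path $p\,x_2\,x_1$ has weight $\omega(px_2)+c$. If either $\omega(px_1)=c$ or $\omega(px_2)=c$ (recall $-c=c$ in $\mathbb{Z}_2$), then one of these paths has weight $0$; using it as the subdivision path of $p\ell$, with the middle vertex as subdivision vertex and the far end as $\ell$, extends the $1$-subdivision of $T-\ell$ to a full $2$-divisible $1$-subdivision of $T$ inside $K_{2n-1}$, contradicting the standing assumption.

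Consequently the extension must fail for both choices, which forces $\omega(px_1)=\omega(px_2)=1-c\ne c$. But this says precisely that $x_3:=p$ satisfies $\omega(x_1x_3)=\omega(x_2x_3)\ne\omega(x_1x_2)$, which is the assertion of the claim. The argument is thus a short \emph{extend-or-extract} dichotomy rather than a computation.

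I expect the only genuinely delicate point to be the vertex bookkeeping: one must check that removing $x_1,x_2$ leaves exactly the number of vertices the inductive hypothesis requires, so that the subdivision of $T-\ell$ occupies all of $K_{2n-3}$ and $x_1,x_2$ are the only leftover vertices, and that the reinserted path of length two uses exactly those two leftover vertices and is vertex-disjoint from the rest. Everything else — the weight identity $2c=0$ in $\mathbb{Z}_2$ and the identification of $p$ as the desired witness — is immediate.
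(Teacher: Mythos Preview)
Your proof is correct and follows essentially the same argument as the paper: use the inductive hypothesis on $T$ minus a leaf inside $K_{2n-1}-\{x_1,x_2\}=K_{2(n-1)-1}$, then observe that the branch vertex $p$ either allows an extension through a weight-zero length-two path using $x_1,x_2$ (contradicting the standing assumption) or itself witnesses the claim. The paper phrases this as a proof by contradiction (assuming no witness exists and producing the extension), while you phrase it as a direct dichotomy, but the content is identical.
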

\begin{proof}
Otherwise, suppose the edge $x_1x_2$ does not apply to the Claim. According to the induction hypothesis, $K_{2n-1} - \{x_1, x_2\}$ contains a 2-divisible 1-subdivision $\sub_1(T - v)$ of $T - v$, where $v$ is a leaf in $T$. Let $u$ be the neighbor of $v$ in $T$.
By assumption, either $ux_1$ or $ux_2$ must have the same weight as $x_1x_2$. By adding this edge and $x_1x_2$ to $\sub_1(T - v)$, we obtain a 2-divisible 1-subdivision of $T$, a contradiction. 
\end{proof}

Note that in any tree, there is either a single leaf attached to a degree-two vertex or two leaves attached to a common vertex. We distinguish the proof according to the two structures.

\begin{figure}[htbp]
    \centering
    \subfloat[Case 1.]{\begin{tikzpicture}\label{fig:dtu}
    \begin{scope}[every node/.style={circle, fill, draw,inner sep = 1.5pt}]
        \path (0,0) node(a)  {} 
          (2,0) node(b) {}
          (1,1.5) node(c) {}
          (3,1.5) node(d) {};
    \end{scope}
    \begin{scope}
    \draw (node cs:name=a)node [below] {$x_1$} -- (node cs:name=b) node[midway,below] {$0$} node[below] {$x_2$} -- (node cs:name=c) node[above right,midway] {$1$};
    \draw (node cs:name=c) node[above] {$x_3$}  -- (node cs:name=a)node[above left,midway] {$1$};
    \draw (node cs:name=d) node[above] {$x_4$} --  (node cs:name=b)node[below right,midway] {$0$};
    \draw (node cs:name = c) -- (node cs:name = d) node[above,midway] {$0$};
    \end{scope}
   \end{tikzpicture}}
    \hspace{1.2in} \subfloat[Case 2.]{\label{fig:fourvertexmat}
    \centering
    \begin{tikzpicture}
    \begin{scope}[every node/.style={circle, fill, draw,inner sep = 1.5pt}]
        \path (0,0) node(a)  {} 
          (1.5,0) node(b) {}
          (1.5,1.5) node(c) {}
          (0,1.5) node(d) {};
    \end{scope}
    \begin{scope}
    \draw (node cs:name=a)node [below] {$x_1$} -- (node cs:name=b) node[midway,above] {$1$} node[below] {$x_2$} -- (node cs:name=c) node[right,midway] {$0$};
    \draw (node cs:name=d) node[above] {$x_4$} --  (node cs:name=a)node[left,midway] {$0$};
    \draw (node cs:name = c) node[above] {$x_3$} -- (node cs:name = d) node[above,midway] {$1$};
    \end{scope}
   \end{tikzpicture}
}
\caption{The structure in the proof of Theorem \ref{ts} (1).}\label{tree:fourgraph}
\end{figure}
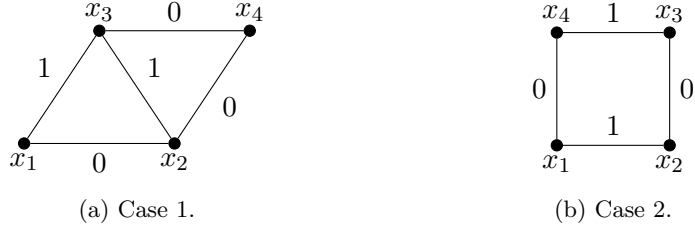

\noindent\textbf{Case 1.} There exists a leaf $v$ adjacent to a degree-two vertex $u$.  

Assume that $N_T(u)=\{v,w\}$. According to Claim \ref{twocolor}, there are four vertices that produce the graph depicted in Figure \ref{tree:fourgraph}(a) in $K_{2n - 1}$. By the induction hypothesis, there exists a 2-divisible $\sub_1(T-\{u,v\})$ in $K_{2n - 1} - \{x_1,x_2,x_3,x_4\}$. If $\omega(wx_1) = 0$, by adding the subdivision paths $x_2x_1w$ and $x_3x_4x_2$ to $\sub_1(T-\{u,v\})$, we obtain a 2-divisible 1-subdivision of $T$, a contradiction. If $\omega(wx_1) =1$, by adding the subdivision paths $x_3x_1w$ and $x_2x_4x_3$ to $\sub_1(T-\{u,v\})$, we again obtain a 2-divisible 1-subdivision of $T$, a contradiction.

\noindent\textbf{Case 2.} There exist two leaves $u,v$ attached to the common vertex $w$. 

By the induction hypothesis, there exists a 2-divisible 1-subdivision $\sub_1(T - \{u,v\})$.  
\begin{cla}\label{fourcycle}
 $K_{2n - 1}$ contain no set of four vertices $\{x_1, x_2, x_3, x_4\}$  that forms the subgraph shown in Figure~\ref{tree:fourgraph} (b). 
\end{cla}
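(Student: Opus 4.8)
The plan is to prove the claim by contradiction, staying inside our standing assumption that this $\mathbb{Z}_2$-edge-weighted $K_{2n-1}$ contains no $2$-divisible $1$-subdivision of $T$. Suppose four vertices $\{x_1,x_2,x_3,x_4\}$ did form the configuration of Figure~\ref{tree:fourgraph}(b), i.e.\ $\omega(x_1x_2)=\omega(x_3x_4)=1$ and $\omega(x_2x_3)=\omega(x_4x_1)=0$. I would reserve these four vertices; since $T-\{u,v\}$ is a tree on $n-2$ vertices and $2n-1-4=2(n-2)-1$, the complete graph on the remaining vertices is a $\mathbb{Z}_2$-edge-weighted $K_{2(n-2)-1}$, so the induction hypothesis embeds a $2$-divisible $1$-subdivision $\sub_1(T-\{u,v\})$ in it. Let $w$ be the branch vertex corresponding to the common neighbor of the leaves $u,v$. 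It then remains to attach $u$ and $v$ to $w$ by two internally disjoint subdivision paths of length $2$ and even weight; as these must avoid the embedded subdivision, they use exactly the four reserved vertices, two of them as the new branch vertices playing $u,v$ and the other two as subdivision vertices.

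The crux is to show this attachment is always achievable. The cycle has precisely two perfect matchings, the weight-$1$ matching $M_1=\{x_1x_2,x_3x_4\}$ and the weight-$0$ matching $M_0=\{x_2x_3,x_4x_1\}$, and I would realize each leaf by routing through one edge of a single fixed matching. If an edge $ab$ of weight $\varepsilon$ is used with its $w$-adjacent endpoint $a$ serving as the subdivision vertex, then the $2$-path $wab$ has weight $\omega(wa)+\varepsilon$, which is even exactly when $a$ is an $\varepsilon$-neighbor of $w$. Writing $w_i=\omega(wx_i)$, the matching $M_0$ succeeds iff at least one of $\{x_2,x_3\}$ and at least one of $\{x_4,x_1\}$ is a $0$-neighbor of $w$, while $M_1$ succeeds iff at least one of $\{x_1,x_2\}$ and at least one of $\{x_3,x_4\}$ is a $1$-neighbor of $w$.

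Finally I would verify that at least one matching always works. If $M_0$ fails, then either $w_2=w_3=1$ or $w_1=w_4=1$; in the first case $x_2$ and $x_3$ witness that $\{x_1,x_2\}$ and $\{x_3,x_4\}$ each contain a $1$-neighbor of $w$, and in the second case $x_1$ and $x_4$ do, so $M_1$ succeeds in both. Hence one of $M_0,M_1$ yields two internally disjoint even-weight $2$-paths from $w$ to two of the reserved vertices, and adjoining them to $\sub_1(T-\{u,v\})$ produces a $2$-divisible $1$-subdivision of $T$, contradicting our assumption; this proves the claim.

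I expect the only genuine content to lie in the complementary case analysis of the last paragraph, where the alternating weights $1,0,1,0$ are exactly what guarantee that the two matchings cover all weight patterns $(w_1,w_2,w_3,w_4)$. The reservation-and-re-embedding step is routine, and vertex-disjointness is automatic, since the two completed paths use precisely the four reserved vertices and meet only at $w$.
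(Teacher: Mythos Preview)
Your proposal is correct and follows essentially the same approach as the paper. Both reserve the four vertices, apply induction to embed $\sub_1(T-\{u,v\})$ in the remaining $K_{2(n-2)-1}$, and then extend at $w$ by two even-weight $2$-paths using the reserved vertices; your matching formulation $(M_0,M_1)$ is just a more symmetric packaging of the paper's case split, which instead assumes $\omega(wx_1)=0$ without loss of generality and then branches on $\omega(wx_2),\omega(wx_3)$.
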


\begin{proof}
    
 Suppose $K_{2n-1}$ contains a subgraph depicted in Figure~\ref{tree:fourgraph} (b). Assume, without loss of generality, that $\omega(wx_1) = 0$. If $\omega(wx_3) = 0$ or $\omega(wx_2) = 0$, then by adding paths $wx_1x_4$, and  $wx_3x_2$ or $wx_2x_3$ to $\sub_1(T-\{u,v\})$, we obtain a 2-divisible 1-subdivision of $T$, a contradiction. Hence $\omega(wx_3) = \omega(wx_2) = 1$. Then by adding the paths $wx_3x_4$ and $wx_2x_1$ to $\sub_1(T-\{u,v\})$, we obtain a 2-divisible 1-subdivision of $T$, a contradiction again.

\end{proof}

\begin{cla}\label{CL:order}
There exists an ordering  $x_1,x_2,\dotsc, x_{2n - 1}$ of  the  vertices in $K_{2n - 1}$ such that for all $1\le i < j\le 2n - 1$,
    \[
    \omega(x_ix_j) = \begin{cases}
        0, &\text{if $j$ is odd};\\
        1, &\text{if $j$ is even}.
    \end{cases}
    \]
\end{cla}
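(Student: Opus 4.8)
\emph{Approach.} I would prove the claim by building the ordering greedily, one vertex at a time, while maintaining the invariant that the prefix $x_1,\dots,x_k$ already satisfies the required pattern: for all $i<j\le k$ we have $\omega(x_ix_j)=0$ when $j$ is odd and $\omega(x_ix_j)=1$ when $j$ is even. Since the weighting $\omega$ on $K_{2n-1}$ is not monochromatic, some edge has weight $1$; taking its two endpoints as $x_1,x_2$ establishes the invariant for $k=2$ (as $2$ is even). The whole task then reduces to the extension step: given a good prefix $x_1,\dots,x_k$, produce a not-yet-used vertex $x_{k+1}$ that is joined to \emph{every} $x_i$ $(i\le k)$ by one and the same weight $c$, where $c=0$ if $k+1$ is odd and $c=1$ if $k+1$ is even.

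For the extension I would look at the single edge $x_{k-1}x_k$ of the prefix. Its weight equals $1-c$ (it is $1$ exactly when $k$ is even, and $c$ is the opposite colour), so applying Claim~\ref{twocolor} to this edge yields a vertex $z$ with $\omega(x_{k-1}z)=\omega(x_kz)=c$. This controls only the two edges from $z$ to $x_{k-1}$ and $x_k$, and the heart of the proof is upgrading it to $\omega(x_iz)=c$ for every $i\le k-2$ as well. Here Claim~\ref{fourcycle} does the work: if some $i\le k-2$ had $\omega(x_iz)=1-c$, then the four vertices $x_i,z,x_k,x_{k-1}$, taken in this cyclic order, would carry the weights $\omega(x_iz)=1-c$, $\omega(zx_k)=c$, $\omega(x_kx_{k-1})=[\,k\text{ even}\,]=1-c$, and $\omega(x_{k-1}x_i)=[\,(k-1)\text{ even}\,]=c$ (the last two because $i\le k-2<k-1$). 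These four weights alternate between $c$ and $1-c$, i.e. they read $1,0,1,0$ up to rotation, so $x_i,z,x_k,x_{k-1}$ would form the forbidden subgraph of Figure~\ref{tree:fourgraph}(b). This contradiction forces $\omega(x_iz)=c$ for all $i\le k$, so $z$ may be taken as $x_{k+1}$.

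Two routine points complete the construction. First, $z$ is a genuinely new vertex: it differs from $x_{k-1},x_k$ by the statement of Claim~\ref{twocolor}, and it cannot equal an earlier $x_i$ with $i\le k-2$, since such an $x_i$ satisfies $\omega(x_ix_k)=[\,k\text{ even}\,]=1-c$ while we have just shown $\omega(zx_k)=c$. Second, since every extension consumes a fresh vertex, iterating from $k=2$ up to $k=2n-2$ orders all $2n-1$ vertices and produces the ordering asserted by the claim.

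\emph{Main obstacle.} The delicate step is the propagation: Claim~\ref{twocolor} alone pins down only the two edges $zx_{k-1}$ and $zx_k$, and without forbidding alternating $4$-cycles (Claim~\ref{fourcycle}) the cherry vertex $z$ need not be monochromatically joined to the whole prefix. The argument hinges on choosing the one prefix-edge $x_{k-1}x_k$ that simultaneously (i) has the correct weight $1-c$ to feed Claim~\ref{twocolor} and (ii) closes up with $x_i$ and $z$ into a forbidden alternating cycle; the only bookkeeping is to check, in both colour cases, that exactly one of $k,k-1$ is even so that the four cycle-weights alternate. (I would also remark, once the ordering is in hand, that $x_{2n-1}$ is then incident only to weight-$0$ edges, which is precisely what will contradict Claim~\ref{twocolor} and close Case~2.)
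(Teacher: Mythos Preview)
Your proposal is correct and follows essentially the same approach as the paper: greedily extend a good prefix by applying Claim~\ref{twocolor} to the last edge $x_{k-1}x_k$ to obtain the candidate $z$, then propagate to the whole prefix via Claim~\ref{fourcycle}. The only cosmetic difference is that the paper seeds the induction at $t=3$ (obtaining $x_1,x_2,x_3$ directly from Claim~\ref{twocolor}) rather than at $k=2$; your explicit verification of the alternating $4$-cycle weights and of the novelty of $z$ matches the paper's argument.
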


\begin{proof}
One can find $x_1$, $x_2$, and $x_3$ using Claim \ref{twocolor}. Assume that $x_1,\dotsc,x_t(t\ge 3)$ have been ordered. We proceed to  find $x_{t + 1}$.  Without loss of generality, assume $t$ is even, then $\omega(x_ix_t) = 1$ for all $1\le i < t$.  By applying Claim~\ref{twocolor} to the edge $x_{t-1}x_t$, there must exist another vertex $y$ satisfying  $\omega(x_ty) = \omega(x_{t - 1}y) = 0$.
Moreover, $\omega(x_ty)= 0$ implies that $y\notin\{x_1, x_2, \dots, x_{t-2}\}$. 
 By applying Claim \ref{fourcycle} to the 4-cycle $yx_tx_{t - 1}x_{i}y$ with $i< t - 1$, we must have $\omega(x_iy) = 0$. 
Therefore, $y$ can be arranged to $x_{t + 1}$.   
\end{proof}

Based on Claim~\ref{CL:order}, all edges incident to $x_{2n - 1}$ have color $0$, which contradicts with Claim~\ref{twocolor}.

\subsection{Proof of Theorem~\ref{ts} (2)}

We prove a stronger result. 

\begin{thm}\label{thm:cycle}
    Let $n$ be an integer with $n\ge 3$. Every red-blue edge-coloring of $K_{2n}$ contains a Hamiltonian cycle consisting of a red path and a blue path,  both of even length.  
\end{thm}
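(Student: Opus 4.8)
The plan is to first produce a Hamiltonian cycle of $K_{2n}$ with at most two colour changes --- that is, one consisting of a single red arc and a single blue arc --- and then to correct the parities of the two arcs so that both become even.

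For the first step I would invoke the classical fact that every $2$-edge-coloured complete graph admits a Hamiltonian path that is the concatenation of a red subpath and a blue subpath (equivalently, a Hamiltonian path with at most one colour change); this is proved by a short induction that inserts a new vertex either at the front (if its edge to the first vertex is red), at the back (if its edge to the last vertex is blue), or at the unique place along the path where the new vertex switches from blue to red. Applying this to $K_{2n}$ gives a Hamiltonian path $v_1 v_2 \cdots v_{2n}$ whose edges are red on $v_1 \cdots v_k$ and blue on $v_k \cdots v_{2n}$. Adding the closing edge $v_{2n} v_1$ yields a Hamiltonian cycle: if $v_{2n}v_1$ is red the red arc is $v_{2n} v_1 \cdots v_k$ of length $k$ and the blue arc has length $2n-k$; if it is blue the red arc $v_1 \cdots v_k$ has length $k-1$ and the blue arc length $2n-k+1$. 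Either way we obtain a Hamiltonian cycle with at most two colour changes, i.e.\ a red arc $R$ and a blue arc $B$ with $|R|+|B|=2n$.

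Since $2n$ is even, $|R|$ and $|B|$ have the same parity. If both are even we are done (this includes the monochromatic case, where one arc is empty and the other has even length $2n$). It remains to treat the case in which both arcs have odd length. The key parity-correcting move is to relocate a single vertex from one arc to the other: removing an interior vertex $z$ of the red arc (reconnecting its two red neighbours) and reinserting $z$ into a blue edge of the blue arc between two of its blue neighbours decreases $|R|$ by $1$ and increases $|B|$ by $1$, so both lengths flip parity and become even, while the cycle still has exactly two colour changes. Because $n \ge 3$ forces $|R|+|B| \ge 6$, at least one of the two odd arcs has length at least $3$ and hence possesses an interior vertex to relocate, and by symmetry we may assume it is $R$.

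The main obstacle is to guarantee that such a relocation can actually be carried out --- that is, that some interior red vertex $z$ has a red \emph{bypass} edge (so that deleting it keeps $R$ red) and simultaneously has two consecutive blue neighbours on $B$ (so that it can be reinserted). I would handle this by a case analysis on the colours of the edges between the two arcs: the many available choices of $z$ and of the insertion slot, together with the completeness of the host graph, make the move possible in all but highly structured colourings (roughly, \emph{near-bipartite} patterns in which red and blue neighbours strictly alternate along an arc), and these structured configurations --- along with the small degenerate cases where an arc has length $1$ --- are dispatched directly, using once more that $n \ge 3$ leaves enough room to reroute. This parity-correction step, rather than the existence of a two-colour-change cycle, is where essentially all the work lies.
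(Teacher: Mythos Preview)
Your approach differs substantially from the paper's. The paper opens with the Bessy--Thomass\'{e} partition theorem (every $2$-coloured complete graph on at least five vertices splits into two monochromatic Hamiltonian cycles of distinct colours, up to degenerate cases) and then splices the two cycles together, handling parity case by case. You instead start from the elementary one-colour-change Hamiltonian path, and your first step --- producing a Hamiltonian cycle consisting of a single red arc and a single blue arc --- is correct and more self-contained than the paper's opening.

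The genuine gap is the parity-correction step. Your one-vertex relocation is a natural move, but it is not always available, and your claim that failure forces a ``highly structured'' colouring is unsupported. Take the extreme case $|R|=2n-1$, $|B|=1$: the blue arc is a single edge $uv$, the only insertion slot is between $u$ and $v$, and relocating an interior vertex $z$ of $R$ requires $zu$ and $zv$ both blue together with a red bypass $z^{-}z^{+}$. The two interior vertices of $R$ adjacent to $u$ or $v$ along $R$ are already excluded (one of $zu,zv$ lies on $R$ and is red), and for each remaining candidate the adversary need only colour \emph{one} of $zu,zv$ red to block it --- this costs a single edge per candidate and imposes no global pattern. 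More generally, whenever $|B|$ is small the insertion slots are few and the constraints on $z$ are many, so ``many available choices'' is precisely what you do not have. You acknowledge that this step is where the work lies, but ``these structured configurations\ldots are dispatched directly'' is a promise, not an argument; I do not see how to complete it without either a substantially richer family of rerouting moves or a mechanism for choosing a better-behaved starting cycle. The paper sidesteps this difficulty because the two-cycle partition already carries enough structure to control parity in each of its three cases.
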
 
Clearly, Theorem~\ref{ts} (2) is a direct corollary of Theorem~\ref{thm:cycle}.
The following theorem is useful for our proof. 

\begin{thm}[\cite{BT}]\label{partition}
         Every complete graph $K$ of order $n\ge 5$,  whose edges are colored with red and blue, admits a vertex set partition $ V_1\cup V_2$ with $|V_1|\ge |V_2|$ such that one of the following holds:

      (1) $|V_1|\ge |V_2|\ge 3$, and each of $K[V_1]$ and $K[V_2]$ contains a monochromatic Hamiltonian cycle,  with the two cycles being of different colors.

      (2) $|V_2| = 2$, and $K[V_1]$ has a monochromatic Hamiltonian cycle whose color is different from that of the edge in $K[V_2]$.

      (3) $K$ itself has a monochromatic cycle of length $n$ or $n - 1$.
\end{thm}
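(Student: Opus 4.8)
The plan is to apply the partition theorem (Theorem~\ref{partition}) to $K=K_{2n}$, which is legitimate since $2n\ge 6\ge 5$, and to convert each of its three outcomes into a Hamiltonian cycle whose cyclic word of edge-colours is $R^{2a}B^{2b}$ with $a+b=n$; I will allow a monochromatic arc of length $0$, which is exactly what a (near-)monochromatic host forces and what the corollary $s_2(C,1)=2n$ needs. The central device is a \emph{cut-and-reconnect gadget}. In outcomes (1) and (2) we are handed a red spanning cycle $C_R$ on a set $A$ and a blue spanning cycle (or, in (2), a single blue edge) $C_B$ on the complementary set $B$, with $|A|+|B|=2n$. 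Deleting one edge of each turns them into a red spanning path of $A$ and a blue spanning path of $B$; joining the four endpoints by two crossing $A$--$B$ edges $c_1,c_2$ yields a Hamiltonian cycle with colour word $R^{|A|-1}c_1B^{|B|-1}c_2$. A direct check shows this word collapses to $R^{|A|}B^{|B|}$ when $\{c_1,c_2\}$ is bichromatic, and to $R^{|A|+1}B^{|B|-1}$ (or $R^{|A|-1}B^{|B|+1}$) when $c_1=c_2$.

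Since $|A|+|B|=2n$ is even, $|A|\equiv|B|\pmod 2$, and the parity principle is: if both are even we want $c_1\ne c_2$ (giving the two even arcs $R^{|A|}B^{|B|}$), while if both are odd we want $c_1=c_2$ (giving $R^{|A|+1}B^{|B|-1}$, again two even arcs). So everything reduces to steering the colours of the two reconnecting crossing edges, using the freedom to delete \emph{any} edge of $C_R$ and \emph{any} edge of $C_B$ and to choose either of the two matchings of the resulting endpoints. Classifying each $a\in A$ by the pair of colours $\bigl(c(ab),c(ab')\bigr)$ to the endpoints of a fixed blue edge $bb'\in C_B$ (with $c$ the colouring), a short case-check shows that a matching of the required type is missing only when these vertex-types are forced into a rigid pattern around the connected cycles $C_R$ and $C_B$: a strictly alternating pattern in the odd-parity case, or a constant pattern (all crossing edges one colour) in the even-parity case. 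An odd cycle has no proper $2$-colouring, so for $|A|,|B|$ odd the gadget always produces the cycle; for $|A|,|B|$ even the sole obstruction is a monochromatic set of crossing edges.

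It remains to dispatch the degenerate configurations. If every $A$--$B$ edge is red (with $|A|,|B|$ even), I absorb $B$ into $C_R$ by inserting its vertices one at a time along distinct red edges, obtaining a red Hamiltonian cycle of $K_{2n}$, i.e.\ an even red arc with empty blue arc. If every crossing edge is blue, I weave instead: fixing $a^\ast\in A$ with $C_R$-neighbours $a^-,a^+$ and a vertex $b^\ast\in B$ with $C_B$-neighbours $b^-,b^+$, I route a blue arc through $\{a^\ast\}\cup B$ using only blue crossing edges together with the blue path $C_B-b^\ast$, producing a blue arc of even length $|B|+2$, while the red arc $C_R-a^\ast$ has even length $|A|-2$; outcome (2) with all crossings blue is the special instance giving a blue arc of length $4$. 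Finally, outcome (3) supplies a monochromatic (say red) cycle of length $2n$ or $2n-1$: the length-$2n$ cycle is itself the required cycle (even red arc, empty blue arc), and for length $2n-1$ I insert the missing vertex $w$. Because $2n-1$ is odd, the colours from $w$ to the cyclically ordered cycle-vertices cannot strictly alternate, so $w$ has two consecutive same-coloured attachments, giving either a red Hamiltonian cycle or an inserted blue arc of length $2$ with the complementary even red path.

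The main obstacle is precisely this colour-control of the two reconnecting edges: their colours are set by the adversary, and the clean pattern $R^{|A|}B^{|B|}$ (or its odd-parity variant) only materialises for the right deletions and matching. The crux is the type-classification argument isolating the bad case as a proper two-colouring of $C_R$ or $C_B$ by vertex-types, which the oddness of the parts rules out and which in the even case is exactly the monochromatic-crossing situation handled by the explicit absorbing and weaving constructions. The remaining delicacy is the bookkeeping in those constructions---in particular the repeated use of the elementary fact that deleting an edge (respectively a vertex) from a monochromatic Hamiltonian cycle leaves a monochromatic Hamiltonian path between its endpoints (respectively its two neighbours), which is what makes the even-length arcs fit together---but this is routine.
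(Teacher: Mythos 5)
Your proposal does not prove the statement at hand; it proves a different theorem while assuming the statement. The statement is Theorem~\ref{partition}, the partition theorem of Bessy and Thomass\'e (their proof of Lehel's conjecture), which the paper does not prove at all but imports from \cite{BT}. Any genuine proof of it must start from an arbitrary red-blue colouring of $K$ and produce the partition $V_1\cup V_2$ with the two monochromatic Hamiltonian cycles of different colours (or the degenerate outcomes (2), (3)). Your very first sentence, however, is ``apply the partition theorem (Theorem~\ref{partition}) to $K=K_{2n}$'', and everything that follows --- the cut-and-reconnect gadget, the parity analysis of the two crossing edges, the absorbing and weaving constructions for monochromatic crossings, the insertion of the missing vertex in outcome (3) --- is devoted to converting the three outcomes of Theorem~\ref{partition} into a Hamiltonian cycle that splits into a red path and a blue path of even length. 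That is an argument for Theorem~\ref{thm:cycle} (equivalently Theorem~\ref{ts}(2)), not for Theorem~\ref{partition}; read as a proof of Theorem~\ref{partition} it is circular, since it invokes the theorem as its main hypothesis.

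The gap is therefore not a fixable local step but the absence of the entire content: nothing in your argument addresses how to find, in an arbitrary colouring, the partition into a red Hamiltonian-cycle part and a blue Hamiltonian-cycle part. This is a deep result (earlier asymptotic versions used the regularity lemma, and the exact statement was a long-standing conjecture), and no short gadget argument of the kind you describe is known to yield it. For what it is worth, viewed as a proof of Theorem~\ref{thm:cycle} your proposal follows the same skeleton as the paper's own proof of that theorem --- apply Theorem~\ref{partition}, then a case analysis over its three outcomes with parity bookkeeping --- differing only in the local surgery; but that theorem is not the statement you were asked to prove.
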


\begin{proof}[Proof of Theorem \ref{thm:cycle}]
Suppose, for contradiction, that there exists a red-blue edge-coloring $\omega: E(K_{2n}) \to \{r, b\}$ of $K_{2n}$ that admits no  Hamiltonian cycle of the desired type. By Theorem \ref{partition}, we have a partition of $V(K_{2n})$ into $V_1\cup V_2$ such that (1), (2), or (3) holds

    \vspace{5pt}
    \noindent\textbf{Case 1}.  The (3) holds, i.e., $K_{2n}$ contain a monochromatic cycle of length $2n$ or $2n - 1$.
    
    When the length of the monochromatic cycle is $2n$, we have a desired Hamilton cycle, a contradiction.  Now suppose the monochromatic cycle is $ x_1x_2\dotsc x_{2n - 1}x_1$, and the remaining vertex is $y$. Since the length of the monochromatic cycle is odd, there must exist an $i$ such that $x_iy$ and $x_{i + 1}y$ have the same color. 
    Then we obtain a Hamiltonian cycle consisting of monochromatic paths  $x_{i + 1}x_{i  + 2}\dots x_{2n-1}x_{1}x_2\dots x_{i}$ and  $x_iyx_{i + 1}$, both of length even, a contradiction again.

  \vspace{5pt}  
    \noindent\textbf{Case 2}. The (2) holds, i.e., $|V_2| = 2$, and $K_{2n}[V_1]$ has a monochromatic Hamiltonian cycle whose color is different from that of the edge in $K[V_2]$.

    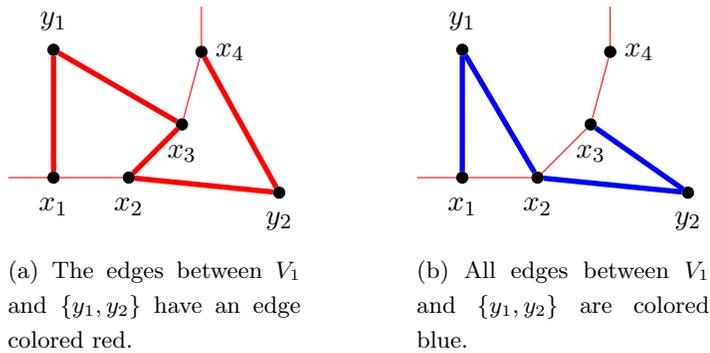
\begin{figure}[htbp!]
        \centering
        \subfloat[The edges between $V_1$ and $\{y_1,y_2\}$ have an edge colored red.]{\begin{tikzpicture}
            \begin{scope}[every node/.style={circle, fill, draw,inner sep = 1.5pt}]
            \path (0,0) node[label = below:$x_1$] (a) {} -- (1,0) node[label = below:$x_{2}$] (b) {} -- ++(45:1) node[label = below:$x_{3}$] (c) {} -- ++(75:1) node[label = right:$x_{4}$] (d) {};
            \path (0,1.7) node[label = above: $y_1$] (e){};
            \path (3,-0.2) node[label = below:$y_2$] (f){};
            \end{scope}
            \begin{scope}[red] 
            \draw (-0.6,0) --  (a) -- (b) -- (c) -- (d) -- +(90:0.6);
            \begin{scope}[line width = 2pt]
            \draw (b) -- (c);
            \draw (e) -- (a);
            \draw (e) -- (c);
            \draw (f) -- (b);
            \draw (f) -- (d);
             \end{scope}
            \end{scope}
        \end{tikzpicture}}
        \qquad \qquad
        \subfloat[All edges between $V_1$ and $\{y_1,y_2\}$ are  colored blue.]{\begin{tikzpicture}
            \begin{scope}[every node/.style={circle, fill, draw,inner sep = 1.5pt}]
            \path (0,0) node[label = below:$x_1$] (a) {} -- (1,0) node[label = below:$x_{2}$] (b) {} -- ++(45:1) node[label = below:$x_{3}$] (c) {} -- ++(75:1) node[label = right:$x_{4}$] (d) {};
            \path (0,1.7) node[label = above: $y_1$] (e){};
            \path (3,-0.2) node[label = below:$y_2$] (f){};
            \end{scope}
            \begin{scope}[red] 
            \draw (-0.6,0) --  (a) -- (b) -- (c) -- (d) -- +(90:0.6);
            \begin{scope}[blue,line width = 2pt]
            \draw (e) -- (a);
            \draw (e) -- (b);
            \draw (f) -- (b);
            \draw (f) -- (c);
             \end{scope}
            \end{scope}
        \end{tikzpicture}}
        \caption{The structure of Case 2.}
    \end{figure}

   Let $C_r = x_1x_2\dotsm x_{2n - 2}x_1$ be a Hamiltonian cycle in $K_{2n}[V_1]$, which is colored  red. Let $V_2 = \{y_1,y_2\}$ with $y_1y_2$ colored blue. Then $|C_r| \ge 4$ since $n \ge 3$. If there exists an edge $y_jx_i$ that is colored red, say $y_1x_1$, then $y_2x_2$ is colored red; otherwise, $y_1y_2x_2$ and $x_2x_3\dotsm x_{2n - 2}x_1y_1$ form a desired Hamiltonian cycle, a contradiction. 
   Similarly, $y_1x_3$ and $y_2x_4$ are colored red. Therefore, $x_1y_1x_3x_2y_2x_4x_5\dotsm x_{2n - 2}x_1$ is a monochromatic Hamiltonian cycle, a contradiction too. Thus, all edges between $C_r$ and $y_1,y_2$ are colored blue. Then $x_1y_1x_2y_2x_3$ and $x_3 x_4\dotsm x_{2n - 2}x_1$ once more form a desired Hamiltonian cycle, a contradiction. 
    
\vspace{5pt}
\noindent\textbf{Case 3}. The (1) holds, i.e.,  $|V_1|\ge |V_2|\ge 3$, and each of $K_{2n}[V_1]$ and $K_{2n}[V_2]$ contains a monochromatic Hamiltonian cycle,  with the two cycles being of different colors. 

Let $C_r$ (red) and $C_b$ (blue) be two monochromatic Hamiltonian cycles in $K_{2n}[V_1]$ and $K_{2n}[V_2]$,  respectively. 
    
If  $|C_b|\ge 3$ is odd, then $|C_r|$ is odd too. Let $C_r = x_1x_2\dots x_{2s - 1}$ and $C_b = y_1y_2\dots y_{2t - 1}$. If   there exist $i,j$ with $1\le i\le 2s-1$  and $1\le j\le 2t-1$ such that the edges $x_iy_{j}$ and $x_{i + 1}y_{j + 1}$ have the  same color, say color $r$, then $(C_r-\{x_ix_{i+1}\})\cup\{x_iy_j, x_{i+1}y_{j+1}\}$ and $C_b-\{y_{j}y_{j+1}\}$ form a desired Hamiltonian cycle, a contradiction. Thus $x_iy_j$ and $x_{i + 1}y_{j + 1}$ have different colors for all $1\le i\le 2s - 1$ and $1\le j\le 2t - 1$.  Assume $x_1y_1$  is colored  $r$. 
Then $x_{1+i}y_{1+i}$ is colored  $r$ if $i$ is even and  $b$ if $i$ is odd, where  the indices of $x$ and $y$ are taken  modulo $2s-1$ and $2t-1$, respectively.  Consequently, $x_{1 + (2s - 1)(2t - 1)}y_{1 + (2s - 1)(2t - 1)}=x_1y_1$ is colored $b$, leading to a contradiction.

Now assume $|C_b|\ge 4$ is even. Let $C_r = x_1x_2\dots x_{2s}$ and $C_b = y_1y_2\dots y_{2t}$. By assumption, $s\ge t$. 
If all edges between $C_r$ and $C_b$ are colored $b$, then $x_1y_1x_2y_2\dotsm x_{2t}y_{2t}x_{2t + 1}$ and $x_{2t + 1}x_{2t + 2}\dotsm x_{2s}x_1$ constitute  a desired cycle ($x_{2t + 1}$ may be $x_1$), a contradiction. 
Thus, there exists an edge  $x_{k}y_{l}$ is colored $r$ for some $1\le k\le 2s$ and $1\le l\le 2t$. We assert that for every $x\in\{x_{k - 1},x_{k + 1}\}$ and $y\in\{y_{l - 1},y_{l + 1}\}$, $xy$ are colored $r$; otherwise, $(C_r\cup C_b - \{xx_k,yy_l\})\cup \{xy,x_ky_l\}$ constitute a desired cycle, a contradiction. Without loss of generality, assume $x_1y_1$ is colored $r$, then $x_2y_2$ is colored $r$. Consequently, $x_1y_3$ is colored $r$. This leads to $x_2y_4$ and $x_1y_5$ being colored $r$. Continue this process, we obtain that $x_1y_j$ is colored $r$ for odd $j$. From $x_2y_2$ is colored $r$, with a similar discussion, we can obtain that $x_2y_j$ is colored $r$ for even $j$. Continue the above process, we can conclude that $x_iy_j$ is colored $r$ for $i\equiv j \pmod{2}$. Therefore, \[x_1y_1x_3y_3\dotsm x_{2t - 3}y_{2t - 3}x_{2t - 1}x_{2t - 2}y_{2t - 2}x_{2}y_{2}\dotsm x_{2t - 4}y_{2t - 4}x_{2t}x_{2t + 1}\dotsm x_{2s}x_1\] is a monochromatic cycle (red) of length $2n-2$. The rest edge $y_{2t - 1}y_{2t}$ has color $b$, we are back in \textbf{Case}   2.

 The proof is completed.
\end{proof}

\noindent \textbf{Remark.} The lower bound $n\ge 3$ is tight. For example, the red-blue colored  $K_4$ as shown in Figure~\ref{Fig:K4} does not have a Hamiltonian cycle consisting of a red path and a blue path, both of even length. 
\begin{figure}[htbp!]
    \centering
    \begin{tikzpicture}
    \begin{scope}[every node/.style={circle, fill, draw,inner sep = 1.5pt}]
        \path (0,0) node (a) {}-- (1.5,0) node (b){} -- (1.5,1.5) node (c) {}-- (0,1.5) node (d) {};
    \end{scope}
             \draw[red] (a) -- (b) -- (c) -- (d);
             \draw[blue] (b) -- (d) -- (a) -- (c);
    \end{tikzpicture}
    \captionsetup{justification = centering}
    \caption{A red-blue colored $K_4$ does not have a $C_4$ that consists of a \\ red path and a blue path, each of length 2 or monochromatic.}\label{Fig:K4}
\end{figure}
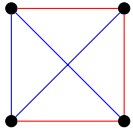

\section{Concluding and  Remarks}

In this paper, we investigate the issue of finding a specific subdivision with prescribed modular constraints on subdivision path lengths of a given graph among the subdivisions of $K_f$. Alon \cite{NogaAlon} proved that the Ramsey number of every subdivided graph with order $n$ is at most $12n$. Based on Alon's result,  for an integer $q\ge 2$ and a given graph $H$ with $n$ vertices and $m$ edges, we have  \[m(q - 1) + n \le s_q(H)\le s_q(H,q-1)\le 12(m(q - 1) + n).\]
When $H$ is a subcubic graph on $n$ vertices with $m$ edges. Das, Dragani\'{c} and Steiner \cite{SNR} proved that for any $f \ge 7qm + 8n +14q$, every $\mathbb{Z}_q$-edge-weighted $K_f$-minor contains a $q$-divisible $H$-subdivision. 
Based on this result, we also have $s_q(H) \le 7qm + 8n + 14q$. In this paper, we demonstrate that the term "subcubic" in this problem can actually be omitted under the subdivision condition. For any graph $H$ with $n$ vertices and $m$ edges, we prove that, for a prime $p\ge 3$ and a connected graph $H$, we have $$s_p(H) \le \frac{3p - 1}{2}m - \frac{p - 1}{2}n + \frac{p + 1}{2}.$$ For general $q$ and $H$, we have $$s_q(H)\le (2q - 1)m + 2n  - 1 + 4q.$$ 
Furthermore, we have also established several cases where the bounds are tight. For general $q$ and a tree $T$ with $n$ vertices, $$s_q(T) = nq - q + 1.$$
For $q = 2$, we complete solve Problem~\ref{Prob:DDS} by showing that $$s_2(H) = m + n$$ for every $5$-degenerate graph $H$ with $n$ vertices and $m$ edges. 
Moreover, we prove that $s_2(T,1) = s_2(C,1) = m + n$ for every tree $T$ and cycle $C$ on $n$ vertices with $m$ edges. 
Based on these results, it is natural to ask the following question.

\begin{prob}\label{PB: generalH}
    1. For every graph $H$ and any integer $q\ge 2$,  $s_q(H) = m(q  - 1) + n$.

    2. Whether $s_2(H,1) = m + n$ for every graph $H$? 
\end{prob}

Let $H$ be a connected graph. If we select any $\lfloor\frac{q - 1}{2}\rfloor$ vertices in $K_f$ and assign the value 1 to all edges incident to these vertices, and assign the value 0 to all the remaining edges, it is straightforward to verify that this configuration does not contain any $q$-divisible $(q - 1)$-subdivision of $H$ when $f = m(q - 1) + n + \lfloor\frac{q - 1}{2}\rfloor - 1$. Thus, $s_q(H,q - 1)\ge m(q - 1) + n  + \lfloor\frac{q - 1}{2}\rfloor$.  Currently, it appears that determining the exact value of $s_q(H, q - 1)$ for $q \ge 3$ remains quite challenging. Determining its range is likely to be an interesting problem.

When considering a family of graphs, for example, cycles of lengths divisible by $q$,  Alon and Krivelevich \cite{AlonK} proved that for some constant $C$, every $K_f$-minor with $f\ge C\cdot q\log q$ contains a cycle of length divisible by $q$. Later,  M\'{e}sz\'{a}ros and Steiner \cite{Meszaro} showed that $f\ge 16q$ is sufficient. Subsequently, the coefficient `16'  was improved to '4' by Berendsohn, Boyadzhiyska, and Kozma  \cite{Berendsohn} as well as by Akrami, Alon, Chaudhury, Garg,     Mehlhorn, and Mehta \cite{Akrami}. Recently, Campbell, Gollin, Hendrey, and Steiner \cite{Campbell} proved that every $K_{2q + 2}$-minor contains a cycle of length divisible by $q$ when $q$ is odd. For $q$ is even, they said there were additional obstacles to overcome and would be present in the following series of their articles. 
It is very interesting to investigate such a problem in  $K_f$-subdivisions(or the problem of finding a zero-weight cycle in  $\mathbb{Z}_q$-edge-weighted complete graphs).

\vspace{5pt}
\noindent{\bf Acknowledgements}:
This work was supported by the National Key Research and Development Program of China (2023YFA1010203), the National Natural Science Foundation of China (No.12471336), and the Innovation Program for Quantum Science and Technology (2021ZD0302902).


\begin{thebibliography}{99}
\bibitem{Akrami}  H. Akrami, N. Alon, B. R. Chaudhury, J. Garg, K. Mehlhorn, R. Mehta, EFX Allocations: Simplifications and Improvements, preprint, arXiv:2205.07638, 2022.
\bibitem{NogaAlon} N. Alon,  Subdivided graphs have linear ramsey numbers, J. Graph Theory 18 (1994) 343-347. 
\bibitem{AlonK} N. Alon, M. Krivelevich, Divisible subdivisions, J. Graph Theory 98 (2021) 623–629.
\bibitem{AlonKS} N. Alon, M. Krivelevich, B. Sudakov, Turán numbers of bipartite graphs and related Ramsey-type questions, Combin. Probab. Comput. 12 (2003) 477–494.
\bibitem{Berendsohn}   B. A. Berendsohn, S. Boyadzhiyska, L. Kozma, Fixed-point cycles and approximate EFX allocations, 
in: Proceedings of 47th International Symposium on Mathematical Foundations of Computer Science 
(MFCS 2022), LIPIcs, vol. 241, 2022, pp. 17:1–17:13.

\bibitem{BT} S. Bessy, S. Thomass\'{e}, Partitioning a graph into a cycle and an anticycle, a proof of Lehel’s conjecture, J. Combin. Theory, Ser. B 100(2) (2010), 176–180.


\bibitem{bollobas} B. Bollobás, A. Thomason, Proof of a conjecture of Mader, Erdős and Hajnal on topological complete
subgraphs, European J.
Combin. 19 (1998) 883–887.
\bibitem{Campbell} R. Campbell, J. P. Gollin, K. Hendrey, R. Steiner, Optimal bounds for zero-sum cycles. I. odd order, J. Combin. Theory, Ser. B 173 (2025) 246-256. 

\bibitem{SNR} S. Das, N. Dragani\'{c}, R. Steiner, Tight bounds for divisible subdivisions, J. Combin. Theory, Ser. B
165 (2024) 1–19.

 \bibitem{Daven} H. Davenport, On the addition of residue classes, J. London Math. Soc. 10 (1935) 30-32.

\bibitem{IJHOZ} I. G. Fern\'{a}ndez, J. Hyde, H. Liu, O. Pikhurko, Z. Wu, Disjoint isomorphic balanced clique subdivisions, J. Combin. Theory, Ser. B 161 (2023) 417-436. 

\bibitem{Frudi} Z. F\"{u}redi, D. J. Kleitman, On zero-trees, J. Graph Theory
 16(2) (1992) 107–120.

\bibitem{libinlong} E. Gy\H{o}ri, B. Li, N. Salia, C. Tompkins, K. Varga, M. Zhu, On graphs without cycles of length 0 modulo 4, J. Combin. Theory, Ser. B 176 (2026) 7-29.  

\bibitem{komlos} J. Koml\'{o}s, E. Szemer\'{e}di, Topological cliques in graphs II,  Combin. Probab. Comput. 5 (1996) 79–90.

\bibitem{LiuM} H. Liu, R. Montgomery, A solution to Erdős and Hajnal’s odd cycle problem, J. Amer. Math. Soc.
36 (2023) 1191–1234.
\bibitem{Meszaro}  T. M\'{e}sz\'{a}ros, R. Steiner, Zero sum cycles in complete digraphs, European J.
Combin. 98 (2021) 103399.
 \bibitem{Thomassen} C. Thomassen, Graph decomposition with applications to subdivisions and path systems modulo 
$k$, J. Graph Theory 7 (1983) 261–271.
\bibitem{Thomassen1}  C. Thomassen, Subdivisions of graphs with large minimum degree, J. Graph Theory
 8(1) (1984) 23-28.




\end{thebibliography}
\end{document}